\newtheorem{theorem}{Theorem}[section]
\newtheorem{lemma}[theorem]{Lemma}
\newtheorem{prop}[theorem]{Proposition}
\newtheorem{cor}[theorem]{Corollary}
\def \R{\mathbb{R}}
\def \Rn{\mathbb{R}^n}
\def \Rno{\mathbb{R}^n_0}
\def \RnI{\mathbb{R}^{n+1}}
\def \RnIp{\mathbb{R}^{n+1}_{+}}
\def \a{\alpha}
\def \b{\beta}
\def \d{\delta}
\def \g{\gamma}
\def \e{\varepsilon}
\def \ph{\varphi}
\def \s{\sigma}
\def \L{\mathcal{L}}
\def \z{\zeta}
\def \Om{\Omega}
\def \Lap{\triangle}
\def \grad{\nabla}
\def \S{\mathcal{S}}
\def \Czinf{C^{\infty}_0}
\def \Omt{\tilde{\Omega}}
\def \half{\frac{1}{2}}
\def \Lph{\mathcal{L}_{\ph}}
\def \Lphaq{\mathcal{L}_{A,q,\ph}}
\def \Lphaqe{\mathcal{L}_{A,q,\ph_c}}
\def \Laq{\mathcal{L}_{A,q}}
\def \Lphe{\mathcal{L}_{\ph_c}}
\def \Lphet{\tilde{\mathcal{L}}_{\ph_c}}
\numberwithin{equation}{section}
\begin{document}


\title[Partial Data Magnetic Neumann-Dirichlet]{Partial Data for the Neumann-Dirichlet Magnetic Schr\"{o}dinger Inverse Problem}

\author[Chung]{Francis J. Chung}
\address{Department of Mathematics, University of Michigan, Ann Arbor, MI}

\subjclass[2000]{Primary 35R30}

\keywords{Neumann-Dirichlet map, Calder\'{o}n problem, magnetic Schr\"{o}dinger equation, partial data, Inverse problems, Carleman estimates}

\begin{abstract}
We show that an electric potential and magnetic field can be uniquely determined by partial boundary measurements of the Neumann-to-Dirichlet map of the associated magnetic Schr\"{o}dinger operator.  This improves upon the results in ~\cite{Ch2} by including the determination of a magnetic field.  The main technical advance is an improvement on the Carleman estimate in ~\cite{Ch2}.  This allows the construction of complex geometrical optics solutions with greater regularity, which are needed to deal with the first order term in the operator.  This improved regularity of CGO solutions may have applications in the study of inverse problems in systems of equations with partial boundary data.    

\end{abstract}

\maketitle

\section{Introduction}

Let $\Om$ be a smooth simply-connected bounded domain in $\R^{n+1}$, where $n+1 \geq 3$.  Let $A$ be a $C^2$ vector field on $\Om$, and $q$ be an $L^{\infty}$ function on $\Om$.  Then define the magnetic Schr\"{o}dinger operator $\Laq$ with magnetic potential $A$ and electric potential $q$ by
\begin{equation}\label{Laq}
\Laq = (D+A)^2 + q
\end{equation}
where $D = -i \grad$.  Let $\nu(p)$ be the outward unit normal at each point $p \in \partial \Om$, and assume $A$ and $q$ are such that the Neumann boundary value problem
\begin{equation*}
\begin{split}
\Laq u &= 0 \mbox{ in } \Om \\
\nu \cdot (\grad + iA) u|_{\partial \Om} &= g \\
\end{split}
\end{equation*}
has unique solutions $u \in H^1(\Om)$ for each $g \in H^{-\half}(\partial \Om)$.  Then $A$ and $q$ define a Neumann-to-Dirichlet map $N_{A,q}: H^{-\half}(\Om) \rightarrow H^{\half}(\Om)$ by 
\[
N_{A,q} g = u|_{\partial \Om}.
\]
The basic inverse problem associated to this map is whether knowledge of $N_{A,q}$ suffices to determine the electric potential $q$ and the magnetic field $dA$.  Here $dA$ makes sense by identifying $A$ with the $1$-form $A_{i}dx^i$.  We will refer to this as the Neumann-to-Dirichlet magnetic Schr\"{o}dinger inverse problem.  

A related problem is the Dirichlet-to-Neumann magnetic Schr\"{o}dinger inverse problem.  Here instead one considers the Dirichlet boundary value problem for $\Laq$, defines a Dirichlet-to-Neumann map $\Lambda_{A,q}$, and asks whether $\Lambda_{A,q}$ determines $q$ and $dA$.  If $A \equiv 0$, so $\Laq = -\Lap + q$, then the Dirichlet-to-Neumann magnetic Schr\"{o}dinger inverse problem is equivalent to Calder\'{o}n's problem, posed in ~\cite{C}, for $C^2$ conductivities.  

For Calder\'{o}n's problem, the fact that the Dirichlet-to-Neumann map determines $q$ was shown by Sylvester and Uhlmann in ~\cite{SU}.  This result was extended to the Dirichlet-to-Neumann magnetic Schr\"{o}dinger inverse problem by Nakamura, Sun, and Uhlmann in ~\cite{NSU}.  This work suffices to solve the Neumann-to-Dirichlet magnetic Schr\"{o}dinger inverse problem as well, since the problems can be shown to be equivalent in the case that $\Lambda_{A,q}$ and $N_{A,q}$ are fully known.  Much more work has been done on this problem since then, expanding this result to the case of less smooth potentials and unbounded domains; see for example ~\cite{Sa_thesis}, ~\cite{KrUh} and ~\cite{KLU}.

A natural follow-up question is to ask whether knowledge of $N_{A,q}$ or $\Lambda_{A,q}$ on a subset of the boundary suffices to recover $q$ and $dA$.  In principle this could mean either of the following:  partial output, where $N_{A,q}g|_{\Gamma}$ is known for some $\Gamma \subset \partial \Om$; or partial input, where $N_{A,q}g$ is known only for $g$ supported on $\Gamma \subset \partial \Om$.  

For the Dirichlet-to-Neumann magnetic Schr\"{o}dinger inverse problem, a partial output result was first given by Dos Santos Ferreira, Kenig, Sj\"{o}strand, and Uhlmann in ~\cite{DKSjU}.  This work was expanded to include a partial input result by this author in ~\cite{Ch1}.  Both of these papers build on previous work by Kenig, Sj\"{o}strand and Uhlmann in ~\cite{KSU}, which gives partial input and output results for the Calder\'{o}n problem.  These results for the Calder\'{o}n problem, as well as partial data results by Isakov ~\cite{I} have since been extended by Kenig and Salo in ~\cite{KeSa}.  Partial data results in unbounded domains have been achieved as well, for example in ~\cite{KLU} and ~\cite{Poh}.  For a more complete survey on recent partial data results, see ~\cite{KeSa_survey}.  

However, in the partial data case, the Dirichlet-to-Neumann and Neumann-to-Dirichlet problems are no longer equivalent -- the partial data problems for the Dirichlet-to-Neumann map represent different subsets of the Cauchy data than the partial data problems for the Neumann-to-Dirichlet map.  

The main result of this paper is to prove partial input and partial output theorems for the Neumann-to-Dirichlet magnetic Schr\"{o}dinger inverse problem, analogous to the ones in ~\cite{KSU}.  This is an extension of previous work in ~\cite{Ch2}, which proves a similar result for the Neumann-to-Dirichlet version of the Calder\'{o}n problem.  In order to describe the result properly, we will define `front' and `back' sets of $\Om$ as follows.  Suppose $\ph(x)$ is a smooth function on a neighbourhood of $\Om$.  Then we define $\partial \Om_{+}$ and $\partial \Om_{-}$ relative to $\ph$ by 
\begin{equation*}
\begin{split}
\partial \Om_{+} &= \{ p \in \partial \Om | \partial_{\nu} \ph(p) \geq 0 \} \\
\partial \Om_{-} &= \{ p \in \partial \Om | \partial_{\nu} \ph(p) \leq 0 \} \\
\end{split}
\end{equation*}

\begin{theorem}\label{MainThm}
Let $q_1, q_2 \in L^{\infty}(\Om)$, let $A_1, A_2$ be $C^2$ vector fields on $\Om$, with $\nu \cdot A_1 = \nu \cdot A_2$ on $\partial \Om$.  Define $\partial \Om_{+}$ and $\partial \Om_{-}$ using the function $\ph(x) = \pm \log |x-p|$, where $p \in \RnI$ is a point outside the closure of the convex hull of $\Om$.  Let $\Gamma_{\pm} \subset \partial \Om$ be neighbourhoods of $\partial \Om_{\pm}$.  Suppose
\[
N_{A_1,q_1} g|_{\Gamma_{+}} = N_{A_2,q_2} g|_{\Gamma_{+}}
\]
for all $g \in H^{-\half}(\partial \Om)$ whose support is contained in $\Gamma_{-}$.  Then $dA_1 = dA_2$, and $q_1 = q_2$.  
\end{theorem}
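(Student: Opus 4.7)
My plan follows the complex geometrical optics (CGO) strategy that underlies all results of this type, suitably adapted to the Neumann--Dirichlet setting. The first step is to derive an integral identity from the partial data hypothesis. Taking $u_j \in H^1(\Om)$ solving $\mathcal{L}_{A_j,q_j} u_j = 0$ with matching magnetic Neumann data $g \in H^{-\half}(\pd\Om)$ supported in $\Gamma_{-}$ (the hypothesis $\n \cdot A_1 = \n \cdot A_2$ on $\pd\Om$ ensures the magnetic Neumann boundary condition is unambiguous), the partial data assumption yields $u_1 = u_2$ on $\Gamma_{+}$. Pairing the difference with a test solution $v$ of the adjoint equation and integrating by parts twice, I obtain an identity of the form
\[
\int_{\Om} \bigl\{ (A_1 - A_2)\cdot (\bar v\, D u_1 - u_1\, D\bar v) + (q_1 - q_2 + |A_1|^2 - |A_2|^2)\, u_1 \bar v \bigr\}\, dx = B,
\]
where $B$ is a boundary term supported on $\pd\Om \setminus (\Gamma_{+}\cup\Gamma_{-})$, involving the trace of $u_1$ on $\pd\Om_{+}\setminus\Gamma_{+}$ and the magnetic normal derivative of $v$ on $\pd\Om_{-}\setminus\Gamma_{-}$.

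The second step is the construction of CGO solutions adapted to the log-type weight $\ph(x)=\log|x-p|$, which is a limiting Carleman weight for the Euclidean Laplacian. I seek $u_1 = e^{-\ph/h}(a_1 + r_1)$ solving $\mathcal{L}_{A_1,q_1} u_1 = 0$ and $v = e^{\ph/h}(a_2 + r_2)$ solving the adjoint equation, where the amplitudes $a_j$ are chosen from a family of Kelvin-transformed plane waves satisfying the appropriate $A_j$-dependent transport equations, so that products $a_1 \bar a_2$ encode two-dimensional Fourier data on planes through $p$. The remainders $r_j$ are produced from the improved Carleman estimate that is the main technical input of the paper: the point of sharpening \cite{Ch2} is that I need $r_j$ to have strictly more than $H^1$ regularity, so that $A\cdot D r_j$ contributions remain $o(1)$ as $h\to 0$, while simultaneously enjoying the boundary smallness on $\pd\Om_{\pm}\setminus\Gamma_{\pm}$ that forces $B \to 0$ in the limit.

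Substituting the CGO solutions into the integral identity and letting $h\to 0$, the leading contribution reduces to a Fourier-type transform of $(A_1-A_2)$ and $(q_1 - q_2)$ against products of the amplitudes. Varying the pole $p$ throughout the exterior of the convex hull of $\Om$ and varying the remaining amplitude parameters covers a rich enough family of two-planes that, by a Helgason support/Fourier-slice argument in the spirit of \cite{DKSjU} and \cite{KSU}, the identity forces $dA_1 = dA_2$. Simple connectedness of $\Om$ together with $\n\cdot A_1 = \n\cdot A_2$ on $\pd\Om$ then produces a gauge $\Phi \in C^3(\Om)$ with $A_1 - A_2 = \grad \Phi$ and $\nd \Phi|_{\pd\Om} = 0$; conjugating by $e^{-i\Phi}$ preserves the Neumann--Dirichlet data and reduces matters to the case $A_1 = A_2$, in which the identity collapses to $\int_{\Om} (q_1 - q_2)\, u_1 \bar v\, dx = 0$ and the standard product-of-CGO Fourier argument yields $q_1 = q_2$.

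The crux is the CGO step. The Neumann--Dirichlet partial data setup demands a Carleman estimate with boundary traces dual to those arising in the Dirichlet--Neumann version: one must solve for $r_j$ while controlling $r_1|_{\pd\Om_{+}\setminus\Gamma_{+}}$ and the magnetic normal trace of $v$ on $\pd\Om_{-}\setminus\Gamma_{-}$. The estimate from \cite{Ch2} provides only $H^1$-level solvability, which is insufficient to absorb the first-order $A\cdot D$ error introduced by the magnetic term. Upgrading the estimate to gain additional regularity (or equivalently an extra power of $h$), while retaining the partial-data boundary control, is the essential technical obstacle; the downstream Fourier inversion that recovers $dA$ and the subsequent gauge reduction for $q$ are then standard.
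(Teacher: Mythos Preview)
Your overall CGO architecture is correct, but two of the central technical points are described incorrectly, and each matters for the argument.

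First, the Carleman estimate improvement goes in the opposite direction from what you state. The estimate from \cite{Ch2} is
\[
h^{\half}\|w\|_{H^1(\Gamma_+^c)} + h\|w\|_{H^1(\Om)} \lesssim \|\Lphaq w\|_{L^2(\Om)},
\]
and by the Hahn--Banach duality this yields CGO remainders only in $L^2$, not $H^1$. The paper's new estimate shifts every norm \emph{down} by one:
\[
h^{\half}\|w\|_{L^2(\Gamma_+^c)} + h\|w\|_{L^2(\Om)} \lesssim \|\Lphaq w\|_{H^{1*}(\Om)},
\]
so that after dualising one obtains remainders $r$ with $\|r\|_{H^1(\Om)} = O(h^{\half})$. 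This $H^1$ bound (not ``strictly more than $H^1$'') is exactly what makes the first-order term harmless: in the limit one needs $\|hDr\|_{L^2} = o(1)$, and $H^1$ with an $O(h^{\half})$ bound delivers precisely that.

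Second, your boundary term is misplaced. Since $\Gamma_\pm$ are neighbourhoods of $\partial\Om_\pm$ one has $\Gamma_+\cup\Gamma_- = \partial\Om$, so the set $\partial\Om\setminus(\Gamma_+\cup\Gamma_-)$ is empty; likewise $\partial\Om_+\setminus\Gamma_+ = \emptyset$. The actual boundary remainder in the paper lives on $\Gamma_+^c$: with $u_1$ the CGO solution whose magnetic Neumann data vanishes on $\Gamma_-^c$ and $w$ the $\mathcal{L}_{A_2,q_2}$-solution sharing that Neumann data, the hypothesis gives $u_1=w$ on $\Gamma_+$, and Green's formula leaves
\[
\int_{\Gamma_+^c} (u_1 - w)\,\overline{\nu\cdot(\nabla+iA_2)u_2}\,dS.
\]
This is not controlled by smallness of the CGO remainders on some boundary region. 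Instead the Carleman estimate is used a \emph{second} time, now applied to $e^{\ph/h}(u_1-w)$, and it is the boundary term $h^{\half}\|\,\cdot\,\|_{L^2(\Gamma_+^c)}$ on the left of the estimate that makes this integral $O(h^{-\half})$ (and later $O(h^{\half})$ once $A_1=A_2$). Only the single CGO $u_1$ carries a prescribed boundary condition; the test solution $u_2$ is a standard CGO with no boundary control. Finally, the pole $p$ is fixed throughout; one varies the phase $\psi$ and amplitude as in \cite{DKSjU}, not $p$.
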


Note that if $\Om$ is strictly convex, then Theorem \ref{MainThm}, with the choice of $\ph = -\log|x-p|$, implies that the set on which the Neumann-Dirichlet maps are measured can be made arbitrarily small, by proper choice of $p$.  On the other hand, choosing $\ph = +\log|x-p|$ would imply that the set on which the input functions are supported can be arbitrarily small.  

The main new ingredient used in the proof of Theorem \ref{MainThm} is the following Carleman estimate, which allows us to construct $H^1$ complex geometric optics (CGO) solutions for the problem $\Laq u = 0$.  

\begin{theorem}\label{MainCarl}
Suppose $\ph$ and $\Gamma_{+}$ are as in Theorem \ref{MainThm}, and $w \in H^1(\Om)$ is such that
\begin{equation}\label{logBC}
\begin{split}
w, \partial_{\nu} w &= 0 \mbox{ on } \Gamma_{+} \\
h\partial_{\nu} w &= (\partial_{\nu} \ph) w + h\s w \mbox{ on } \Gamma_{+}^c \\
\end{split}
\end{equation}
for some order zero operator $\s$ bounded uniformly in $h$.  There exists $h_0 > 0$ such that if $0<h<h_0$, then
\begin{equation}\label{theCarl}
h^{\half} \|w\|_{L^2(\Gamma_{+}^c)} + h\|w\|_{L^2(\Om)} \lesssim \| \Lphaq w \|_{H^{1*}(\Om)}.
\end{equation}
Here $H^{1*}$ is the dual space to the semiclassical $H^1$ space with semiclassical parameter $h$, and $\Lphaq$ is the conjugated operator
\[
\Lphaq = h^2 e^{\frac{\ph}{h}}\Laq e^{-\frac{\ph}{h}}.
\]   
\end{theorem}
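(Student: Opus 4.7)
My plan is to reduce \eqref{theCarl} to the corresponding estimate for the conjugated Laplacian $\Lph = h^2 e^{\ph/h}(-\Lap) e^{-\ph/h}$ — for which an $L^2$-type analogue is the main Carleman estimate of \cite{Ch2} — and then to upgrade the right-hand side from $L^2$ to the semiclassical dual $H^{1*}$. A direct expansion of the conjugation gives
\[
\Lphaq - \Lph = 2h\, A\cdot(hD + i\grad\ph) + h^2\bigl(|A|^2 + q - i\,\grad\cdot A\bigr),
\]
a first-order operator whose coefficients carry a factor of $h$. Since $A \in C^2(\Om)$, the first-order piece $2h^2\,A\cdot Dw$ rewrites as $2h\,hD\cdot(Aw) - 2ih^2(\grad\cdot A)w$, so when tested against $H^1_{scl}$ functions one obtains
\[
\|(\Lphaq - \Lph)w\|_{H^{1*}(\Om)} \lesssim h\,\|w\|_{L^2(\Om)}.
\]
Hence once \eqref{theCarl} is proved with $\Lph$ in place of $\Lphaq$, the extra $Ch\|w\|_{L^2(\Om)}$ is absorbed into the $h\|w\|_{L^2(\Om)}$ on the left for $h$ sufficiently small.

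It therefore suffices to prove \eqref{theCarl} for $\Lph$. The estimate of \cite{Ch2} provides
\[
h\|w\|_{L^2(\Om)} + h^{\half}\|w\|_{L^2(\G_{+}^c)} \lesssim \|\Lph w\|_{L^2(\Om)}
\]
under \eqref{logBC}. To obtain an $H^{1*}$ right-hand side, I would first enrich the left-hand side by a semiclassical gradient term. Pair $\Lph w$ against $\bar w$ and integrate by parts: the Laplacian contributes $\|hDw\|_{L^2(\Om)}^2$, the mixed term $2h\grad\ph\cdot\grad w$ produces $-h\int_{\pd\Om}(\nd\ph)|w|^2\,dS$ and a volume term absorbable by the base estimate, and the boundary contribution $-h^2\int \nd w\,\bar w\,dS$ becomes, after substituting \eqref{logBC}, $+h\int_{\G_{+}^c}(\nd\ph)|w|^2\,dS + O(h^2)\|w\|_{L^2(\G_{+}^c)}^2$ (with no contribution from $\G_+$ since $w=\nd w=0$ there). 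These two boundary integrals cancel up to the $O(h^2)$ correction, which is absorbed using the boundary term already present in the base estimate. The outcome is
\[
h\|w\|_{L^2(\Om)} + h\|hDw\|_{L^2(\Om)} + h^{\half}\|w\|_{L^2(\G_{+}^c)} \lesssim \|\Lph w\|_{L^2(\Om)},
\]
which is $h\|w\|_{H^1_{scl}(\Om)} \lesssim \|\Lph w\|_{L^2(\Om)}$ with an extra boundary gain.

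Duality then yields the $H^{1*}$ form. Given arbitrary $\psi \in L^2(\Om)$, solve the adjoint problem $\Lph^* v = \psi$ with the dual boundary conditions, apply the enriched estimate to $v$ to get $\|v\|_{H^1_{scl}} \lesssim h^{-1}\|\psi\|_{L^2}$, and compute
\[
|\langle w, \psi\rangle| = |\langle \Lph w, v\rangle| \leq \|\Lph w\|_{H^{1*}(\Om)}\,\|v\|_{H^1_{scl}} \lesssim h^{-1}\|\Lph w\|_{H^{1*}(\Om)}\,\|\psi\|_{L^2(\Om)}.
\]
Taking the supremum over $\|\psi\|_{L^2(\Om)} = 1$ gives $h\|w\|_{L^2(\Om)} \lesssim \|\Lph w\|_{H^{1*}(\Om)}$, and the same argument with $\psi$ supported on $\G_{+}^c$ (using the boundary term on the left of the enriched estimate) yields $h^{\half}\|w\|_{L^2(\G_{+}^c)} \lesssim \|\Lph w\|_{H^{1*}(\Om)}$. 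Combining with the magnetic-perturbation reduction of the first step closes the proof of \eqref{theCarl}.

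The main obstacle is the boundary analysis in the energy identity, together with setting up a clean duality. The condition \eqref{logBC} is overdetermined on $\G_{+}$ but only Robin-type on $\G_{+}^c$, so every integration by parts must be tracked separately on the two pieces of $\pd\Om$; the crucial cancellation between the boundary terms in the energy identity depends on the precise form of \eqref{logBC} together with the sign of $\nd\ph$ on $\G_{+}^c$, guaranteed by the choice $\ph = \pm\log|x-p|$ and the definition of the front/back sets. Once the boundary signs and powers of $h$ align, the remainder of the argument is essentially the perturbation-and-duality bookkeeping sketched above.
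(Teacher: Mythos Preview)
The reduction from $\Lphaq$ to $\Lph$ is essentially correct (though your bound $\|(\Lphaq-\Lph)w\|_{H^{1*}}\lesssim h\|w\|_{L^2}$ actually picks up an extra $h^{3/2}\|w\|_{L^2(\Gamma_+^c)}$ from the boundary term when $A\cdot hDw$ is paired against $v\in H^1(\Om)$; this is harmless since it can be absorbed by the left side of \eqref{theCarl}).  Note also that the estimate of \cite{Ch2} already carries $H^1$ on the left (see Proposition~\ref{NDCarl}), so your energy-identity ``enrichment'' is unnecessary.

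The genuine gap is the duality step.  You propose to solve $\Lph^* v=\psi$ with the ``dual boundary conditions'' and then apply the enriched estimate to $v$.  But the dual boundary conditions are $v=\partial_\nu v=0$ on $\Gamma_-$ together with a Robin condition on $\Gamma_-^c$: this is an \emph{overdetermined} problem.  The enriched estimate (or its analogue for $\Lph^*$) is an a~priori bound, hence gives \emph{uniqueness}, not existence; for generic $\psi\in L^2(\Om)$ there is simply no $v$ satisfying all three conditions simultaneously.  If instead you attempt to manufacture $v$ by Hahn--Banach, the functional $\Lph w\mapsto (w,\psi)$ must be bounded on $\{\Lph w\}$ as a subset of $H^{1*}$, which requires $|(w,\psi)|\lesssim h^{-1}\|\psi\|_{L^2}\|\Lph w\|_{H^{1*}}$ --- and after taking the supremum over $\psi$ this is precisely $h\|w\|_{L^2}\lesssim\|\Lph w\|_{H^{1*}}$, the estimate you are trying to prove.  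The argument is circular.

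This is exactly why the paper does not argue by abstract duality.  Shifting both sides of a Carleman estimate down by one Sobolev index is not a formal manoeuvre when boundary conditions are present.  The paper instead flattens $\Gamma_+^c$ and constructs explicit operators $J$, $J^{-1}$, $P$, and $Q=J^{-1}+T_\Phi P$ on the half-space which behave like order-$\pm 1$ operators \emph{and preserve the mixed boundary condition} \eqref{logBC}.  One then applies the $H^1\!\to\! L^2$ estimate of \cite{Ch2} to $\chi Qw$ rather than to $w$, and the bulk of the work (Claims S.1--S.4 and L.1--L.4 in Sections~4--5) is to show that $\|\Lphet Qw\|_{L^2}$ is controlled by $\|\Lphet w\|_{H^{1*}}$ plus absorbable errors.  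This requires a factorisation $\Lphet\approx J^*B$, a small/large frequency splitting to keep the symbols smooth, and a careful comparison of boundary traces using the Robin condition.  None of this machinery can be replaced by a duality shortcut.
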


All Sobolev norms here and in the rest of the paper are semiclassical unless otherwise stated. The complex geometrical optics solutions are then as follows.

\begin{prop}\label{CGOs}
Let $\ph$ and $\Gamma_{-}$ be as in Theorem \ref{MainThm}.  Then there exists a solution $u \in H^1(\Om)$ of the problem 
\begin{eqnarray*}
\Laq u &=& 0 \mbox{ on } \Om\\
\nu \cdot (\grad + iA) u|_{\Gamma_{-}^c} &=& 0 
\end{eqnarray*}
of the form $u = e^{\frac{1}{h}(-\ph + i \psi)}(a+r)$, where $a$ is a $C^2$ function with bounds uniform in $h$; $\psi$ is a smooth real solution to the eikonal equation $\grad \ph \cdot \grad \psi = 0,  |\grad \psi| = |\grad \ph|$; and $\|r\|_{H^1(\Om)} \leq O(h^{\half})$.  In particular, $\ph$, $\psi$ and $a$ are as in the CGO solutions in ~\cite{DKSjU}.
\end{prop}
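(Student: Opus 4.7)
The plan is to construct $u$ by a WKB + Hahn-Banach + Carleman scheme, using Theorem \ref{MainCarl} as the key a priori estimate.

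First I would import the phase $\psi$ and amplitude $a$ directly from \cite{DKSjU}: for $\ph = \pm\log|x-p|$, $\psi$ solves the eikonal pair $\grad\ph\cdot\grad\psi = 0$ and $|\grad\psi|=|\grad\ph|$ on a neighbourhood of $\bar{\Om}$, and $a \in C^2(\bar{\Om})$ comes from the first-order transport equation adapted to $\Laq$. A direct computation yields the identity $e^{-i\psi/h}\Lphaq(e^{i\psi/h}a) = h^2 \Laq a$, so $\|\Lphaq(e^{i\psi/h}a)\|_{L^2(\Om)} = O(h^2)$; the remaining freedom in the DKSjU construction is used to arrange the boundary compatibility $(\pd_\nu\ph - i\pd_\nu\psi)a = O(h)$ on $\Gamma_-^c$. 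Writing $u = e^{(-\ph + i\psi)/h}(a+r)$ and setting $v_r := e^{i\psi/h} r$, the conditions $\Laq u = 0$ in $\Om$ and $\nu\cdot(\grad + iA)u = 0$ on $\Gamma_-^c$ translate into
\[
\Lphaq v_r = f \text{ in } \Om, \qquad h\pd_\nu v_r - (\pd_\nu\ph)v_r + ih(\nu\cdot A)v_r = g \text{ on } \Gamma_-^c,
\]
with $f := -\Lphaq(e^{i\psi/h}a)$ of size $O(h^2)$ in $L^2(\Om)$ and $g$ of size $O(h)$ in $L^2(\Gamma_-^c)$ by the compatibility step.

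Next I would apply Theorem \ref{MainCarl} to the formal adjoint $\Lphaq^* = \L_{A,q,-\ph}$ (formal self-adjointness of $\Laq$ for real $A, q$ follows from a direct integration by parts). Sign-flipping $\ph$ swaps $\pd\Om_\pm$ and hence $\Gamma_\pm$, and with the order-zero choice $\s = -i\nu\cdot A$ Theorem \ref{MainCarl} gives, for any $w \in H^1(\Om)$ with $w = \pd_\nu w = 0$ on $\Gamma_-$ and $h\pd_\nu w = -(\pd_\nu\ph)w - ih(\nu\cdot A)w$ on $\Gamma_-^c$,
\[
h^{\half}\|w\|_{L^2(\Gamma_-^c)} + h\|w\|_{L^2(\Om)} \lesssim \|\L_{A,q,-\ph}w\|_{H^{1*}(\Om)}.
\]
A Green's identity computation shows that these boundary conditions on $w$ are precisely the ones for which the pairing $(f, w)_\Om - (g, w)_{\Gamma_-^c}$ captures the weak formulation of the problem for $v_r$.

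To finish, I would run a standard Hahn-Banach / Riesz argument. Define the linear functional $\L_{A,q,-\ph}w \mapsto (f,w)_\Om - (g,w)_{\Gamma_-^c}$ on the range of $\L_{A,q,-\ph}$ inside $H^{1*}(\Om)$; Cauchy-Schwarz together with the Carleman estimate above bound it by
\[
\bigl[h^{-1}\|f\|_{L^2(\Om)} + h^{-\half}\|g\|_{L^2(\Gamma_-^c)}\bigr]\|\L_{A,q,-\ph}w\|_{H^{1*}(\Om)} \lesssim h^{\half}\|\L_{A,q,-\ph}w\|_{H^{1*}(\Om)}.
\]
Extending via Hahn-Banach and identifying $(H^{1*})^* = H^1$ produces $v_r \in H^1(\Om)$ of size $O(h^{\half})$ solving the system weakly, and since multiplication by $e^{-i\psi/h}$ is bounded on the semiclassical $H^1$ space, $\|r\|_{H^1(\Om)} = O(h^{\half})$ follows. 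The hardest step will be verifying $\|g\|_{L^2(\Gamma_-^c)} = O(h)$, which hinges on arranging the boundary trace of $a$ as above using the freedom in the DKSjU construction; without this the argument would only yield $\|r\|_{H^1} = O(h^{-\half})$. The essential improvement given by the $H^{1*}$ norm on the right-hand side of Theorem \ref{MainCarl}, as opposed to the $L^2$ norm in \cite{Ch2}, is precisely what upgrades the Hahn-Banach output from $L^2$ to the $H^1$ regularity needed to treat the first-order magnetic term in the proof of Theorem \ref{MainThm}.
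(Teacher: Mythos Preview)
Your overall architecture---WKB ansatz from \cite{DKSjU}, Carleman estimate for the adjoint with weight $-\ph$, Hahn--Banach duality to produce $v_r \in H^1$---is exactly the paper's route. But the boundary bookkeeping has a concrete error that creates a false obstacle.

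The Green identity for the \emph{semiclassical} conjugated operator $\Lphaq = h^2 e^{\ph/h}\Laq e^{-\ph/h}$ produces boundary terms carrying an extra factor of $h$: for $w$ satisfying your adjoint boundary conditions one gets
\[
(\L_{\overline{A},\overline{q},-\ph} w,\, v_r)_{\Om} = (w,\, \Lphaq v_r)_{\Om} + h\,(w,\, (h\pd_\nu - ih\nu\cdot A - \pd_\nu\ph)v_r)_{\Gamma_-^c},
\]
not the pairing $(f,w)_\Om - (g,w)_{\Gamma_-^c}$ you wrote. Consequently the correct functional to bound is $w \mapsto (w,f)_\Om + h(w,g)_{\Gamma_-^c}$, and Cauchy--Schwarz plus the Carleman estimate give the coefficient $h^{-1}\|f\|_{L^2(\Om)} + h^{\half}\|g\|_{L^2(\Gamma_-^c)}$, with $h^{+\half}$ rather than your $h^{-\half}$ on the boundary term. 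Since $g = -e^{i\psi/h}\bigl[(i\pd_\nu\psi - \pd_\nu\ph)a + O(h)\bigr]$ is only $O(1)$ in $L^2$, this already yields $\|v_r\|_{H^1} = O(h^{\half})$ with no boundary compatibility needed. This is precisely what the paper does (Lemma~\ref{HBsolns}).

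Your proposed rescue---``arrange $(\pd_\nu\ph - i\pd_\nu\psi)a = O(h)$ on $\Gamma_-^c$ using the freedom in the DKSjU construction''---is not actually available. On $\Gamma_-^c$ one has $\pd_\nu\ph > 0$ strictly, so $\pd_\nu\ph - i\pd_\nu\psi$ is nowhere zero there; forcing the product to be $O(h)$ would require $a = O(h)$ along $\Gamma_-^c$, and the transport equation propagates this into the interior, killing the amplitude. So the ``hardest step'' you flag is both unnecessary (once the missing $h$ is restored) and unachievable as stated.
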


It is worth pausing here to compare the Carleman estimate in Theorem \ref{MainCarl} to the Carleman estimate from Theorem 1.3 in ~\cite{Ch2}, which concludes that if $w \in H^1(\Om)$, and satisfies \eqref{logBC}, then
\[
h^{\half} \|w\|_{H^1(\Gamma_{+}^c)} + h\|w\|_{H^1(\Om)} \lesssim \| \Lphaq w \|_{L^{2}(\Om)}.
\]
In Theorem \ref{MainCarl}, all of the Sobolev norms have essentially been shifted down by one.  This shift is what allows us to create the $H^1$ CGO solutions from Proposition \ref{CGOs}, which in turn are critical for handling the first order term in the operator \eqref{Laq}.  

The proof of Theorem \ref{MainCarl} is the main new technical contribution of this paper.  The key point is the construction of continuous operators from $H^1$ to $L^2$ and vice versa, which preserve the boundary conditions \eqref{logBC}, and have the commutator properties of a semiclassical pseudodifferential operator.  This allows us to obtain Theorem \ref{MainCarl} from Theorem 1.3 of ~\cite{Ch2} by an appropriate substitution, and careful analysis of the resulting error terms.  The construction is a fairly delicate matter, since it requires splitting the function $w$ into small and large frequency parts, and creating the appropriate operator for each part.  In order to present the proof of Theorem \ref{MainCarl} clearly, it will help to first describe the proof of a modified version, where $\ph$ is linear instead of logarithmic.

\begin{theorem}\label{LinearCarl}
Suppose $\ph(x) = \lambda \cdot x$, where $\lambda$ is a fixed unit vector in $\RnI$, and $\Gamma_{+}$ is a neighbourhood of $\partial \Om_{+}$.  Let $w \in H^1(\Om)$ be such that
\begin{equation}\label{BC}
\begin{split}
w, \partial_{\nu} w &= 0 \mbox{ on } \Gamma_{+} \\
h\partial_{\nu} (e^{-\frac{\ph}{h}} w) &= h\s e^{-\frac{\ph}{h}} w \mbox{ on } \Gamma_{+}^c \\
\end{split}
\end{equation}
for some smooth function $\s$ bounded uniformly in $h$.  There exists $h_0 > 0$ such that if $0<h<h_0$, then
\begin{equation}\label{thelinCarl}
h^{\half} \|w\|_{L^2(\Gamma_{+}^c)} + h\|w\|_{L^2(\Om)} \lesssim \| \Lphaq w \|_{H^{1*}(\Om)},
\end{equation}
where $\Lphaq$ is the conjugated operator
\[
\Lphaq = h^2 e^{\frac{\ph}{h}}\Laq e^{-\frac{\ph}{h}}.
\]   
\end{theorem}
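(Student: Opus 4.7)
The plan is to deduce \eqref{thelinCarl} from the Carleman estimate of Theorem 1.3 in \cite{Ch2}, which states that any $v \in H^1(\Om)$ satisfying \eqref{BC} obeys $h^{\half}\|v\|_{H^1(\Gamma_{+}^c)} + h\|v\|_{H^1(\Om)} \lesssim \|\Lphaq v\|_{L^2(\Om)}$. The idea is to apply this to $v = Tw$ for an auxiliary operator $T$ that behaves to leading order like the semiclassical Fourier multiplier $\langle hD\rangle^{-1}$: then formally $\|Tw\|_{H^1} \sim \|w\|_{L^2}$ and
\[
\|\Lphaq Tw\|_{L^2} \leq \|T\Lphaq w\|_{L^2} + \|[\Lphaq,T]w\|_{L^2} \lesssim \|\Lphaq w\|_{H^{1*}(\Om)} + h\|w\|_{L^2(\Om)},
\]
using that $T$ is a continuous isomorphism $L^2 \to H^1$ and $H^{1*} \to L^2$, and that the semiclassical symbolic calculus gives $[\Lphaq,T] = hR_0$ with $R_0$ of order zero. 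The last term is absorbed into the LHS for $h_0$ sufficiently small.

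The difficulty is constructing $T$ so that $Tw$ actually satisfies the boundary conditions \eqref{BC} whenever $w$ does, since a naive global $T = \langle hD\rangle^{-1}$ is non-local. I would therefore split $w$ by tangential frequency. In boundary normal coordinates $x = (x',x_{n+1})$ near $\pd\Om$, suitably extended into the interior, pick a smooth cutoff $\x_0$ equal to $1$ on $\{|h\xi'| \leq R\}$ and supported in $\{|h\xi'| \leq 2R\}$ for $R$ large, and define $T$ to equal (essentially) the identity on the range of $\x_0(hD_{x'})$, and a tangential pseudodifferential operator with symbol $(h|\xi'|)^{-1}$ on the complementary high-frequency range. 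Either way $\|Tw\|_{H^1} \sim \|w\|_{L^2}$ with constants depending on $R$, and because $T$ acts purely tangentially it commutes with $\nd$ at leading order and preserves Dirichlet data on $\pd\Om$.

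Applying Theorem 1.3 of \cite{Ch2} to $Tw$ and estimating as above yields $h^{\half}\|w\|_{L^2(\Gamma_{+}^c)} + h\|w\|_{L^2(\Om)} \lesssim \|\Lphaq w\|_{H^{1*}(\Om)}$ modulo absorbable $O(h)\|w\|_{L^2}$ errors coming from the frequency cutoff commutator and the boundary commutators, and this produces \eqref{thelinCarl} once $h_0$ is chosen small enough. The main obstacle is precisely the boundary-preserving construction of $T$: one must verify that the commutators of $T$ with $\nd$, with $\nd\ph$, and with multiplication by $e^{-\ph/h}$ all produce error terms that fit into the form $h\tilde\s e^{-\ph/h} w$ on $\Gamma_{+}^c$ required by the hypothesis of the Ch2 estimate, i.e., $\tilde\s$ must remain of order zero uniformly in $h$. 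The linearity of $\ph$ is essential here: after choosing coordinates with $\l = e_{n+1}$, the weight $e^{-\ph/h} = e^{-x_{n+1}/h}$ depends only on the normal variable and commutes exactly with the purely tangential $T$, leaving only the lower-order $\nd$-commutators to analyze.
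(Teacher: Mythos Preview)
Your overall strategy---apply the Ch2 estimate to $Tw$ for some $T$ that shifts regularity down by one---is exactly the paper's, but the choice of $T$ as a purely tangential operator cannot work, and this is the heart of the difficulty.

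The claim $\|Tw\|_{H^1(\Om)}\sim\|w\|_{L^2(\Om)}$ fails for any tangential $T$. The semiclassical $H^1$ norm contains $\|h\partial_\nu Tw\|_{L^2}$; since a tangential $T$ commutes with $h\partial_\nu$, this equals $\|T(h\partial_\nu w)\|_{L^2}$, which is controlled only by $h\partial_\nu w$ in some negative \emph{tangential} Sobolev norm, not by $\|w\|_{L^2}$. Dually, the same obstruction kills $T:H^{1*}\to L^2$, so the step $\|T\Lphaq w\|_{L^2}\lesssim\|\Lphaq w\|_{H^{1*}}$ also fails. (On your low-frequency piece $T$ is the identity, and there $\|Tw\|_{H^1}=\|w\|_{H^1}$, which is not $\simeq\|w\|_{L^2}$ since the \emph{normal} frequency is unrestricted.) The paper's operator is $Q=J^{-1}+T_\Phi P$, where $J=T_F+h\partial_y$ genuinely involves the normal derivative; it is this normal component that produces $\|J^{-1}u\|_{H^1}\simeq\|u\|_{L^2}$ (Lemma~\ref{bddness}). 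But once the operator contains $h\partial_y$ it no longer trivially preserves the boundary condition, which is why the Poisson-type correction $T_\Phi P$ must be added, with $\Phi$ tailored to the oblique condition~\eqref{tildeBC}.

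There is also a coordinate confusion. If you set $\lambda=e_{n+1}$ so that $e^{-\ph/h}$ depends only on $x_{n+1}$, then ``tangential'' for you means tangential to the hyperplanes $\{x_{n+1}=\mathrm{const}\}$, not to $\partial\Om$; such a $T$ does not preserve Dirichlet data on the curved boundary, nor does it commute with $\partial_\nu$. Conversely, in boundary normal coordinates $\ph$ is no longer a function of the normal variable alone, so $e^{-\ph/h}$ does not commute with your tangential $T$. The paper handles this by first localizing so that $\Gamma_+^c$ is a graph $y=f(x)$ over the $\ph$-level sets and then flattening via $(x,y)\mapsto(x,y-f(x))$; this turns the boundary condition into the oblique one and puts variable coefficients into the operator (Proposition~\ref{CofVCarl}).

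Finally, even with the correct $Q$, the bound $\|\Lphet Qw\|_{L^2}\lesssim\|\Lphet w\|_{H^{1*}}$ is \emph{not} a one-line commutator estimate. The paper explicitly notes that the hoped-for inequality $\|v\|_{L^2}\lesssim\|Jv\|_{H^{1*}}$ fails in general (see~\eqref{hopesndreams}); it only holds for $v$ of the special form $\Lphet Qw$, and proving this requires factoring $\Lphet$ approximately as $J^* B$, using the duality characterization of $\|J^* u\|_{H^{1*}}$ in Lemma~\ref{H1star}, and then comparing the boundary traces of $BQw$ and $Bw$ via the boundary condition (Claims S.2--S.4 and L.2--L.4). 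Separate arguments are needed at low and high tangential frequency because the factorization symbols $A_\pm$ have a branch cut.
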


The plan of this paper is then as follows.  In the next section we will prove Theorem \ref{MainThm} using Theorem \ref{MainCarl} and Proposition \ref{CGOs}. In Section 3 we will introduce modified versions of the operators in ~\cite{Ch2}, and in Section 4, we will use these to prove the Carleman estimate Theorem \ref{LinearCarl}.  In Section 5, we will modify these arguments to deal with the logarithmic $\ph$, and thus prove Theorem \ref{MainCarl}. Finally, Proposition \ref{CGOs} will be proved in Section 6. 
\vspace{4mm}

\noindent \textbf{Acknowledgements}  This work was partly done at the University of Jyv\"{a}skyl\"{a}, with support from the Academy of Finland.  The author is also very grateful to Mikko Salo for his time and support, and for many helpful conversations and comments during his time in Jyv\"{a}skyl\"{a}.  

\section{Proof of Theorem \ref{MainThm}}

Suppose that 
\[
u_1 = e^{\frac{-\ph + i\psi}{h}}(a_1 + r_1)
\]
is a CGO solution to
\begin{eqnarray*}
\L_{A_1, q_1} u_1 &=& 0 \mbox{ on } \Om\\
\nu \cdot (\grad + iA) u_1 |_{\Gamma_{-}^c} &=& 0, 
\end{eqnarray*}
as obtained from Proposition \ref{CGOs}.  Let 
\[
u_2 = e^{\frac{\ph + i\psi}{h}}(a_2 + r_2)
\]
be a standard CGO solution to $\L_{A_2,\overline{q}_2} u_2 = 0$, with no known conditions on its boundary behaviour.  

Now define $w \in H^1(\Om)$ to be the solution to
\begin{equation*}
\begin{split}
\L_{A_2,q_2} w &= 0 \mbox{ on } \Om \\
\nu \cdot (\grad + iA_2) w|_{\partial \Om} &= \nu \cdot (\grad + iA_2) u_1 |_{\partial \Om} 
\end{split}
\end{equation*}.

Then consider the integral
\[
\int_{\partial \Om} (N_{A_1,q_1} - N_{A_2, q_2})(\nu \cdot (\grad + iA_1)u_1)\overline{\nu \cdot (\grad + iA_2)u_2} dS.
\]
By definition of $u_1$, and the assumption on $N_{A_1,q_1}$ and  $N_{A_2, q_2}$, this is
\[
\int_{\Gamma_{+}^c} (N_{A_1,q_1} - N_{A_2, q_2})(\nu \cdot (\grad + iA_1)u_1))\overline{\nu \cdot (\grad + iA_2)u_2} dS.
\]
Now $u_1$ has been chosen so that $N_{A_1, q_1}(\nu \cdot (\grad + iA_1)u_1)) = u_1$ on $\Gamma_+^c$.  Similarly, since $\nu \cdot A_1 = \nu \cdot A_2$, we have that $N_{A_2, q_2}(\nu \cdot (\grad + iA_1)u_1)) = w$ on $\Gamma_+^c$.  Therefore we get
\[
\int_{\Gamma_{+}^c} (u_1 - w)\overline{\nu \cdot (\grad + iA_2)u_2} dS.
\]
Now by Green's theorem
\[
\int_{\Gamma_{+}^c} (u_1 - w)\overline{\nu \cdot (\grad + iA_2)u_2} dS = \int_{\Om} (u_1 - w) \L_{A_2,q_2}\overline{u}_2 dV - \int_{\Om} \L_{A_2,q_2}(u_1 - w)\overline{u_2} dV.
\]
The other boundary term vanishes since $\partial_{\nu} w|_{\partial \Om} = \partial_{\nu} u_1 |_{\partial \Om}$.  Moreover, the first term on the right side is zero by definition of $u_2$, so
\begin{equation*}
\begin{split}
\int_{\Gamma_{+}^c} (u_1 - w)\overline{\nu \cdot (\grad + iA_2)u}_2 dS &= \int_{\Om} \L_{A_2,q_2}(w - u_1)\overline{u_2} dV \\ 
                                                                                           &= \int_{\Om} (\L_{A_1,q_1} - \L_{A_2,q_2})(u_1) \overline{u_2} dV. \\
\end{split}
\end{equation*}
Therefore
\begin{equation}\label{FinalGreen}
\begin{split}
& \int_{\Gamma_{+}^c} (u_1 - w)\overline{\nu \cdot (\grad + iA_2)u}_2 dS = \\
& \int_{\Om} (A_1^2 - A_2^2 + q_1 - q_2)u_1 \overline{u_2} dV + \int_{\Om} (A_1 - A_2)\cdot(u_1\overline{Du_2}+ D u_1 \overline{u_2})dV.
\end{split}
\end{equation}
Now as in ~\cite{DKSjU} or ~\cite{Ch2}, the integral on the left can be bounded by 
\[
h^{\half}\|e^{\frac{\ph}{h}}(u_1 - w)\|_{L^2(\Gamma_{+}^c)} \cdot  h^{-\half} \left(\|e^{-\frac{\ph}{h}}\partial_{\nu} u_2\|_{L^2(\Gamma_{+}^c)} + \|e^{-\frac{\ph}{h}}u_2\|_{L^2(\Gamma_{+}^c)}\right),
\] 
and
\[
 \left(\|e^{-\frac{\ph}{h}}\partial_{\nu} u_2\|_{L^2(\Gamma_{+}^c)} + \|e^{-\frac{\ph}{h}}u_2\|_{L^2(\Gamma_{+}^c)}\right) = O(h^{-\frac{3}{2}}).
\]
Meanwhile, $e^{\frac{\ph}{h}}(u_1 - w)$ satisfies \eqref{BC}, so by Theorem \ref{MainCarl}, 
\begin{equation*}
\begin{split}
h^{\half}\|e^{\frac{\ph}{h}}(u_1 - w)\|_{L^2(\Gamma_{+}^c)} &\lesssim \| \L_{A_2,q_2,\ph} e^{\frac{\ph}{h}}(u_1 - w) \|_{H^{1*}(\Om)} \\
                                                                     &=        h^2 \| e^{\frac{\ph}{h}}(\L_{A_2,q_2} - \L_{A_1,q_1})u_1 \|_{H^{1*}(\Om)}, \\
\end{split}
\end{equation*} 
and the last line can be expanded as
\[
h^2 \| e^{\frac{\ph}{h}}(A_2^2 - A_1^2 +q_1 - q_2 + (A_1-A_2)\cdot D + D\cdot (A_2-A_1))u_1 \|_{H^{1*}(\Om)}.
\]

Since $u_1 = e^{\frac{-\ph + i\psi_1}{h}}(a_1 + r_1)$, this is $O(h)$.  Therefore
\[
\left| \int_{\Gamma^c} (u_1 -w) \overline{\nu \cdot (\grad + iA_2)u}_2 dS \right| = O(h^{-\half}).
\]
By using the explicit forms of $u_1$ and $u_2$, we can see that the first term on the right side of \eqref{FinalGreen} is $O(1)$.  Therefore multiplying \eqref{FinalGreen} by $h$ and taking the limit as $h \rightarrow 0$ gives
\[
\lim_{h \rightarrow 0} \int_{\Om} (A_1 - A_2)\cdot(u_1 h\overline{Du_2}+ hD u_1 \overline{u_2})dV = 0.
\]
Expanding using the expressions for $u_1$ and $u_2$, and applying the conditions on $a_1, a_2, r_1$ and $r_2$, we get
\[
\lim_{h \rightarrow 0} \int_{\Om} (A_1 - A_2) \cdot (\grad \ph - i\grad \psi) a_1 \overline{a_2} dV = 0.
\]
Now we are in the position of ~\cite{DKSjU}, and it follows by the arguments there that $dA_1 = dA_2$.  Then by a gauge transform we can assume $A_1 = A_2$, and so \eqref{FinalGreen} becomes
\[
\int_{\Gamma_{+}^c} (u_1 - w)\overline{\nu \cdot (\grad + iA_2)u}_2 dS = \int_{\Om} (q_1 - q_2)u_1 \overline{u_2} dV.
\]
Now the left side integral is $O(h^{\half})$, so we get
\[
\lim_{h \rightarrow 0} \int_{\Om} (q_1 - q_2)u_1 \overline{u_2} dV = 0,
\]
and as in ~\cite{DKSjU} we have enough information to conclude that $q_1 = q_2$ .  

\section{Operators}

We will now turn to the proof of Theorem \ref{LinearCarl}.  To begin, we will introduce the operators $J, J^{*}, J^{-1},$ and $J^{*-1}$.  Choose coordinates $(x,y)$ on $\RnI$ such that $x \in \Rn$ and $y \in \R$. Let $\RnIp$ denote the set $\{(x,y) \in \RnI| y > 0 \}$, and let $\Rno$ denote the boundary of $\RnIp$.  Let $\S(\RnIp)$ denote the set of restrictions to $\RnIp$ of Schwartz functions on $\RnI$.  Then for $u \in \S(\RnIp)$, let $\hat{u}$ indicate the semiclassical Fourier transform in the $x$-variables only.  Now let $F$ be a complex-valued function such that 
\begin{equation}\label{FBehaviour}
F(\xi), \mathrm{Re} F(\xi) \simeq  1 + |\xi|,
\end{equation}
and define the operators $J, J^{*}, J^{-1},$ and $J^{*-1}$ by
\begin{eqnarray*}
\widehat{Ju}(\xi,y)       &=& (F(\xi) + h\partial_y)\hat{u}(\xi,y) \\
\widehat{J^*u}(\xi,y)     &=& (\overline{F(\xi)} - h\partial_y)\hat{u}(\xi,y) \\
\widehat{J^{-1}u}(\xi,y)  &=& \frac{1}{h}\int_{0}^y \hat{u}(\xi,t) e^{F(\xi)\frac{t-y}{h}}dt \mbox{ and } \\
\widehat{J^{*-1}u}(\xi,y) &=& \frac{1}{h}\int_y^\infty \hat{u}(\xi,t) e^{\overline{F(\xi)}\frac{y-t}{h}}dt.
\end{eqnarray*}
These operators have the following boundedness properties.    

\begin{lemma}\label{bddness}
For $u \in \S(\RnIp)$,
\begin{eqnarray*}
\|Ju\|_{L^2(\RnIp)}       &\lesssim& \|u\|_{H^1(\RnIp)} \\
\|J^{*}u\|_{L^2(\RnIp)}   &\simeq&   \|u\|_{H^1(\RnIp)} \\
\|J^{-1}u\|_{H^1(\RnIp)}  &\simeq&   \|u\|_{L^2(\RnIp)} \mbox{ and }\\
\|J^{*-1}u\|_{H^1(\RnIp)} &\simeq&   \|u\|_{L^2(\RnIp)}. \\
\end{eqnarray*}
Moreover, if $u \in \S(\RnIp)$, and $u(x,0) = 0$ for all $x$, then
\[
\|Ju\|_{L^2(\RnIp)} \simeq \|u\|_{H^1(\RnIp)}.
\]
\end{lemma}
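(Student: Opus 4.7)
The plan is to apply the semiclassical Fourier transform in $x$ alone, use Plancherel in $\xi$, and reduce every estimate to a one-dimensional computation on $(0,\infty)$ in $y$. Writing $f(y) := \hat u(\xi, y)$ for fixed $\xi$, the two upper bounds $\|Ju\|_{L^2} \lesssim \|u\|_{H^1(\RnIp)}$ and $\|J^*u\|_{L^2} \lesssim \|u\|_{H^1(\RnIp)}$ are immediate from the triangle inequality together with $|F(\xi)| \lesssim 1 + |\xi|$.

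The core step is the matching lower bound for $J^*$. Expanding gives
\[
\int_0^\infty |(\overline F - h\partial_y)f|^2 \, dy = |F|^2\|f\|_{L^2}^2 + h^2\|f'\|_{L^2}^2 - 2\mathrm{Re}\Bigl(\overline F\, h \int_0^\infty f\,\overline{f'}\,dy\Bigr),
\]
and the identity $2\mathrm{Re}\int_0^\infty f\,\overline{f'}\,dy = -|f(0)|^2$ rewrites the cross term as $hF_1|f(0)|^2 - 2hF_2 \,\mathrm{Im}\int_0^\infty f\,\overline{f'}\,dy$, where $F = F_1 + iF_2$. The boundary contribution now carries the favourable sign; the remaining imaginary term must be absorbed into the two main pieces. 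The hypothesis $|F|, \mathrm{Re}\, F \simeq 1 + |\xi|$ forces $|F_2|/|F|$ to be bounded strictly below $1$, so an AM-GM inequality with an absorption parameter $\alpha$ chosen strictly between $|F_2|^2/|F|^2$ and $1$ leaves coefficients $|F|^2 - |F_2|^2/\alpha > 0$ and $1 - \alpha > 0$, both uniform in $\xi$. Integrating in $\xi$ yields $\|J^*u\|_{L^2}^2 \gtrsim \|u\|_{H^1}^2$.

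Repeating the same expansion for $J = F(D_x) + h\partial_y$ flips the sign of the boundary term to $-hF_1|f(0)|^2$; when $u(x,0) = 0$ this contribution vanishes and the lower bound $\|Ju\|_{L^2} \simeq \|u\|_{H^1}$ follows. For the inverses, differentiating under the integral in the definitions of $J^{-1}$ and $J^{*-1}$ shows directly that $JJ^{-1} = J^*J^{*-1} = \mathrm{id}$ on $\S(\RnIp)$ and that $J^{-1}u|_{y=0} = 0$ automatically. The equivalence $\|J^{*-1}u\|_{H^1} \simeq \|u\|_{L^2}$ is then the $J^*$ estimate applied to $J^{*-1}u$, while $\|J^{-1}u\|_{H^1} \simeq \|u\|_{L^2}$ is obtained by sandwiching $\|u\|_{L^2} = \|JJ^{-1}u\|_{L^2}$ between the upper bound and the vanishing-boundary lower bound for $J$.

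The main obstacle is the indefinite $F_2$ term in the expansion of $\|J^* f\|_{L^2}^2$: for real-valued $F$ the lower bound would be automatic, but the complex case requires the precise quantitative form of $\mathrm{Re}\,F \simeq |F|$ --- namely that $|F_2|/|F|$ is bounded strictly below $1$ uniformly in $\xi$ --- to make the AM-GM absorption close.
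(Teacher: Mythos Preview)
Your argument is correct and is essentially the same approach as the paper's: the paper defers the core lower bounds for $J^*$ (and for $J$ under the vanishing boundary condition) to Lemma~5.1 of \cite{Ch2}, whose proof is precisely the expansion plus integration-by-parts and absorption argument you carry out, and then derives the equivalences for $J^{-1}$ and $J^{*-1}$ from the identities $JJ^{-1}=\mathrm{id}$ and $J^{-1}J=\mathrm{id}$ on functions vanishing at $y=0$, just as you use $JJ^{-1}=J^*J^{*-1}=\mathrm{id}$ together with $J^{-1}u|_{y=0}=0$.
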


\begin{proof}
These inequalities follow from Lemma 5.1 in ~\cite{Ch2}, along with the facts that 
\[
J J^{-1} u = u
\]
for all $u \in \S(\RnIp)$ and
\[
J^{-1} J u = u
\]
for all $u \in \S(\RnIp)$ such that $u = 0$ at $y= 0$.

\end{proof}

In addition, if $F$ is a symbol of first order, so
\begin{equation}\label{FSymbol}
|\partial_{\xi}^{\alpha} F(\xi)| \lesssim C_{\alpha} (1 + |\xi|)^{1 - |\a|},
\end{equation}
then the operators defined above map $\S(\RnIp)$ to itself, and we have the following commutator properties, from Lemma 5.2 in ~\cite{Ch1}.
\begin{lemma}\label{CommProps}
Suppose $w \in \S(\RnIp)$ and $\chi \in \S(\RnIp)$.  Then
\[
\| J \chi w \|_{L^2(\RnIp)} \gtrsim \| \chi J w\|_{L^2(\RnIp)} - h\| w\|_{L^2(\RnIp)}.
\]
The constant in the $\gtrsim$ sign depends on $F$ and $\chi$, but not $h$.
\end{lemma}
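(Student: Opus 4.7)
The plan is to bound the commutator $[J,\chi]w := J(\chi w) - \chi(Jw)$ in $L^2(\RnIp)$ by a constant times $h\|w\|_{L^2(\RnIp)}$; the lemma then follows from the triangle inequality $\|J\chi w\|_{L^2(\RnIp)} \geq \|\chi J w\|_{L^2(\RnIp)} - \|[J,\chi]w\|_{L^2(\RnIp)}$. Writing $J = F(D_x) + h\partial_y$, where $F(D_x)$ denotes the semiclassical Fourier multiplier with symbol $F(\xi)$ in the $x$-variables alone, the commutator splits as $[J,\chi] = [F(D_x),\chi] + [h\partial_y,\chi]$, and I will treat the two pieces separately.

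The $y$-derivative piece is immediate: $[h\partial_y,\chi]w = h(\partial_y \chi) w$, and since $\chi \in \S(\RnIp)$ its $y$-derivative is uniformly bounded, so $\|[h\partial_y,\chi]w\|_{L^2(\RnIp)} \leq h\|\partial_y \chi\|_{L^\infty}\|w\|_{L^2(\RnIp)}$. For the more substantive piece $[F(D_x),\chi]$, I would appeal to the semiclassical pseudodifferential calculus in the $x$-variables, with $y$ treated as a parameter. Since $F(\xi)$ is a first order symbol by \eqref{FSymbol} and $\chi$ is Schwartz so that all its $x$-derivatives are controlled uniformly in $y$, the commutator $[F(D_x),\chi]$ is a semiclassical pseudodifferential operator whose asymptotic expansion begins with $\frac{h}{i}\nabla_\xi F(\xi)\cdot\nabla_x \chi(x,y)$, an order zero symbol carrying an explicit factor of $h$, with remainder terms of still higher order in $h$ paired with higher $x$-derivatives of $\chi$. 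Standard $L^2$ boundedness for order zero semiclassical pseudodifferential operators (Calder\'{o}n-Vaillancourt) then yields $\|[F(D_x),\chi]w\|_{L^2(\RnIp)} \lesssim h\|w\|_{L^2(\RnIp)}$, with constant depending on finitely many seminorms of $F$ and $\chi$ but not on $h$.

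Combining the two estimates gives $\|[J,\chi]w\|_{L^2(\RnIp)} \lesssim h\|w\|_{L^2(\RnIp)}$, which is the desired inequality. The main point to verify carefully is the uniformity in $h$ of the symbol expansion and the corresponding $L^2$ mapping property for $[F(D_x),\chi]$; this is routine since the symbol class of $F$ is $h$-independent and only an overall power of $h$ is being tracked through the stationary phase expansion of the composition. This reproduces Lemma 5.2 of \cite{Ch1} in the present half-space setting.
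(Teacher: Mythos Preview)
Your proposal is correct and follows essentially the same route as the paper. The paper does not spell out a proof here but cites Lemma~5.2 of \cite{Ch1} and records Lemma~\ref{flatops} precisely to handle the $y$-dependent symbol arising in $[F(D_x),\chi]$; your argument---splitting $[J,\chi]=[F(D_x),\chi]+h(\partial_y\chi)$, treating $y$ as a parameter in the semiclassical calculus for the first piece, and invoking $L^2$-boundedness of the resulting order-zero operator---is exactly this.
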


The proof requires the following operator fact, which we'll record here.  Let $m,k \in \mathbb{Z}$, with $m,k\geq 0$.  Suppose $a(x,\xi,y)$ are smooth functions on $\Rn \times \Rn \times \R$ that satisfy the bounds
\[
|\partial_x^\b \partial_\xi^{\a} \partial_y^j a(x,\xi,y)| \leq C_{\a,\b} (1 + |\xi|)^{m-|\a|}
\]
for all multiindices $\a$ and $\b$, and for $0 \leq j \leq k$.  In other words, each $\partial_y^j a(x,\xi,y)$ is a symbol on $\Rn$ of order $m$, with bounds uniform in $y$, for $0 \leq j \leq k$.  Then we can define an operator $A$ on Schwartz functions in $\RnI$ by applying the pseudodifferential operator on $\Rn$ with symbol $a(x,\xi,y)$, defined by the Kohn-Nirenberg quantization, to $f(x,y)$ for each fixed $y$.  More generally, we can also define operators $A_j$ on Schwartz functions in $\RnI$ by applying the pseudodifferential operator on $\Rn$ with symbol $\partial_y^j a(x,\xi,y)$ to $f(x,y)$ for each fixed $y$, for $1 \leq j \leq k$.  Then Lemma 5.2 from ~\cite{Ch2} is as follows.

\begin{lemma}\label{flatops}
If $A$ is as above, then $A$ extends to a bounded operator from $H^{k+m}(\RnI)$ to $H^k(\RnI)$.
\end{lemma}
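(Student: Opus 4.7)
The plan is to reduce the $(n{+}1)$-dimensional bound to a family of $n$-dimensional pseudodifferential bounds on horizontal slices $\{y=\mathrm{const}\}$, and then absorb the $y$-derivatives via the Leibniz rule. For the base case $k=0$: for each fixed $y$ the operator $A(y):=\mathrm{Op}_h(a(\cdot,\cdot,y))$ is a semiclassical pseudodifferential operator on $\Rn$ whose symbol lies in the standard symbol class of order $m$ with seminorms uniform in $y$ (by hypothesis) and in $h$. A routine rescaling, composing with $\langle hD_x\rangle^{-m}$ and invoking Calder\'on--Vaillancourt, shows that $A(y):H^m(\Rn)\to L^2(\Rn)$ is bounded uniformly in $y$ and in $h\le h_0$. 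Integrating the square of this slicewise bound in $y$ and applying Fubini gives $\|Af\|_{L^2(\RnI)}\lesssim\|f\|_{L^2_y H^m_x}\le\|f\|_{H^m(\RnI)}$.

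For general $k$, I would use the equivalent characterization $\|u\|_{H^k(\RnI)}^2\simeq\sum_{|\alpha|+j\le k}\|(h\partial_x)^\alpha(h\partial_y)^j u\|_{L^2}^2$ and bound each summand applied to $Af$. The Leibniz rule in $y$, applied to both the symbol and the function, yields
\[
(h\partial_y)^j(Af) \;=\; \sum_{i=0}^{j}\binom{j}{i}\,h^{i}\,A_i\,(h\partial_y)^{j-i}f,
\]
where $A_i$ is the slice operator with symbol $\partial_y^{i}a$. By hypothesis this is a symbol of order $m$ with bounds uniform in $y$ for each relevant $i\le j\le k$. Composing with $(h\partial_x)^\alpha$, the operator $(h\partial_x)^\alpha A_i$ is, for each fixed $y$, a semiclassical pseudodifferential operator in $x$ of order $m+|\alpha|$ whose symbol equals $(i\xi)^\alpha\partial_y^{i}a(x,\xi,y)$ up to semiclassical lower-order corrections, with seminorms uniform in $y$ and $h$. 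Applying the $k=0$ slicewise bound to this operator then gives
\[
\|(h\partial_x)^\alpha A_i(h\partial_y)^{j-i}f\|_{L^2(\RnI)} \;\lesssim\; \|(h\partial_y)^{j-i}f\|_{L^2_y H^{m+|\alpha|}_x} \;\lesssim\; \|f\|_{H^{k+m}(\RnI)},
\]
since $|\alpha|+(j-i)\le|\alpha|+j\le k$. Summing over the finitely many triples $(\alpha,j,i)$ and using $h^{i}\lesssim 1$ yields the claimed bound on Schwartz functions; a density argument extends it to all of $H^{k+m}(\RnI)$.

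The main obstacle is the semiclassical pseudodifferential bookkeeping: verifying that the slice symbols $\partial_y^{i}a$ and the composed symbols $(i\xi)^\alpha\partial_y^{i}a$ (together with their semiclassical lower-order corrections) have seminorms controlled independently of $y$ and $h$, and that the resulting slicewise $L^2$-operator norms depend only on a fixed finite list of these seminorms. Once these uniformities are established, the remainder of the argument is a clean combination of Fubini, Leibniz, and slicewise pseudodifferential boundedness.
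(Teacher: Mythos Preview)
Your argument is correct. The paper itself does not supply a proof of this lemma; it simply records the statement and cites it as Lemma~5.2 of \cite{Ch2}. So there is no in-paper proof to compare against. Your slice-by-slice approach---uniform Calder\'on--Vaillancourt bounds on horizontal slices, Fubini for the $k=0$ case, then the Leibniz rule in $y$ together with the hypothesis that $\partial_y^i a$ remains an order-$m$ symbol for $0\le i\le k$---is the standard way to prove such a result and is presumably close to what \cite{Ch2} does. The only bookkeeping point worth noting explicitly is that since $(h\partial_x)^\alpha$ has symbol depending on $\xi$ alone, the Leibniz expansion of $(h\partial_x)^\alpha\bigl(e^{ix\cdot\xi/h}\partial_y^i a\bigr)$ gives an exact finite sum whose terms are symbols of order $\le m+|\alpha|$ with $h$-uniform seminorms, so no asymptotic-expansion subtleties arise.
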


Now for something original.  Suppose $F$ satisfies \eqref{FBehaviour}, and define the operator $P$ by 
\[
\widehat{Pu}(\xi,y) = \hat{u}(\xi,0)e^{-\frac{F(\xi)y}{h}}
\]
for $u \in \S(\RnIp)$.  Then $P$ maps $\S(\RnIp)$ to itself, and a simple integral calculation shows that 
\begin{equation}\label{PL2bound}
\|Pu\|_{L^2(\RnIp)} \lesssim h^{\half} \|u\|_{H^{-\half}(\Rno)}
\end{equation}
and
\begin{equation}\label{PH1bound}
\|Pu\|_{H^1(\RnIp)} \simeq h^{\half} \|u\|_{H^{\half}(\Rno)} \lesssim \|u\|_{H^1(\RnIp)}.
\end{equation}
Moreover, note that $JPu = 0$. 

We have the following lemma.  

\begin{lemma}\label{H1star}
Suppose $u \in \S(\RnIp)$.  Let $H^{1*}(\RnIp)$ be the dual space of $H^1(\RnIp)$.  Then
\[
\|J^{*}u\|_{H^{1*}(\RnIp)} \simeq \|u\|_{L^2(\RnIp)} + h^{\half} \|u\|_{H^{-\half}(\Rno)}.
\]
In addition, if $E$ is any first order differential operator or $E = J$, then
\[
\|Eu\|_{H^{1*}(\RnIp)} \lesssim \|u\|_{L^2(\RnIp)} + h^{\half}\|u\|_{H^{-\half}(\Rno)}.
\]
\end{lemma}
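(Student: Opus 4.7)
The plan is to prove both bounds by a duality argument, writing $\|Eu\|_{H^{1*}(\RnIp)} = \sup_{\|v\|_{H^1(\RnIp)} \le 1} |\langle Eu, v\rangle|$, and combining it with a single integration by parts in the $y$-variable to expose a boundary term on $\Rno$ carrying a factor of $h$. For $E = J^*$, this integration by parts gives
\[
\langle J^* u, v\rangle_{\RnIp} = \langle u, J v\rangle_{\RnIp} + h \langle u|_{\Rno}, v|_{\Rno}\rangle_{\Rno},
\]
and the upper bound on $\|J^*u\|_{H^{1*}}$ is then immediate from Lemma \ref{bddness} ($\|Jv\|_{L^2} \lesssim \|v\|_{H^1}$) together with the semiclassical trace estimate $\|v|_{\Rno}\|_{H^{\half}(\Rno)} \lesssim h^{-\half}\|v\|_{H^1}$. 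The latter is proved directly by writing
\[
(1+h^2|\xi|^2)^{\half} |\hat v(\xi,0)|^2 = -2\,\mathrm{Re}\int_0^\infty (1+h^2|\xi|^2)^{\half} \hat v\,\overline{\partial_y \hat v}\,dy
\]
and applying the weighted estimate $2ab \le h^{-1} a^2 + h b^2$ to the integrand before integrating in $\xi$; the resulting scaling is also consistent with \eqref{PH1bound}.

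For the lower bound I would evaluate the pairing on two carefully chosen test functions that separately isolate the two contributions. Taking $v = J^{-1}u$, the explicit formula forces $v|_{\Rno}=0$, the boundary term drops out, and $JJ^{-1}u = u$ together with $\|J^{-1}u\|_{H^1}\simeq \|u\|_{L^2}$ from Lemma \ref{bddness} yields
\[
\|u\|_{L^2}^2 = \langle J^*u, J^{-1}u\rangle \leq \|J^*u\|_{H^{1*}} \|J^{-1}u\|_{H^1} \simeq \|J^*u\|_{H^{1*}} \|u\|_{L^2}.
\]
Taking instead $v = Pw$ for $w\in\S(\Rno)$, the identity $JPw=0$ noted just before the lemma kills the interior pairing, while $(Pw)|_{\Rno} = w$ and \eqref{PH1bound} give $\|Pw\|_{H^1}\simeq h^{\half}\|w\|_{H^{\half}(\Rno)}$; hence
\[
h|\langle u|_{\Rno}, w\rangle_{\Rno}| = |\langle J^*u, Pw\rangle| \lesssim h^{\half}\|J^*u\|_{H^{1*}}\|w\|_{H^{\half}(\Rno)},
\]
and taking the supremum over $w \in \S(\Rno)$ delivers the boundary bound $h^{\half} \|u\|_{H^{-\half}(\Rno)} \lesssim \|J^*u\|_{H^{1*}}$.

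For the second statement, the case $E=J$ is dual to the case just handled: integration by parts yields $\langle Ju,v\rangle = \langle u, J^*v\rangle - h\langle u|_{\Rno}, v|_{\Rno}\rangle_{\Rno}$, and the upper bound follows from $\|J^*v\|_{L^2}\simeq \|v\|_{H^1}$ (Lemma \ref{bddness}) together with the same trace estimate. For a general first order differential operator $E$ with bounded coefficients, integration by parts in $y$ produces a formal transpose $E^\top$ that is again a first order operator with bounded coefficients, together with a boundary term of size $O(h)$, and the same Cauchy-Schwarz finishes the bound. I do not expect any serious obstacle; the only subtleties are the correct $h^{\half}$ bookkeeping arising from the semiclassical trace, and the clean choice of $J^{-1}u$ and $Pw$ as the two dual test functions that separate the interior $L^2$ and boundary $H^{-\half}$ contributions to $\|J^*u\|_{H^{1*}}$.
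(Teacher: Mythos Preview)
Your proof is correct and follows essentially the same approach as the paper: both use the duality formula $(J^*u,v) = (u,Jv) + h(u,v)_{\Rno}$, test against $v=J^{-1}u$ to isolate the interior term, and test against a $P$-extension to isolate the boundary term. The only cosmetic difference is that for the boundary lower bound the paper plugs in the single optimizer $v = T_{F^{-1}}Pu$ and computes directly, whereas you test against $v=Pw$ for arbitrary $w$ and take a supremum; these are equivalent realizations of the same idea.
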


\begin{proof}
First,
\begin{equation}\label{DualityCalc}
\begin{split}
\|J^{*}u\|_{H^{1*}(\RnIp)} &= \sup_{v \in H^1(\RnIp) \neq 0} \frac{|(J^{*}u,v)|}{\|v\|_{H^1(\RnIp)}} \\
                           &= \sup_{v \in H^1(\RnIp) \neq 0} \frac{|(u,Jv) + h(u,v)_{\Rno}|}{\|v\|_{H^1(\RnIp)}} \\
\end{split}                          
\end{equation}
Now choosing $v = J^{-1} u$ gives
\[
\|J^{*}u\|_{H^{1*}(\RnIp)} \gtrsim \frac{\|u\|_{L^2(\RnIp)}^2}{\|J^{-1} u\|_{H^1(\RnIp)}},
\]
with the boundary term vanishing, since $J^{-1} u = 0$ on the boundary.  Using the boundedness properties now gives us 
\[
\|J^{*}u\|_{H^{1*}(\RnIp)} \gtrsim \|u\|_{L^2(\RnIp)}.
\]
On the other hand, if $u|_{\Rno} \neq 0$, we can go back to \eqref{DualityCalc} and choose $v$ defined by
\[
\hat{v} = \frac{1}{F(\xi)}\widehat{Pu}.
\]
Using the notation $T_{\psi}$ to denote the operator defined by the Fourier multiplier $\psi$, 
\[
v = T_{F^{-1}}Pu = PT_{F^{-1}}u.
\]
Now
\[
\|J^{*}u\|_{H^{1*}(\RnIp)} \gtrsim \frac{h\|T_{F^{-\half}}u\|^2_{L^2(\Rno)}}{\|PT_{F^{-1}}u\|_{H^1(\RnIp)}};
\]
here the non-boundary term disappears since $JP = 0$.  Now the boundedness properties of $P$ and $T_{F^{-\half}}$ give us
\begin{eqnarray*}
\|J^{*}u\|_{H^{1*}(\RnIp)} &\gtrsim& \frac{h\|u\|^2_{H^{-\half}(\Rno)}}{h^{\half}\|T_{F^{-1}}u\|_{H^{\half}(\Rno)}} \\
                           &\simeq&  h^{\half}\|u\|_{H^{-\half}(\Rno)}.
\end{eqnarray*}
Now if $E = J,J^{*},$ or any other first order differential operator, then the argument used in \eqref{DualityCalc} gives us that 
\begin{eqnarray*}
\|Eu\|_{H^{1*}(\RnIp)} &\leq&     \sup_{v \in H^1(\RnIp) \neq 0} \frac{|(u,Ev) + Ch(u,v)_{\Rno}|}{\|v\|_{H^1(\RnIp)}} \\
                       &\lesssim& \|u\|_{L^2(\RnIp)} + h\frac{\|u\|_{H^{-\half}(\Rno)}\|v\|_{H^{\half}(\Rno)}}{\|v\|_{H^1(\RnIp)}} \\
                       &\lesssim& \|u\|_{L^2(\RnIp)} + h^{\half}\|u\|_{H^{-\half}(\Rno)}. \\
\end{eqnarray*}

\end{proof}

\section{Proof of Theorem \ref{LinearCarl}}

This section is devoted to the proof of Theorem \ref{LinearCarl}.  For the rest of this section, $\ph$ is assumed to be as in the statement of that theorem.  Now we may as well choose coordinates $(x,y)$ as in the previous section, so $x \in \Rn$, $y \in \R$, and $\ph(x,y) = y$.

To prove the Carleman estimate, we will need to work with the convexified Carleman weights, as in ~\cite{DKSjU} and ~\cite{Ch1}.  Let 
\[
\ph_c = \ph + \frac{h}{2\e}\ph^2,
\]
and define 
\[
\Lphe = h^2 e^{\frac{\ph_c}{h}}\Lap e^{-\frac{\ph_c}{h}}
\]
and
\[
\Lphaqe = h^2 e^{\frac{\ph_c}{h}}\Laq e^{-\frac{\ph_c}{h}}.
\]
Now suppose $\Om_2 $ is a smooth bounded domain in $\RnIp$ such that $\Om \subset \subset \Om_2$ as subsets of $\RnIp$, and $\Gamma_{+}^c \subset \partial \Om_2 $.  Let $\Gamma_{2+} \subset \partial \Om_2$ such that $\Gamma_{+}^c \subset \Gamma_{2+}^c$.  We have the following proposition rewritten from Theorem 1.3 of ~\cite{Ch2}.

\begin{prop}\label{NDCarl}
Suppose $w \in H^2(\Om_2)$, such that 
\begin{equation}\label{BC2}
\begin{split}
w, \partial_{\nu} w &= 0 \mbox{ on } \Gamma_{2+} \\
(h\partial_{\nu} - \partial_{\nu} \ph)w  &= h\sigma w \mbox{ on } \Gamma_{2+}^c. \\
\end{split}
\end{equation}
where $\sigma$ is an order zero operator bounded uniformly in $h$.  Then
\[
h^{\half}\|w\|_{H^1(\Gamma_{2+}^c)} + \frac{h}{\sqrt{\varepsilon}}\|w\|_{H^1(\Om_2)} \lesssim \| \Lphe w\|_{L^2(\Om_2)}.
\]
\end{prop}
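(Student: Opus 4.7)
Since the statement is explicitly advertised as a rewriting of Theorem 1.3 of \cite{Ch2}, my plan is to reduce it to that theorem rather than to reprove everything from scratch. First I would check that the hypotheses line up: the boundary conditions \eqref{BC2} are precisely the Neumann-Dirichlet boundary conditions used in \cite{Ch2}, simply rewritten in terms of $w$ rather than $e^{-\ph/h}w$. Concretely, if $v = e^{-\ph/h}w$ on $\partial\Om_2$, then $h\partial_\nu v = e^{-\ph/h}(h\partial_\nu w - (\partial_\nu\ph)w)$, and so \eqref{BC2} is equivalent to $v, \partial_\nu v = 0$ on $\Gamma_{2+}$ together with $h\partial_\nu v = h\sigma v$ on $\Gamma_{2+}^c$, which is the configuration needed to apply the Carleman estimate of \cite{Ch2} on the enlarged domain $\Om_2$.

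If I had to carry out the estimate itself, the strategy would be the standard convexified Carleman scheme. Decompose
\[
\Lphe = A + iB,
\]
where $A$ is formally self-adjoint and contains the terms $-h^2\Lap - |\grad\ph_c|^2$, and $B$ is formally skew-adjoint and contains the first-order part $-2i\grad\ph_c\cdot h\grad - ih\Lap\ph_c$. Then expand $\|\Lphe w\|^2_{L^2(\Om_2)}$ as $\|Aw\|^2 + \|Bw\|^2 + i([A,B]w,w)$ modulo boundary terms. The role of the convexification $\ph_c = \ph + \frac{h}{2\varepsilon}\ph^2$ is that the positive commutator $i[A,B]$ picks up an extra contribution of size $h^3/\varepsilon$ from the Hessian of $\ph_c$; since $\ph(x,y)=y$, this Hessian is simply $(h/\varepsilon)\,\mathrm{diag}(0,\dots,0,1)$ on the diagonal in the tangential directions is absent, so the tangential bound has to be extracted after integration by parts in $y$. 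This gives the bulk term $\frac{h}{\sqrt{\varepsilon}}\|w\|_{H^1(\Om_2)}$.

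Next I would track the boundary contributions coming from integration by parts. On $\Gamma_{2+}$ both $w$ and $\partial_\nu w$ vanish, so the boundary terms there are zero. On $\Gamma_{2+}^c$, the sign condition $\partial_\nu\ph \leq 0$ together with the relation $(h\partial_\nu - \partial_\nu\ph)w = h\sigma w$ produces a boundary integral whose leading-order part is essentially $-\int_{\Gamma_{2+}^c}\partial_\nu\ph\,|hw|_{H^1}^2$ (with the $H^1$ tangential gradient entering through the first-order part of $B$), and has the correct sign for absorption. The linear error term $h\sigma w$ is of lower order and can be absorbed into the bulk estimate for $h$ small. Since we work with $w\in H^2(\Om_2)$ these integrations by parts are fully justified, and to upgrade from $L^2$ to $H^1$ control on $w$ one uses the elliptic nature of $\Lphe$ together with a standard bootstrap.

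The main technical nuisance, and the reason the proposition is stated separately rather than just invoked, is the enlargement from $\Om$ to $\Om_2$ and the corresponding enlargement of the boundary region $\Gamma_+^c\subset\Gamma_{2+}^c$: one has to make sure that the boundary splitting used in \cite{Ch2} respects the new geometry and that the weight $\ph(x,y)=y$ still satisfies the sign assumption $\partial_\nu\ph\leq 0$ on $\Gamma_{2+}^c$ in $\Om_2$. Once this geometric compatibility is checked, the estimate follows directly from \cite{Ch2}, and the proof amounts to this verification.
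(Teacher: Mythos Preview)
Your proposal is correct and matches the paper's approach: the paper gives no proof at all for this proposition, simply stating that it is ``rewritten from Theorem 1.3 of \cite{Ch2}.'' Your reduction to that theorem, together with the verification that the boundary conditions \eqref{BC2} are equivalent to those in \cite{Ch2} and that the enlarged domain $\Om_2$ with $\Gamma_{2+}$ is admissible, is exactly what is needed; the additional sketch of the convexified commutator argument is more detail than the paper itself provides, but it is a faithful outline of how the estimate in \cite{Ch2} is obtained.
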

This will be the starting point for the proof of Theorem \ref{LinearCarl}.  It is essentially the estimate we want, but we need to shift the indices down in each Sobolev space that appears in the estimate, without disturbing the boundary term.  

\subsection{The Flat Case}

To illustrate the idea of the proof, we will first sketch the proof in the case where $\Gamma_{+}^c$ lies in the plane $y = 0$.  Then the boundary conditions on $\Gamma_{+}^c$ become
\begin{equation}\label{FlatBC}
h\partial_{y}w = w + h\sigma w.
\end{equation}
Suppose $F$ satisfies \eqref{FBehaviour} and \eqref{FSymbol}, and define $J$, $P$, and the related operators as in Section 3.  Now if $w$ satisfies \eqref{FlatBC}, then the first thing to notice is that $(J^{-1} + T_{(1 + F(\xi))^{-1}}P)w$ does as well.  If we were to define $Q$ to be the operator $(J^{-1} + T_{(1 + F(\xi))^{-1}}P)$, then $\chi Qw$ satisfies \eqref{BC2} for some appropriate cutoff function $\chi$.  By applying Proposition \ref{NDCarl} to $\chi Qw$, and use the commutator and boundedness results from Section 3, we can get 
\[
h^{\half}\|w\|_{L^2(\Rno)} + \frac{h}{\sqrt{\varepsilon}}\|w\|_{L^2(\RnIp)} \lesssim \| \Lphe Q w\|_{L^2(\RnIp)}.
\]
If it were generally true that 
\begin{equation}\label{hopesndreams}
\|v\|_{L^2(\RnIp)} \lesssim \|Jv\|_{H^{1*}(\RnIp)},
\end{equation}
then we would have
\[
h^{\half}\|w\|_{L^2(\Rno)} + \frac{h}{\sqrt{\varepsilon}}\|w\|_{L^2(\RnIp)} \lesssim \|J\Lphe Q w\|_{H^{1*}(\RnIp)}.
\]
Then by using commutator properties for $J$, together with the fact that $JQw = w$, we would have that 
\[
h^{\half}\|w\|_{L^2(\Rno)} + \frac{h}{\sqrt{\varepsilon}}\|w\|_{L^2(\RnIp)} \lesssim \|\Lphe w\|_{H^{1*}(\RnIp)},
\]
and from here the remainder of the proof would be simple.  Unfortunately \eqref{hopesndreams} is not true in general.  However, as in ~\cite{Ch1}, we can hope to prove that it holds for $v$ of the form $\Lphe Q w$.  The reason for this is that $\Lph$ factors as
\[
\Lph = (h\partial_y - T_{1 + |\xi|})(h\partial_y - T_{1 - |\xi|}).
\]
Thus if $F$ is chosen well, $\Lphe$ can be factored as $J^{*}B$, where $B$ looks like $h\partial_y - T_{1 - |\xi|}$, up to appropriate error.  Now for $v$ of the form $J^* BQw$,
\[
\|Jv\|_{H^{1*}(\RnIp)} = \|JJ^* BQw \|_{H^{1*}(\RnIp)}. 
\]
$J$ and $J^*$ commute, and so by Lemma \ref{H1star},
\[
\|Jv\|_{H^{1*}(\RnIp)} \gtrsim \|JBQw \|_{L^2(\RnIp)}.
\]
Now since $JP = 0$, we can write this as 
\[
\|Jv\|_{H^{1*}(\RnIp)} \gtrsim \|J(BQw - PBQw) \|_{L^2(\RnIp)}.
\]
Then $BQw - PBQw = 0$ on $\Rno$, so by Lemma \ref{bddness}, 
\begin{eqnarray*}
\|Jv\|_{H^{1*}(\RnIp)} &\gtrsim& \|(BQw - PBQw) \|_{H^1(\RnIp)} \\
                           &\gtrsim& \|BQw\|_{H^1(\RnIp)} - \|PBQw\|_{H^{1}(\RnIp)} \\
                           &\simeq&  \|J^* BQw\|_{L^2(\RnIp)} - h^{\half}\|BQw\|_{H^{\half}(\Rno)}. \\
\end{eqnarray*}
Therefore
\begin{equation}\label{WithBndryTerm}                        
\|Jv\|_{H^{1*}(\RnIp)} =  \|v\|_{L^2(\RnIp)} - h^{\half}\|BQw\|_{H^{\half}(\Rno)}. \\
\end{equation}
On the other hand, by Lemma 3.4 
\[
\|Jv\|_{H^{1*}(\RnIp)}  = \|J^* JBQw \|_{H^{1*}(\RnIp)} \gtrsim h^{\half} \|JBQw\|_{H^{-\half}(\Rno)}.
\]
Up to acceptable error, $J$ and $B$ commute, and $JQ$ is the identity.  Therefore
\[
\|Jv\|_{H^{1*}(\RnIp)} \gtrsim h^{\half} \|Bw\|_{H^{-\half}(\Rno)}.
\]
Now one can check that if $B = h\partial_y - T_{1 - |\xi|}$, and $w$ satisfies the boundary condition \eqref{FlatBC}, then 
\[
h^{\half} \|Bw\|_{H^{-\half}(\Rno)} \simeq h^{\half}\|BQw\|_{H^{\half}(\Rno)}.
\]
Combining this with the previous inequality and substituting into \eqref{WithBndryTerm} shows that \eqref{hopesndreams} holds in this case.  This finishes the sketch of the proof.  In reality, everything is much more complicated.  To begin, we will need to do a change of variables to be able to work with a flat boundary.  This changes $\Lphe$ and the boundary condition somewhat.  Then the factoring becomes much more complicated, and as in ~\cite{Ch1}, we will have to break things into a small frequency case and a large frequency case and prove things separately in each case.     

\subsection{A Graph Case}
We will begin the proof of Theorem \ref{LinearCarl} by considering the special case in which $\Gamma_{+}^c$ coincides with a graph of the form $y = f(x)$ where $f$ is a smooth function with some constant vector $K \in \Rn$ such that $|\grad f - K| \leq \delta$, for some small $\delta > 0$ to be chosen later.  Then we can ask that $\Gamma_{2+}^c$ satisfies the same graph conditions.  

In this case we'll do a change of variables $(x,y) \mapsto (x, y - f(x))$ to flatten out the graph.  

Let $\tilde{\Om}_2$ and $\tilde{\Gamma}_{2+}$ be the images of $\Om_2$ and $\Gamma_{2+}$ respectively, under this map.  Note that $\Gamma_{2+} \subset \Rno$. Then we have the following proposition.

\begin{prop}\label{CofVCarl}
Suppose $w \in H^2(\tilde{\Om}_2)$, and  
\begin{equation}\label{tildeBC}
\begin{split}
w, \partial_{\nu} w &= 0 \mbox{ on } \tilde{\Gamma}_{2+} \\
h\partial_{y} w   &= \frac{w + \grad f \cdot h\grad w}{1 + |\grad f|^2} +h\sigma w \mbox{ on } \tilde{\Gamma}_{2+}^c. \\
\end{split}
\end{equation}
Then
\begin{equation*}
h^{\half} \|w\|_{H^1(\tilde{\Gamma}_{2+}^c)} + \frac{h}{\sqrt{\e}}\|w\|_{H^1(\tilde{\Om}_2)} \lesssim \| \Lphet w \|_{L^2(\tilde{\Om}_2)},
\end{equation*}
where
\[
\Lphet = (1+|\grad f|^2)h^2\partial_y^2 - 2(\a + \grad f \cdot h\grad_x)h\partial_y + \a^2 + h^2\Lap_x
\]
and $\a = (1 + \frac{h}{\e}(y + f(x))$.
\end{prop}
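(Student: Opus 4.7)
The plan is to reduce this proposition to Proposition \ref{NDCarl} by pulling everything back along the shear diffeomorphism $\Phi(x,y) := (x, y - f(x))$, which maps $\Om_2$ onto $\tilde\Om_2$ and $\Gamma_{2+}$ onto $\tilde\Gamma_{2+}$. Given $w \in H^2(\tilde\Om_2)$ satisfying \eqref{tildeBC}, I would set $v := w \circ \Phi \in H^2(\Om_2)$. Because $\Phi$ is a smooth diffeomorphism with unit Jacobian and derivatives bounded in terms of $\|f\|_{C^2}$, the Sobolev norms of $v$ on $\Om_2$ (and on $\Gamma_{2+}^c$) are comparable to those of $w$ on $\tilde\Om_2$ (and on $\tilde\Gamma_{2+}^c$) up to constants.

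The first real work is the chain-rule identity for the conjugated operator. The relations $\partial_y v = \partial_{\tilde y} w$ and $\partial_{x_i} v = \partial_{\tilde x_i} w - \partial_i f \cdot \partial_{\tilde y} w$ yield
\[
\Delta v = \bigl(\Delta_{\tilde x} - 2\grad f \cdot \grad_{\tilde x}\partial_{\tilde y} - \Delta f\,\partial_{\tilde y} + (1+|\grad f|^2)\partial_{\tilde y}^2\bigr)w,
\]
evaluated at $\Phi(x,y)$. Writing $\tilde\ph_c := \ph_c \circ \Phi^{-1}$, one has $\partial_{\tilde y}\tilde\ph_c = 1 + (h/\e)(\tilde y + f(\tilde x)) = \a$ and $\partial_{\tilde x_i}\tilde\ph_c = \a\,\partial_i f$, so the conjugation by $e^{\tilde\ph_c/h}$ sends $h\partial_{\tilde y} \mapsto h\partial_{\tilde y} - \a$ and $h\partial_{\tilde x_i}\mapsto h\partial_{\tilde x_i} - \a\,\partial_i f$. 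A tedious but routine collection of terms then gives
\[
(\Lphe v)(x,y) = \bigl(\Lphet w + Ew\bigr)(\Phi(x,y)),
\]
where $\Lphet$ is exactly the operator of the statement and $Ew = -h\,\Delta f\,h\partial_{\tilde y} w - (h^2/\e)\,w$ is an error carrying an extra factor of $h$.

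Next I would translate the boundary condition. Locally $\Gamma_{2+}^c$ is the graph $y = f(x)$ with outward unit normal $\nu = (\grad f, -1)/\sqrt{1+|\grad f|^2}$ and $\partial_\nu\ph = -1/\sqrt{1+|\grad f|^2}$; the chain rule gives $\partial_\nu v = (\grad f \cdot \grad_{\tilde x} w - \partial_{\tilde y} w)/\sqrt{1+|\grad f|^2}$ on $\tilde\Gamma_{2+}^c$. Substituting into $(h\partial_\nu - \partial_\nu\ph)v = h\sigma v$, clearing the $\sqrt{1+|\grad f|^2}$ factor, and solving for $h\partial_{\tilde y}w$ produces exactly the Robin-type condition
\[
h\partial_{\tilde y}w = \frac{w + \grad f \cdot h\grad_{\tilde x}w}{1+|\grad f|^2} + h\tilde\sigma w
\]
of \eqref{tildeBC}, with $\tilde\sigma$ a new bounded zero-order operator; the Dirichlet/Neumann vanishing on $\Gamma_{2+}$ pulls back to vanishing on $\tilde\Gamma_{2+}$ in the same way. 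Hence $v$ satisfies \eqref{BC2} iff $w$ satisfies \eqref{tildeBC}.

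Applying Proposition \ref{NDCarl} to $v$ and pushing norms back through $\Phi$ then yields
\[
h^{\half}\|w\|_{H^1(\tilde\Gamma_{2+}^c)} + \frac{h}{\sqrt\e}\|w\|_{H^1(\tilde\Om_2)} \lesssim \|\Lphet w + Ew\|_{L^2(\tilde\Om_2)}.
\]
Since $\|Ew\|_{L^2(\tilde\Om_2)} \lesssim h\|w\|_{H^1(\tilde\Om_2)} + (h^2/\e)\|w\|_{L^2(\tilde\Om_2)}$, and the left-hand side carries a coefficient $h/\sqrt\e \gg h$, the error is absorbed into the left once $\e$ and then $h$ are chosen sufficiently small, giving the claimed estimate. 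The main technical obstacle is the bookkeeping in paragraph two: verifying that after the chain-rule expansion of $\Delta v$ and the subsequent conjugation by $e^{\tilde\ph_c/h}$, the miscellaneous cross-terms and $\partial_{\tilde x_i}\a$, $\partial_{\tilde y}\a$ corrections really do combine to give $\Lphet$ on the nose plus an error of one order smaller in $h$, rather than something that obstructs absorption by the $1/\sqrt\e$ gain.
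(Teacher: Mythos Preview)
Your approach is correct and essentially identical to the paper's: pull $w$ back to $\Om_2$ via the shear map, verify the resulting $v$ satisfies \eqref{BC2}, apply Proposition~\ref{NDCarl}, transfer norms back through the diffeomorphism, and absorb the $O(h)$ first-order error coming from the change of variables into the $h/\sqrt{\e}$ term on the left. One small slip to fix: your displayed formula for $\partial_\nu v$ should read $(\grad f\cdot\grad_{\tilde x}w-(1+|\grad f|^2)\partial_{\tilde y}w)/\sqrt{1+|\grad f|^2}$, not $(\grad f\cdot\grad_{\tilde x}w-\partial_{\tilde y}w)/\sqrt{1+|\grad f|^2}$; with the corrected expression the substitution into $(h\partial_\nu-\partial_\nu\ph)v=h\sigma v$ indeed yields \eqref{tildeBC} as you claim.
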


\begin{proof}
Suppose $w \in H^2(\Omt)$ satisfies \eqref{tildeBC}.  Let $v$ be the function on $\Om$ defined by $v(x,y) = w(x, y + f(x))$.  Then $v$ satisfies \eqref{BC2}, and thus
\[
h^{\half} \|v\|_{H^1(\Gamma_{2+}^c)} + \frac{h}{\sqrt{\e}}\|v\|_{H^1(\Om_2)} \lesssim \| \Lphe v \|_{L^2(\Om_2)}.
\]
By a change of variables, $\|v\|_{H^1(\Om_2)} \simeq \|w\|_{H^1(\tilde{\Om}_2)}$ and $\|v\|_{H^1(\Gamma_{2+}^c)} \simeq \|w\|_{H^1(\tilde{\Gamma}_{2+}^c)}$. Moreover, 
\[
\left( \Lphe v \right)(x, y - f(x)) = \Lphet \left( w(x,y) \right) + hE_1 w(x,y)
\]
where $E_1$ is a first order semiclassical differential operator.  Thus by another change of variables,
\[
\|\Lphe v \|_{L^2(\Om_2)} \lesssim \|\Lphet w \|_{L^2(\Omt_2)} + h\|w\|_{H^1(\Omt_2)}.
\]
Putting this together gives
\[
h^{\half} \|w\|_{H^1(\tilde{\Gamma}_{2+}^c)} + \frac{h}{\sqrt{\e}}\|w\|_{H^1(\tilde{\Om}_2)} \lesssim \| \Lphet w \|_{L^2(\tilde{\Om}_2)}+h\|w\|_{H^1(\Omt_2)},
\]
and the last term on the right side can be absorbed into the left side to finish the proof.
\end{proof}

Now having changed variables, we will shift Sobolev spaces in the Carleman estimate above.

\begin{prop}\label{FlatHMinus1}
Suppose $w \in H^1(\tilde{\Om})$ and $|\grad f - K| \leq \delta$, and 
\begin{equation}\label{tildeBC2}
\begin{split}
w, \partial_{\nu} w &= 0 \mbox{ on } \tilde{\Gamma}_{2+} \\
h\partial_{y} w   &= \frac{w + \grad f \cdot h\grad w}{1 + |\grad f|^2} + h\sigma w \mbox{ on } \tilde{\Gamma}_{2+}^c. \\
\end{split}
\end{equation}
For sufficiently small $\delta$, there exists $h_0 > 0$ such that if $0<h<h_0$, then
\begin{equation}\label{FlatHMinus1Carl}
h^{\half} \|w\|_{L^2(\tilde{\Gamma}_{+}^c)} + \frac{h}{\sqrt{\e}}\|w\|_{L^2(\tilde{\Om})} \lesssim \| \Lphet w \|_{H^{1*}(\tilde{\Om})}.
\end{equation}
\end{prop}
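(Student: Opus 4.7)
The strategy is to apply Proposition~\ref{CofVCarl} to $\chi Q w$, where $\chi \in \Czinf$ equals $1$ on $\tilde{\Om}$ and is supported in $\tilde{\Om}_2$, and $Q = J^{-1} + RP$ is a regularizing operator built so that (i) $JQ w = w$ on $\tilde{\Om}$, (ii) $Q : L^2 \to H^1$ is bounded by Lemma~\ref{bddness}, and (iii) $Q w$ satisfies the simpler boundary condition \eqref{BC2} for $\Lphet$ on $\tilde{\Gamma}_{2+}^c$, up to a zeroth-order perturbation of $h\s$. The one-degree gain of regularity from $Q$ is exactly what shifts both sides of the Carleman estimate down by one in the Sobolev scale.

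To construct $Q$, I would choose the symbol $F(\xi)$ satisfying \eqref{FBehaviour} and \eqref{FSymbol} so that two requirements are met at once. Taking the semiclassical Fourier transform in $x$ of \eqref{tildeBC2} and freezing $\grad f = K$ turns the boundary condition into $h\pd_y \hat w = \tfrac{1 + i K\cdot \xi}{1 + |K|^2}\hat w + h\widehat{\s w}$ on $\Rno$, and this prescribes the required trace behavior of $J$ and of $Q$ at $y=0$; the zeroth-order multiplier $R$ is then tuned so that $Q w|_{y=0}$ carries the correct leading data. Simultaneously $F$ must yield an approximate factorization $\Lphet = J^{*} B + h E$, where $B$ is a first-order operator whose symbol corresponds to the ``other'' root of the principal symbol of $\Lphet$ and $E$ is a first-order error whose coefficient is $O(\d)$ because of the variable remainder $\grad f - K$. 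Following the device of \cite{Ch1}, no single $F$ achieves both goals across all frequencies, so I would split using a Fourier cutoff into small- and large-$|\xi|$ regimes, construct distinct symbols $F_{\text{low}}, F_{\text{high}}$ (hence distinct operators $J, Q, B$) on each piece, and patch.

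With $Q$ in hand, Proposition~\ref{CofVCarl} applied to $\chi Q w$ gives
\begin{equation*}
h^{\half}\|\chi Q w\|_{H^1(\tilde{\Gamma}_{2+}^c)} + \frac{h}{\sqrt{\e}}\|\chi Q w\|_{H^1(\tilde{\Om}_2)} \lesssim \|\Lphet \chi Q w\|_{L^2(\tilde{\Om}_2)}.
\end{equation*}
On the left, the identity $J Q w = w$ combined with Lemmas~\ref{bddness} and \ref{CommProps} extracts $h^{\half}\|w\|_{L^2(\tilde{\Gamma}_{+}^c)} + (h/\sqrt\e)\|w\|_{L^2(\tilde{\Om})}$, modulo $O(h)\|w\|_{L^2}$ terms absorbable for small $h$. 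On the right, Lemma~\ref{CommProps} lets me trade $\Lphet \chi Q$ for $\chi Q \Lphet$ plus harmless commutator errors; what remains is the ``key inequality'' $\|Q v\|_{L^2} \lesssim \|v\|_{H^{1*}}$ for $v = \Lphet w$. Because $JQ = \mathrm{id}$, this is equivalent to $\|v\|_{L^2(\RnIp)} \lesssim \|J v\|_{H^{1*}(\RnIp)}$; this fails in general but holds on the range of $\Lphet Q$ via the factorization $\Lphet \approx J^{*} B$ and the duality Lemma~\ref{H1star}, exactly as in the flat-case sketch, provided one verifies the boundary comparison $h^{\half}\|B Q w\|_{H^{\half}(\Rno)} \simeq h^{\half}\|B w\|_{H^{-\half}(\Rno)}$ using the form of \eqref{tildeBC2} near $y=0$.

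The main obstacle is the tension in the construction of $F$: the boundary condition \eqref{tildeBC2} demands that $F(\xi)|_{y=0}$ resemble the bounded rational symbol $\tfrac{1 + iK\cdot\xi}{1 + |K|^2}$, while \eqref{FBehaviour} together with the factorization of the principal symbol of $\Lphet$ (whose roots are $\simeq \pm |\xi|/\sqrt{1+|K|^2}$ for large $|\xi|$) forces $\mathrm{Re}\,F \simeq 1 + |\xi|$. Reconciling these via the low/high frequency split, keeping precise track of the $O(\d)$ errors coming from the non-constancy of $\grad f(x)$ and of the commutator errors from Lemmas~\ref{CommProps} and \ref{flatops}, and ensuring that every loss can be absorbed into the left-hand side of \eqref{FlatHMinus1Carl} for $h_0$ and $\d$ sufficiently small, is where essentially all of the technical work of the proof sits.
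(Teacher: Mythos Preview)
Your overall architecture matches the paper's proof: apply Proposition~\ref{CofVCarl} to $\chi Q w$ with $Q = J^{-1} + T_{\Phi}P$, split into low and high frequencies with different choices of $F$, use the factorization $\Lphet \approx J^{*}B$ together with Lemma~\ref{H1star}, and close via the boundary comparison $h^{\half}\|BQw\|_{H^{\half}(\Rno)} \simeq h^{\half}\|Bw\|_{H^{-\half}(\Rno)}$. All of the essential ingredients are present.

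There is one genuine imprecision in your passage from $\|\Lphet \chi Q w\|_{L^2}$ to $\|\Lphet w\|_{H^{1*}}$. You propose to ``trade $\Lphet \chi Q$ for $\chi Q \Lphet$'' via Lemma~\ref{CommProps} and then apply the key inequality to $u = Q\Lphet w$. But Lemma~\ref{CommProps} concerns commuting $J$ with a cutoff, not commuting $Q$ with a second-order operator; and the commutator $[\Lphet, Q]$ is awkward because of the $P$ piece, which is nonlocal in $y$. More seriously, your justification of the key inequality invokes the factorization $\Lphet \approx J^{*}B$, which puts $\Lphet Q w$ (not $Q\Lphet w$) in the range of $J^{*}$, so the argument you sketch applies to the wrong function. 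The paper's route avoids this: it never commutes $Q$ through $\Lphet$. Instead, after reaching $\|\Lphet Q w\|_{L^2}$ on the right, it proves directly that $\|\Lphet Q w\|_{L^2} \lesssim \|J\Lphet Q w\|_{H^{1*}} + h^{\half}\|J^{*-1}\Lphet Q w\|_{H^{\half}(\Rno)}$ using the factorization on $\Lphet Q w$ itself, and only then commutes $J$ (a genuine first-order operator, for which Lemma~\ref{CommProps} and Lemma~\ref{H1star} apply cleanly) through $\Lphet$, invoking $JQw = w$ at the end. Reordering your argument this way resolves the issue without changing any of the ideas you already have.
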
  

Suppose $w \in \S(\RnIp)$ has support inside $\tilde{\Om}$ and satisfies the boundary conditions \eqref{tildeBC2}.  We want to split $w$ into low and high frequency parts and prove the inequality separately for each of them.  Thus, choose $m_2 > m_1 > 0$, and $\mu_1$ and $\mu_2$ such that 
\[
\frac{|K|}{\sqrt{1+|K|^2}} < \mu_1 < \mu_2 < \half + \frac{|K|}{2\sqrt{1+|K|^2}} < 1.
\]
The eventual choice of $\mu_j$ and $m_j$ will depend only on $K$ and the constant from the Carleman estimate in Proposition \ref{CofVCarl}.  

Define $\rho \in \Czinf(\Rn)$ such that $\rho(\xi) = 1$ if $|\xi| < \mu_1$ and $|K \cdot \xi| < m_1$, and $\rho(\xi) = 0$ if $|\xi| > \mu_2$ or $|K \cdot \xi| > m_2$.  Define $w_s = T_{\rho} w$ and $w_{\ell} = (1 - T_{\rho}) w$, so $w = w_s + w_{\ell}$.  We will prove the following lemmas.

\begin{lemma}\label{smalllemma}
Suppose $w \in \S(\RnIp)$ has support inside $\tilde{\Om}$ and satisfies the boundary conditions \eqref{tildeBC2}, and let $w_{s}$ be defined as above.  Then for appropriate choices of $\delta$, $\mu_1, \mu_2, m_1,$ and $m_2$, 
\begin{equation*}
h^{\half} \|w_{s}\|_{L^2(\Rno)} + \frac{h}{\sqrt{\e}}\|w_{s}\|_{L^2(\RnIp)} \lesssim \| \Lphet w_{s} \|_{H^{1*}(\RnIp)} + h\|w\|_{L^2(\RnIp)} + h\|w\|_{L^2(\Rno)}.
\end{equation*}
\end{lemma}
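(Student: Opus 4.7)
The plan is to follow the flat-case sketch of Section 4.1, adapted to $\Lphet$ and restricted to the low-frequency piece $w_s = T_\rho w$. I would first construct a first-order symbol $F$ satisfying \eqref{FBehaviour} and \eqref{FSymbol} such that, on $\mathrm{supp}\,\rho$, $F$ coincides with the outgoing root $F_+(\xi)$ of the quadratic obtained from $\Lphet$ by freezing $\alpha = 1$ and $\grad f = K$. The bounds $\mu_j < 1$ and $|K\cdot\xi| < m_j$, together with $\delta$ small, are precisely what keep the two roots $F_\pm$ uniformly separated and keep $F + \beta$ uniformly nonzero on $\mathrm{supp}\,\rho$, where $\beta(\xi) = (1 + iK\cdot\xi)/(1+|K|^2)$ is the principal symbol of the right-hand side of \eqref{tildeBC2} at $\grad f = K$. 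With this $F$ in hand, define $Q = J^{-1} + T_{1/(F+\beta)} P$ and choose a cutoff $\chi \in \Czinf(\Omt_2)$ with $\chi \equiv 1$ on $\Omt$. Using the identities $J^{-1}u|_{\Rno} = 0$, $h\partial_y J^{-1} u|_{\Rno} = u|_{\Rno}$, $Pu|_{\Rno} = u|_{\Rno}$, and $h\partial_y Pu|_{\Rno} = -T_F u|_{\Rno}$, a direct computation shows that $\chi Q w_s$ satisfies the boundary condition \eqref{BC2} required by Proposition \ref{CofVCarl} modulo an $h\sigma$ correction that absorbs the $O(\delta)$ error from replacing $\grad f$ by $K$ on $\mathrm{supp}\,\rho$ (choosing $\delta$ small keeps $\sigma$ bounded uniformly in $h$).

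Apply Proposition \ref{CofVCarl} to $\chi Q w_s$. Lemma \ref{bddness} and \eqref{PH1bound} give $\|Qw_s\|_{H^1(\Omt_2)} \gtrsim \|w_s\|_{L^2} + h^{\half}\|w_s\|_{H^{-\half}(\Rno)}$, and Lemma \ref{CommProps} lets us push $\chi$ past $J$ and $T_\rho$ at the cost of $h\|w\|_{L^2}$. The result is
\begin{equation*}
h^{\half}\|w_s\|_{L^2(\Rno)} + \frac{h}{\sqrt{\e}}\|w_s\|_{L^2(\RnIp)} \lesssim \|\Lphet Qw_s\|_{L^2(\Omt_2)} + h\|w\|_{L^2(\RnIp)} + h\|w\|_{L^2(\Rno)}.
\end{equation*}
Next, the choice of $F$ and Lemma \ref{flatops} yield a factoring $\Lphet T_\rho = -J^{*} B T_\rho + h E_1$, where $B$ is a first-order pseudodifferential operator matching the incoming root $F_-$ and $E_1$ is a first-order operator whose contribution is $O(h)\|w\|_{L^2}$ (using the small-frequency cutoff, which makes $H^1$ and $L^2$ norms of $w_s$ comparable). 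Thus $\|\Lphet Qw_s\|_{L^2} \leq \|J^{*}BQw_s\|_{L^2} + O(h)\|w\|_{L^2}$. By Lemma \ref{H1star}, $\|J^{*}BQw_s\|_{L^2} \lesssim \|JJ^{*}BQw_s\|_{H^{1*}} + h^{\half}\|BQw_s\|_{H^{-\half}(\Rno)}$. The commutator $[J,J^{*}]$ is of order $h$, and $JQ$ acts as the identity on $w_s$ modulo terms annihilated by $JP = 0$ and order-$h$ commutators, so $\|JJ^{*}BQw_s\|_{H^{1*}} \lesssim \|\Lphet w_s\|_{H^{1*}} + h\|w\|_{L^2}$. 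Finally, on $\Rno$, $BQw_s$ reduces to the combination $h\partial_y w_s - \beta w_s$, which \eqref{tildeBC2} forces to be $O(h)\|w\|_{L^2(\Rno)}$ after accounting for the $T_\rho$ localization and the $O(\delta)$ error. Assembling these bounds gives the lemma.

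The principal obstacle is making the factoring $\Lphet T_\rho = -J^{*}BT_\rho + hE_1$ sufficiently precise. The variable coefficients $\alpha$ and $\grad f$ do not commute with Fourier multipliers in $x$, and one must verify that the resulting commutators really contribute at size $O(h)$ between the relevant semiclassical Sobolev spaces; this is where Lemma \ref{flatops} and the bounded-frequency cutoff become essential, because on $\mathrm{supp}\,\rho$ first-order semiclassical operators effectively lose a factor of $h$ when commuted past $T_\rho$ and multiplications. A secondary point is to check that $1/(F+\beta)$ is a smooth bounded Fourier multiplier on $\mathrm{supp}\,\rho$, which again rests on the ellipticity enforced by the choice of $\mu_j, m_j$, and $\delta$.
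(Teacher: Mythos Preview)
Your overall architecture matches the paper: build $F$ from the outgoing root on $\mathrm{supp}\,\rho$, set $Q = J^{-1} + T_{\Phi}P$, apply Proposition~\ref{CofVCarl} to $\chi Qw_s$, and then try to pass from $\|\Lphet Qw_s\|_{L^2}$ to $\|\Lphet w_s\|_{H^{1*}}$. The genuine gap is in your treatment of the boundary term at the end.

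You assert that on $\Rno$, $BQw_s$ reduces to $h\partial_y w_s - \beta w_s$ and that \eqref{tildeBC2} then makes this $O(h)\|w\|_{L^2(\Rno)}$. Neither step is correct. First, $B$ is built from the \emph{incoming} root $F_-$, not from $\beta$; the boundary condition matches $h\partial_y w_s$ to $T_{\beta}w_s$ (modulo $O(\delta)+O(h)$), so $Bw_s|_{\Rno} \approx T_{\beta-F_-}w_s$, and since $\beta = \tfrac{1}{2}(F_+ + F_-) \neq F_-$ on $\mathrm{supp}\,\rho$, this is an $O(1)$ quantity, not $O(h)$. Second, in the inequality you attribute to Lemma~\ref{H1star}, the boundary remainder carries the $H^{\half}$ norm (it comes from $\|P(\cdot)\|_{H^1}\simeq h^{\half}\|\cdot\|_{H^{\half}(\Rno)}$), not $H^{-\half}$; with the correct $H^{\half}$ norm there is no way to make this term small directly. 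The paper does \emph{not} try to kill this boundary term. Instead (Claim~S.4) it computes $J^{*-1}\Lphet Qw_s|_{\Rno}$ and $J^{*-1}\Lphet w_s|_{\Rno}$ explicitly by integration by parts, observes that the surviving boundary expressions differ exactly by a factor of $T_{\Phi}$ (order $-1$), and uses this to trade the $H^{\half}$ norm of the former for the $H^{-\half}$ norm of the latter. That $H^{-\half}$ norm is then controlled by $h^{-\half}\|\Lphet w_s\|_{H^{1*}}$ via the lower bound in Lemma~\ref{H1star}. An intermediate estimate (Claim~S.3), specific to the low-frequency regime, is needed to absorb the integral (non-boundary) pieces arising in this computation. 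Your sketch is missing this entire mechanism; without it the boundary term cannot be closed.
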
 

\begin{lemma}\label{largelemma}
Suppose $w \in \S(\RnIp)$ has support inside $\tilde{\Om}$ and satisfies the boundary conditions \eqref{tildeBC2}, and let $w_{\ell}$ be defined as above.  Then for appropriate choice of $\delta$, 
\begin{equation*}
h^{\half} \|w_{\ell}\|_{L^2(\Rno)} + \frac{h}{\sqrt{\e}}\|w_{\ell}\|_{L^2(\RnIp)} \lesssim \| \Lphet w_{\ell} \|_{H^{1*}(\RnIp)}+h\|w\|_{L^2(\RnIp)}+ h\|w\|_{L^2(\Rno)}.
\end{equation*}
\end{lemma}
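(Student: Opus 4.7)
The plan is to follow the high-frequency analogue of the flat-case sketch given earlier in this section, adapting it to the graph operator $\Lphet$ and the boundary condition \eqref{tildeBC2}. On the frequency support of $1 - \rho$, that is where $|\xi| > \mu_1$ or $|K \cdot \xi| > m_1$, the principal symbol of $\Lphet$ regarded as a polynomial in the $y$-frequency $\tau$ has two roots separated by a gap of order $1 + |\xi|$, provided $\delta$ is taken small enough that $\grad f$ stays near $K$. Selecting the root with positive real part and extending it smoothly to all of $\R^n$ yields a symbol $F(\xi)$ of order one satisfying \eqref{FBehaviour} and \eqref{FSymbol}; the buffer between $(\mu_1, m_1)$ and $(\mu_2, m_2)$ provides room for the smooth interpolation of $F$ without disturbing the factorization where it matters. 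With this $F$ we form $J$, $J^*$, $J^{-1}$, $P$ as in Section 3 and obtain an approximate factorization
\[
\Lphet = J^* B + h E,
\]
in which $B$ is a first-order differential operator corresponding to the second root and $E$ is a bounded remainder from $H^1$ to $L^2$, absorbing the $\alpha$-convexification contributions via Lemma \ref{flatops}.

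Next define $Q v = J^{-1} v + T_{(1+F)^{-1}} P v$. By Lemma \ref{bddness} and \eqref{PH1bound} this map is bounded $L^2 \to H^1$, with $J Q v = v$ and $(Q v)|_{\Rno} = T_{(1+F)^{-1}}(v|_{\Rno})$. Because $F$ is a root of the relevant factor of the symbol on $\Rno$, the function $Q w_\ell$ satisfies the boundary condition \eqref{BC2} appearing in Proposition \ref{NDCarl}, modulo an $h\sigma$ perturbation. Multiplying by a smooth cutoff $\chi$ equal to one on $\Omt$ and supported in $\Omt_2$, applying Proposition \ref{CofVCarl} to $\chi Q w_\ell$, and absorbing the commutator $[\chi, \Lphet]$ together with the cutoff errors by Lemma \ref{CommProps}, I get
\[
h^{\half} \|w_\ell\|_{L^2(\Rno)} + \tfrac{h}{\sqrt{\e}} \|w_\ell\|_{L^2(\RnIp)} \lesssim \|\Lphet Q w_\ell\|_{L^2(\Omt_2)} + h\|w\|_{L^2(\RnIp)} + h\|w\|_{L^2(\Rno)},
\]
where on the left I used $w_\ell = J Q w_\ell$ together with the lower bound of Lemma \ref{bddness}.

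The remaining step passes from $\|\Lphet Q w_\ell\|_{L^2}$ to $\|\Lphet w_\ell\|_{H^{1*}}$, mirroring the argument reaching \eqref{WithBndryTerm} in the flat sketch. Substituting $\Lphet Q w_\ell = J^* B Q w_\ell + h E Q w_\ell$ and setting $v = J^* B Q w_\ell$, the combined use of the exact commutation $J J^* = J^* J$, the identity $J P = 0$, Lemma \ref{bddness}, and \eqref{PH1bound} gives
\[
\|v\|_{L^2} \lesssim \|J v\|_{H^{1*}(\RnIp)} + h^{\half}\|(B Q w_\ell)|_{\Rno}\|_{H^{\half}(\Rno)}.
\]
The first term on the right equals $\|J \Lphet Q w_\ell\|_{H^{1*}}$ up to $hE$ errors; commuting $J$ past $B$ modulo $h$-errors controlled by Lemma \ref{flatops} and using $J Q = \mathrm{id}$ reduces it to $\|\Lphet w_\ell\|_{H^{1*}}$. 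The boundary term is rewritten via the boundary condition \eqref{tildeBC2}: because $F$ was chosen precisely to cancel the combination appearing there, $BQw_\ell$ on $\Rno$ reduces to an $h\sigma$ expression in $w_\ell|_{\Rno}$, contributing at most $h\|w\|_{L^2(\Rno)}$. Collecting these estimates delivers the claimed inequality.

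The main obstacle is the simultaneous construction of $F$ that both factors $\Lphet$ well and cancels the boundary condition \eqref{tildeBC2} up to an $h\sigma$ term. The factorization is naturally defined as a symbol on the frequency support of $w_\ell$ where the two roots of the characteristic polynomial are uniformly separated, but $F$ must be smoothly extended over all of $\R^n$ while preserving \eqref{FBehaviour} and \eqref{FSymbol}. This forces $\delta$ small so that the quadratic in $\tau$ stays strictly hyperbolic on the high-frequency region, and dictates the dependence of $\mu_j, m_j$ on $K$. Once $F$ is in hand, the rest is careful operator algebra through Lemmas \ref{bddness}--\ref{H1star}.
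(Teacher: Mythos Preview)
Your outline follows the right architecture, but there are two genuine gaps.

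First, your choice $Q v = J^{-1}v + T_{(1+F)^{-1}}Pv$ is the flat-case $Q$ from the sketch in Section~4.1, not the graph-case one. With the boundary condition \eqref{tildeBC2}, the requirement that $Qw_\ell$ satisfy \eqref{tildeBC} forces the boundary multiplier to be $\Phi = \bigl(F(\xi) + \tfrac{1+i\grad f\cdot\xi}{1+|\grad f|^2}\bigr)^{-1}$, as in the small-frequency proof; a short computation shows $T_{(1+F)^{-1}}$ does \emph{not} produce the $\grad f\cdot h\grad$ term, so Proposition~\ref{CofVCarl} cannot be invoked on $\chi Q w_\ell$ as you state.

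Second, and more seriously, your treatment of the boundary term $h^{\half}\|(BQw_\ell)|_{\Rno}\|_{H^{\half}(\Rno)}$ is wrong. You assert that the choice of $F$ makes $BQw_\ell$ collapse to an $h\sigma$ expression on $\Rno$. It does not: the symbol of $B$ is (up to cutoffs) the root $A_-^\e(\grad f,\xi)$, while the boundary condition fixes $h\partial_y$ to be $\tfrac{1+i\grad f\cdot\xi}{1+|\grad f|^2}$ at symbol level; their difference is the square-root term, which is an honest first-order symbol on the support of $1-\rho$. Consequently $h^{\half}\|BQw_\ell\|_{H^{\half}(\Rno)}$ is of size $h^{\half}\|w_\ell\|_{H^{\half}(\Rno)}$, not $h\|w\|_{L^2(\Rno)}$, and cannot be absorbed.

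The paper closes this gap differently (Claim~L.4). Because $Qw_\ell = T_\Phi w_\ell$ on $\Rno$ and $\Phi$ has order $-1$, one gets $\|BQw_\ell\|_{H^{\half}(\Rno)} \simeq \|Bw_\ell\|_{H^{-\half}(\Rno)}$ after unwinding the boundary condition on both sides. Then Lemma~\ref{H1star}, applied to $J^*(h\partial_y - T_{G^\e_-})JQw_\ell$, gives $h^{\half}\|Bw_\ell\|_{H^{-\half}(\Rno)} \lesssim \|J\Lphet Qw_\ell\|_{H^{1*}} + C_\delta\|\Lphet Qw_\ell\|_{L^2} + R$, and the $C_\delta$ term is absorbed back into Claim~L.2 for $\delta$ small. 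This circular-looking but convergent comparison is the missing idea.
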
 

\begin{proof}[Proof of Proposition \ref{FlatHMinus1}]
The proof of Proposition \ref{FlatHMinus1} follows from these, since we can add up the inequalities to get
\[
h^{\half} \|w\|_{L^2(\Rno)}+\frac{h}{\sqrt{\e}}\|w\|_{L^2(\RnIp)} \lesssim \| \Lphet w_{s} \|_{H^{1*}(\RnIp)}+ \| \Lphet w_{\ell} \|_{H^{1*}(\RnIp)}.
\]
Now $\Lphet w_{s} = \Lphet T_{\rho} w$, so using the commutator properties and Lemma \ref{H1star}, 
\begin{eqnarray*}
& & \|\Lphet w_{s}\|_{H^{1*}(\RnIp)} \simeq \|(1+|\grad f|^2)^{-1} \Lphet T_{\rho} w\|_{H^{1*}(\RnIp)} \\
&\lesssim&   \|T_{\rho}(1+|\grad f|^2)^{-1} \Lphet w\|_{H^{1*}(\RnIp)} + h\|h\partial_y w\|_{H^{1*}(\RnIp)} + h\|w\|_{L^2(\RnIp)} \\
&\lesssim& \|\Lphet w \|_{H^{1*}(\RnIp)} + h\|w\|_{L^2(\RnIp)} + h^{\frac{3}{2}}\|w\|_{H^{-\half}(\Rno)} \\
\end{eqnarray*}
A similar inequality holds for $\Lphet w_{\ell}$.  Therefore for $w \in \S(\RnIp)$ with support inside $\tilde{\Om}$ satisfying \eqref{tildeBC2},
\[
h^{\half} \|w\|_{L^2(\Rno)}+\frac{h}{\sqrt{\e}}\|w\|_{L^2(\RnIp)} \lesssim \| \Lphet w \|_{H^{1*}(\RnIp)}+ h\|w\|_{L^2(\RnIp)}+ h\|w\|_{L^2(\Rno)},
\]
and we can absorb the last two terms into the left side to get 
\[
h^{\half} \|w\|_{L^2(\Rno)}+\frac{h}{\sqrt{\e}}\|w\|_{L^2(\RnIp)} \lesssim \| \Lphet w\|_{H^{1*}(\RnIp)}.
\]
Since $w$ is supported in $\tilde{\Om}$, and $w$ vanishes to first order on $\tilde{\Gamma}_{+}$, this shows \eqref{FlatHMinus1Carl} holds for $w \in \S(\RnIp)$ which satisfy the conditions of the proposition.  Then Proposition \ref{FlatHMinus1} follows from a density argument.  
\end{proof}

\begin{proof}[Proof of Lemma \ref{smalllemma}]
To begin, define 
\[
A_{\pm}(V,\xi) = \frac{1 + iV \cdot \xi \pm \sqrt{(1+iV\cdot \xi)^2 - (1+|V|^2)(1-|\xi|^2)}}{1 + |V|^2},
\]
so $A_{\pm}(V,\xi)$ are roots of the polynomial
\[
(1 + |V|^2)X^2 - 2(1+ iV \cdot \xi)X + (1 - |\xi|^2).
\]
Here we'll choose the branch of the square root with non-negative imaginary part, so the branch cut occurs along the positive real axis.  Note that $A(K,\xi)$ is smooth as a function of $\xi$ except when the argument of the square root lies on the branch cut; i.e., when $K \cdot \xi = 0$ and $1 - (1+|K|^2)(1-|\xi|^2)$ is positive.  There $A(K,\xi)$ has a jump discontinuity of size $1 - (1+|K|^2)(1-|\xi|^2)$.  Now for $\delta_2 > 0$, if $\mu_2$ is small enough, then we can arrange for 
\[
1 - (1+|K|^2)(1-|\xi|^2) \leq \d_2
\]
on the support of $\rho(\xi)$.  Then we can choose a smooth function $F(\xi)$ such that  on the support of $\rho(\xi)$,
\[
|F(\xi) - A(K,\xi)| \leq \delta_2.
\]
Now consider the bounds on $A_{\pm}(K,\xi)$ on the support of $\rho(\xi)$.  By our choice of $\mu_2$, on the support of $\hat{w}_s$, the expression
\[
(1+iK\cdot \xi)^2 - (1+|K|^2)(1-|\xi|^2)
\]
has real part confined to the interval $[-K^2-m_2^2, \delta_2+m_2^2]$, and imaginary part confined to the interval $[-2m_2, 2m_2]$.  Therefore, if $\delta_2$ and $m_2$, are small enough, we can ensure
\[
\mathrm{Re}A_{\pm}(K, \xi) > \frac{1}{2(1+|K|^2)}.
\]
on the support of $\rho$.  Therefore we can take our choice of $F$ to satisfy \eqref{FBehaviour} and \eqref{FSymbol}, so if we define $J$, $J^{-1}$, $J^{*}$, $J^{*-1}$, and $P$ with this choice of $F$, they have all of the properties listed in the previous section.  This allows us to fix the choice of $\mu_1, \mu_2, m_1,$ and $m_2$, depending only on $K$ and $\delta_2$.  

Let $\chi \in C^{\infty}(\RnIp)$ be a cutoff function which is identically one on $\tilde{\Om}$ and identically zero outside $\tilde{\Om}_2$, with $\chi \equiv 1$ on $\tilde{\Gamma}_{2+}^c$ and $\partial_y \chi = 0$ at $y = 0$. Note that 
\[
F(\xi) + \frac{1 + i\grad f \cdot \xi}{1 + |\grad f|^2}
\]
has real part greater than $(1+|K|^2)^{-1}$, so 
\[
\Phi = \left( F(\xi) + \frac{1 + i\grad f \cdot \xi}{1 + |\grad f|^2} \right)^{-1}
\]
is a smooth bounded function in $x$ and $\xi$.  Moreover, one can check that $\Phi$ satisfies the conditions to be a symbol of order $-1$ on $\Rno$.  Now if $w_s$ is as in the statement of the theorem, define
\[
Qw_s = (J^{-1} + T_{\Phi}P)w_s,
\]
By combining the boundedness results for $J^{-1}$, $P$, and $T_{\Phi}$, we get
\begin{equation}\label{QH1Bound}
\|Qv\|_{H^1(\RnIp)} \lesssim \|v\|_{L^2(\RnIp)} + h^{\half}\|v\|_{H^{-\half}(\Rno)}
\end{equation}
and
\begin{equation}\label{QHHalfRnoBound}
\|Qv\|_{H^{\half}(\Rno)} \lesssim \|v\|_{H^{-\half}(\Rno)}.
\end{equation}
for $v \in \S(\RnIp)$.

Now consider the function $\chi Qw_s$.  This is a smooth function on $\tilde{\Om}_2$, and by definition of $\chi$, it vanishes to first order at $\tilde{\Gamma}_{2+}$. Meanwhile, on $\tilde{\Gamma}^c_{2+}$, $\chi Qw = \chi (J^{-1} + T_{\Phi}P)w_s = T_{\Phi}w_s$, and
\[
h \partial_y \chi Qw = h \chi \partial_y (J^{-1} + T_{\Phi}P)w_s = -F(\xi)J^{-1}w_s + w_s - T_{\Phi}T_F w_s
\]
Since $J^{-1}w_s = 0$ at $y= 0$, 
\begin{eqnarray*}
h \partial_y (\chi Q w_s)|_{y = 0} &=& w_s|_{y = 0} - T_{\Phi}T_F w_s|_{y = 0} \\
                                   &=& T_{1 -\Phi F} w_s|_{y = 0} + hE_0 w_s|_{y = 0} \\
                                   &=& \frac{1 + \grad f \cdot h\grad}{1 + |\grad f|^2}T_{\Phi} w_s|_{y = 0} + hE_0 w_s|_{y = 0} \\
                                   &=& \frac{1 + \grad f \cdot h\grad}{1 + |\grad f|^2}(\chi Q w_s)|_{y = 0} + hE_0 w_s|_{y = 0}. \\
\end{eqnarray*}
Therefore $\chi Q w_s$ satisfies \eqref{tildeBC}.  Then by Proposition \ref{CofVCarl}, 
\begin{equation}\label{SubCarl}
h^{\half} \|\chi Qw_s\|_{H^1(\tilde{\Gamma}_{2+}^c)} + \frac{h}{\sqrt{\e}}\|\chi Qw_s\|_{H^1(\tilde{\Om}_2)} \lesssim \| \Lphet \chi Qw_s \|_{L^2(\tilde{\Om}_2)}.
\end{equation}

We will obtain the desired Carleman estimate from this by a series of claims.  Our first task is to remove some of the $Q$s from \eqref{SubCarl}.  Note that we can afford to let errors bounded by $h\|w\|_{L^2(\RnIp)}+ h^{\frac{3}{2}}\|w\|_{L^2(\Rno)}$ accumulate, since a term of this size appears on the right side of the estimate in Lemma \ref{smalllemma}.  Therefore let $R$ denote the expression $h\|w\|_{L^2(\RnIp)}+ h^{\frac{3}{2}}\|w\|_{L^2(\Rno)}$.  Then the first claim is as follows.  
\vspace{3mm}

\noindent \textbf{Claim S.1} 
\begin{equation}\label{PenCarl}
h^{\half}\| w_s\|_{L^2(\Rno)}+\frac{h}{\sqrt{\e}}\|w_s\|_{L^2(\RnIp)} \lesssim \| \Lphet Qw_s \|_{L^2(\RnIp)}+ R
\end{equation}

\vspace{3mm}

To prove Claim S.1, we will consider the terms of \eqref{SubCarl} one by one.  First, 
\begin{eqnarray*}
h^{\half} \|\chi Q w_s\|_{H^1(\tilde{\Gamma}_{2+}^c)} &=& h^{\half} \|\chi (J^{-1} + T_{\Phi}P) T_{\rho} w\|_{H^1(\tilde{\Gamma}_{2+}^c)}\\
                                                      &=& h^{\half} \|\chi T_{\Phi}T_{\rho} w\|_{H^1(\tilde{\Gamma}_{2+}^c)} \\
\end{eqnarray*}
since $J^{-1}w_s|_{y=0} = 0$, and $Pw_s|_{y=0} = w_s|_{y=0}$.  Now using commutator properties for pseudodifferential operators, together with Lemma \ref{flatops}, gives 
\begin{eqnarray*}
\|\chi T_{\Phi}T_{\rho} w\|_{H^1(\Rno)} &\gtrsim& \|T_{\Phi}T_{\rho} \chi w\|_{H^1(\Rno)} - h\|w\|_{L^2(\Rno)} \\
                                        &\gtrsim& \| T_{\Phi}w_s\|_{H^1(\Rno)} - h\|w\|_{L^2(\Rno)}, \\
\end{eqnarray*}
so substituting this into the previous inequality and invoking the boundedness properties of $T_{\Phi}$ gives 
\begin{equation}\label{FirstCarlTerm}
h^{\half} \|\chi Q w_s\|_{H^1(\tilde{\Gamma}_{2+}^c)} \gtrsim h^{\half}\| w_s\|_{L^2(\Rno)} - h^{\frac{3}{2}}\|w\|_{L^2(\Rno)}. 
\end{equation}
Secondly,
\[
\|\chi Q w_s\|_{H^1(\tilde{\Om}_2)}  \gtrsim \|J\chi Q w_s\|_{L^2(\RnIp)} 
\]
by Lemma \ref{bddness}.  Then by Lemma \ref{CommProps},
\begin{eqnarray*}
\|J \chi Q w_s\|_{L^2(\RnIp)} &\gtrsim& \|\chi J(J^{-1} + T_{\Phi}P)w_s\|_{L^2(\RnIp)} - h\|Qw_s\|_{L^2(\RnIp)} \\
                              &\gtrsim& \|\chi w_s + JT_{\Phi}Pw_s\|_{L^2(\RnIp)} - h\|Qw_s\|_{L^2(\RnIp)}. \\
\end{eqnarray*}
Using \eqref{QH1Bound},
\[
\|J \chi Q w_s\|_{L^2(\RnIp)} \gtrsim \|\chi w_s + JT_{\Phi}Pw_s\|_{L^2(\RnIp)} - R.
\]
Now $JT_{\Phi}Pw_s = T_{\Phi}JPw_s + hE_{-1}Pw_s =  hE_{-1}Pw_s$, where $E_{-1}$ is an operator of order $-1$ in the $x$ variables, so
\begin{eqnarray*}
\|J \chi Q w_s\|_{L^2(\RnIp)} &\gtrsim& \|\chi w_s \|_{L^2(\RnIp)}-h\|Pw_s\|_{L^2(\RnIp)}-R \\
                              &\gtrsim& \|\chi w_s\|_{L^2(\RnIp)} - R \\
\end{eqnarray*}                
Meanwhile
\[
\|\chi w_s\|_{L^2(\RnIp)} = \|\chi T_{\rho} w\|_{L^2(\RnIp)} \gtrsim \| w_s\|_{L^2(\RnIp)} - h\|w\|_{L^2(\RnIp)},
\]
so
\begin{equation}\label{SecondCarlTerm}
 \|J \chi Q w_s\|_{L^2(\RnIp)} \gtrsim \|w_s\|_{L^2(\RnIp)} - R.
\end{equation}
Thirdly, 
\begin{eqnarray*}
\| \Lphet \chi Qw_s \|_{L^2(\RnIp)} &\lesssim& \| \Lphet Q w_s \|_{L^2(\RnIp)} + h\|Qw_s \|_{H^1(\RnIp)} \\
                                    &\lesssim& \| \Lphet Qw_s \|_{L^2(\RnIp)}+ R. \\
\end{eqnarray*}

Substituting this expression, along with \eqref{FirstCarlTerm} and \eqref{SecondCarlTerm} back into \eqref{SubCarl} and absorbing away extraneous terms as necessary establishes \eqref{PenCarl} and thus proves the first claim.  The next step is to figure out a way to replace the $\| \Lphet Qw_s \|_{L^2(\RnIp)}$ term in \eqref{PenCarl} with $\|\Lphet w_s\|_{H^{1*}(\RnIp)}$.  This leads us to the second claim.

\vspace{3mm}
\noindent \textbf{Claim S.2}
\[
\| \Lphet Qw_s\|_{L^2(\RnIp)} \lesssim \|\Lphet w_s\|_{H^{1*}(\RnIp)}+ h^{\half}\|J^{*-1} \Lphet Qw_s\|_{H^{\half}(\Rno)} + R.
\]
\vspace{3mm}

To prove Claim S.2, first consider $\| J \Lphet Qw_s \|_{H^{1*}(\RnIp)}$.  By expanding $\Lphet$ and using Lemma \ref{CommProps},
\begin{eqnarray*}
\| J \Lphet Qw_s \|_{H^{1*}(\RnIp)} &\lesssim& \| \Lphet JQw_s \|_{H^{1*}(\RnIp)} + h\| (h\partial_y E_1+ E_2) Qw_s \|_{H^{1*}(\RnIp)}. \\
                                    &\lesssim& \| \Lphet w_s \|_{H^{1*}(\RnIp)} + h\| (h\partial_y E_1+ E_2) Qw_s \|_{H^{1*}(\RnIp)}. \\
\end{eqnarray*}
where $E_1$ and $E_2$ are first and second order operators, respectively, in the $x$ variables.  Thus by Lemma \ref{H1star} and a similar calculation for the transversal operator $E_2$, 
\begin{eqnarray*}
& & \|J\Lphet Qw_s\|_{H^{1*}(\RnIp)} \\
&\lesssim& \|\Lphet w_s\|_{H^{1*}(\RnIp)}+h\|E_1 Qw_s\|_{L^2(\RnIp)}+h\|Qw_s\|_{H^1(\RnIp)}+h^{\frac{3}{2}}\|E_1 Qw_s\|_{H^{-\half}(\Rno)}. \\
\end{eqnarray*}
Therefore
\[
\|J\Lphet Qw_s\|_{H^{1*}(\RnIp)} \lesssim \|\Lphet w_s\|_{H^{1*}(\RnIp)}+h\|Qw_s\|_{H^1(\RnIp)}+h^{\frac{3}{2}}\|Qw_s\|_{H^{\half}(\Rno)}.
\]
Applying the bounds for $Q$ in equations \eqref{QH1Bound} and \eqref{QHHalfRnoBound} gives
\begin{equation}\label{S2toL2line}
\|J\Lphet Qw_s\|_{H^{1*}(\RnIp)} \lesssim \|\Lphet w_s\|_{H^{1*}(\RnIp)} + R.
\end{equation}
Now
\begin{eqnarray*}
\| J \Lphet Q w_s \|_{H^{1*}(\RnIp)} &=&       \| JJ^{*}J^{*-1} \Lphet Qw_s \|_{H^{1*}(\RnIp)} \\
                                     &=&       \| J^{*}JJ^{*-1} \Lphet Qw_s \|_{H^{1*}(\RnIp)} \\
                                     &\gtrsim& \| JJ^{*-1} \Lphet Qw_s \|_{L^2(\RnIp)} \\
\end{eqnarray*}
Since $JP = 0$,
\[
\| J \Lphet Q w_s \|_{H^{1*}(\RnIp)} \gtrsim \| J(J^{*-1} \Lphet Qw_s - PJ^{*-1} \Lphet Qw_s) \|_{L^2(\RnIp)}.
\]
Now $J^{*-1} \Lphet Qw_s - PJ^{*-1} \Lphet Qw_s$ is zero at $\Rno$, so applying the appropriate boundedness result for $J$ gives
\begin{eqnarray*}                                     
\| J \Lphet Q w_s \|_{H^{1*}(\RnIp)} &\gtrsim& \| J^{*-1} \Lphet Qw_s - PJ^{*-1} \Lphet Qw_s \|_{H^1(\RnIp)} \\
                                     &\gtrsim& \| \Lphet Qw_s\|_{L^2(\RnIp)} - \|PJ^{*-1} \Lphet Qw_s\|_{H^1(\RnIp)} \\
                                     &\gtrsim& \| \Lphet Qw_s\|_{L^2(\RnIp)}-h^{\half}\|J^{*-1} \Lphet Qw_s\|_{H^{\half}(\Rno)}. \\
\end{eqnarray*}
Substituting this into \eqref{S2toL2line} proves Claim S.2.  

The next goal is to eliminate the $h^{\half}\|J^{*-1} \Lphet Qw_s\|_{H^{\half}(\Rno)}$ in Claim S.2.  Before we get to this, though, we'll need an intermediate claim.

\vspace{3mm}
\noindent \textbf{Claim S.3}
\[
\|w_s\|_{H^1(\RnIp)} \lesssim \|\Lphet w_s\|_{H^{1*}(\RnIp)} + h^{\half}C_{\delta} \|w_s\|_{L^2(\Rno)}.
\]
\vspace{3mm}

To prove Claim S.3, we can expand $\Lphet$ and take advantage of the assumptions that $|\a-1|, |\grad f - K| \leq \delta$ to write
\begin{eqnarray*}
\|\Lphet w_s\|_{H^{1*}(\RnIp)} &\gtrsim& \|((1+|K|^2)h^2\partial_y^2 - 2(1 + K\cdot h\grad_x)h\partial_y + 1 +h^2\Lap_x)w_s\|_{H^{1*}(\RnIp)} \\
                               & &       - C_{\delta} \|E_2w_s\|_{H^{1*}(\RnIp)} \\
                               &\gtrsim& \|(h\partial_y-T_{A_-(K,\xi)})(h\partial_y - T_{A_+(K,\xi)})w_s\|_{H^{1*}(\RnIp)} \\
															 & & - C_\delta \|E_2w_s\|_{H^{1*}(\RnIp)} \\
\end{eqnarray*}
Now on the support of $\hat{w}_s$, both $A_{+}(K,\xi)$ and $A_{-}(K,\xi)$ have the necessary properties of $F$ to prove the boundedness properties from Section 3.  Therefore by Lemma \ref{H1star},
\begin{eqnarray*}
\|\Lphet w_s\|_{H^{1*}(\RnIp)} &\gtrsim& \|(h\partial_y -T_{A_+})w_s\|_{L^2(\RnIp)}- C_\delta\|E_2w_s\|_{H^{1*}(\RnIp)} \\
\end{eqnarray*}
Thus
\begin{equation*}
\|\Lphet w_s\|_{H^{1*}(\RnIp)} \gtrsim \|w_s\|_{H^1(\RnIp)} - C_\delta \|E_2 w_s\|_{H^{1*}(\RnIp)}. 
\end{equation*}
Meanwhile we can write 
\[
E_2 w_s = (E_1 E_1' + E_1'')w_s
\]
where $E_1, E_1',$ and $E_1''$ are all first order operators.  Then applying Lemma \ref{H1star}, 
\[
\|E_2 w_s\|_{H^{1*}(\RnIp)} \lesssim \|w_s\|_{H^1(\RnIp)}+ h^{\half}\|w_s\|_{H^{\half}(\Rno)}.
\]
Since $\hat{w}_s$ is supported only for small frequencies, $\|w_s\|_{H^{\half}(\Rno)} \simeq \|w_s\|_{H^{-\half}(\Rno)}$, so this proves Claim S.3.

Note that Claim S.3 is a small frequency phenomenon only -- it only works because $w_s$ vanishes at large frequencies.  In large frequencies, though, we'll be able to factor $\Lphet$ using pseudodifferential operators, which we cannot do at small frequencies because of smoothness issues with the resulting symbols.  

Now we are ready to deal with the $h^{\half}\|J^{*-1} \Lphet Qw_s\|_{H^{\half}(\Rno)}$ term from Claim S.2.

\vspace{3mm}
\noindent \textbf{Claim S.4}
\begin{equation}\label{ClaimS4}
h^{\half}\|J^{*-1} \Lphet Q w_s\|_{H^{\half}(\Rno)} \lesssim \|\Lphet w_s\|_{H^{1*}(\RnIp)} + h^{\half} C_{\delta, \delta_2} \|w_s\|_{L^2(\Rno)}.
\end{equation}
\vspace{3mm}

The first step in the proof of Claim S.4 is to note that by Lemma \ref{H1star}, 
\[
\| \Lphet w_s \|_{H^{1*}(\RnIp)} \gtrsim h^{\half}\| J^{*-1} \Lphet w_s \|_{H^{-\half}(\Rno)}.
\]
Therefore it will suffice to show that $\| J^{*-1} \Lphet w_s \|_{H^{-\half}(\Rno)}$ and $\|J^{*-1} \Lphet Q w_s\|_{H^{\half}(\Rno)}$ are comparable up to some acceptable error.

This means we need to calculate $J^{*-1} \Lphet w_s$ and $J^{*-1} \Lphet Q w_s$ at $y=0$.  These are nearly identical calculations, so we'll start with $J^{*-1} \Lphet w_s$.
\begin{eqnarray*}
\widehat{J^{*-1} \Lphet w_s}|_{y=0} &=& \frac{1}{h}\int_0^\infty \widehat{\Lphet w_s}(\xi, t)e^{\frac{-F(\xi)t}{h}}dt \\
                                    &=& \frac{1}{h}\int_0^\infty h^2\partial_t^2\mathcal{F}[(1+|\grad f|^2)w_s]e^{\frac{-F(\xi)t}{h}}dt \\
                                    & & - \frac{2}{h}\int_0^\infty h\partial_t\mathcal{F}[(\a + \grad f \cdot h\grad_x)w_s]e^{\frac{-F(\xi)t}{h}}dt \\
                                    & & + \frac{1}{h}\int_0^\infty (\widehat{\a^2 w_s} -|\xi|^2\hat{w}_s)e^{\frac{-F(\xi)t}{h}}dt \\
\end{eqnarray*}
We can integrate by parts to get rid of the $h\partial_t$'s:
\begin{eqnarray*}
\widehat{J^{*-1} \Lphet w_s}|_{y=0} &=& h^{-1}\int_0^\infty F^2(\xi)\mathcal{F}[(1+|\grad f|^2)w_s]e^{\frac{-F(\xi)t}{h}}dt \\
                                    & & - 2h^{-1}\int_0^\infty F(\xi)\mathcal{F}[(\a + \grad f \cdot h\grad_x)w_s]e^{\frac{-F(\xi)t}{h}}dt \\
                                    & & + h^{-1}\int_0^\infty (\widehat{\a^2 w_s} -|\xi|^2\hat{w}_s)e^{\frac{-F(\xi)t}{h}}dt \\
                                    & & -h\partial_t \mathcal{F}[(1+|\grad f|^2)w_s]|_{t=0}+ 2\mathcal{F}[(\a + \grad f \cdot h\grad_x)w_s]|_{t=0} \\
                                    & & -F(\xi)\mathcal{F}[(1+|\grad f|^2)w_s]|_{t=0}.
\end{eqnarray*}
Now we take advantage of the fact that $|\a-1|, |\grad f - K| \leq \delta$ to write
\begin{eqnarray*}
\widehat{J^{*-1} \Lphet w_s}|_{y=0} &=& h^{-1}\int_0^\infty F^2(\xi)\mathcal{F}[(1+|K|^2)w_s]e^{\frac{-F(\xi)t}{h}}dt \\
                                    & & - 2h^{-1}\int_0^\infty F(\xi)\mathcal{F}[(1 + K \cdot h\grad_x)w_s]e^{\frac{-F(\xi)t}{h}}dt \\
                                    & & + h^{-1}\int_0^\infty (1 - |\xi|^2)\widehat{w_s} e^{\frac{-F(\xi)t}{h}}dt + h^{-1}\int_0^\infty \delta \widehat{E_2 w}_s e^{\frac{-F(\xi)t}{h}}dt \\
                                    & & -h\partial_t \mathcal{F}[(1+|\grad f|^2)w_s]|_{t=0}+ 2\mathcal{F}[(\a + \grad f \cdot h\grad_x)w_s]|_{t=0} \\
                                    & & -F(\xi)\mathcal{F}[(1+|\grad f|^2)w_s]|_{t=0}, \\
\end{eqnarray*}
where $E_2$ is a second order operator in the $x$ variables with uniform bounds in $\delta$.  Then by definition of $F$, we in fact get
\begin{eqnarray*}
\widehat{J^{*-1} \Lphet w_s}|_{y=0} &=& h^{-1}\int_0^\infty (\delta + \delta_2) E_2 \hat{w}_s e^{\frac{-F(\xi)t}{h}}dt -h\partial_t \mathcal{F}[(1+|\grad f|^2)w_s]|_{t=0}\\
                                    & & + 2\mathcal{F}[(\a + \grad f \cdot h\grad_x)w_s]|_{t=0} -F(\xi)\mathcal{F}[(1+|\grad f|^2)w_s]|_{t=0}, \\
\end{eqnarray*}
where $E_2$ is a (different) second order operator in the $x$ variables with uniform bounds in $\delta$.  Now we invoke the boundary conditions.  Since $w$ satisfies the boundary condition
\[
(1 + |\grad f|^2) h\partial_{y} w = w + \grad f \cdot h\grad w + h\sigma w \mbox{ on } \tilde{\Gamma}_{2+}^c,
\]
$w_s$ satisfies the same boundary condition, albeit with a different $\sigma$.  Therefore we get
\begin{eqnarray*}
\widehat{J^{*-1} \Lphet w_s}|_{y=0} &=& h^{-1}\int_0^\infty (\delta + \delta_2) \widehat{E_2 w}_s e^{\frac{-F(\xi)t}{h}}dt + \mathcal{F}[(\a + \grad f \cdot h\grad_x)w_s]|_{t=0} \\
                                    & & -F(\xi)\mathcal{F}[(1+|\grad f|^2)w_s]|_{t=0} - h\widehat{\sigma w}_s|_{t=0}\\
                                    &=& h^{-1}\int_0^\infty (\delta + \delta_2) \widehat{E_2 w}_s e^{\frac{-F(\xi)t}{h}}dt \\
                                    & & + \mathcal{F}[(\a + \grad f \cdot h\grad_x - T_F(1+|\grad f|^2)) w_s]|_{t=0}- h\widehat{\sigma w}_s|_{t=0}.\\
\end{eqnarray*}
Now precisely the same calculation applies to $J^{*-1} \Lphet Q w_s$, so
\begin{eqnarray*}
\widehat{J^{*-1} \Lphet Q w_s}|_{y=0} &=& h^{-1}\int_0^\infty (\delta+\delta_2) \mathcal{F}[E_2 Q w_s] e^{\frac{-F(\xi)t}{h}}dt \\
                                    & & + \mathcal{F}[(\a + \grad f \cdot h\grad_x - T_F(1+|\grad f|^2))Q w_s]|_{t=0} - h\mathcal{F}[ \sigma Qw_s]|_{t=0}.\\
\end{eqnarray*}
At $t= 0$, $Q w_s = T_{\Phi}w_s$, so 
\begin{equation}\label{JLQw}
\begin{split}
\widehat{J^{*-1} \Lphet Q w_s}|_{y=0} =& h^{-1}\int_0^\infty (\delta+\delta_2) \mathcal{F}[E_2 Q w_s] e^{\frac{-F(\xi)t}{h}}dt \\
                                    &+ \mathcal{F}[(\a + \grad f \cdot h\grad_x - T_F(1+|\grad f|^2))T_{\Phi} w_s]|_{t=0}- h\widehat{\sigma T_{\Phi}w}_s|_{t=0}.\\
\end{split}
\end{equation}
We are interested in the quantity 
\[
\|J^{*-1} \Lphet Q w_s\|^2_{H^{\half}(\Rno)} \simeq h^{-n}\int_{\Rn} (1+|\xi|) |\mathcal{F}[J^{*-1} \Lphet Q w_s]|_{y=0}|^2 d\xi.
\]
Substituting the expression from \eqref{JLQw} and integrating, we get 
\begin{eqnarray*}
\|J^{*-1} \Lphet Q w_s\|^2_{H^{\half}(\Rno)} &\lesssim& \|(\a + \grad f \cdot h\grad_x - T_F(1+|\grad f|^2))T_{\Phi} w_s \|^2_{H^{\half}(\Rno)} \\
& &        + h^2\|w_s\|^2_{H^{-\half}(\Rno)} + h^{-1}C_{\delta,\delta_2} \|T_{(1+|\xi|)^{\half}}E_2 T_{\Phi}w_s\|^2_{L^2(\RnIp)} .\\
\end{eqnarray*}
Now since $\hat{w}_s$ is supported only for small $|\xi|$, 
\begin{eqnarray*}
\|J^{*-1} \Lphet Q w_s\|^2_{H^{\half}(\Rno)} &\lesssim& \|(\a + \grad f \cdot h\grad_x - T_F(1+|\grad f|^2))T_{\Phi} w_s \|^2_{H^{\half}(\Rno)} \\
& &        + h^2\|w_s\|^2_{H^{-\half}(\Rno)} + h^{-1}C_{\delta,\delta_2} \| w_s\|^2_{L^2(\RnIp)}. \\
\end{eqnarray*}
Now using commutator properties of pseudodifferential operators on $\Rno$, we get
\begin{eqnarray*}
\|J^{*-1} \Lphet Qw_s\|^2_{H^{\half}(\Rno)} &\lesssim& \|T_{\Phi}(\a + \grad f \cdot h\grad_x - T_F(1+|\grad f|^2)) w_s \|^2_{H^{\half}(\Rno)} \\
& &        + h^2\|w_s\|^2_{H^{-\half}(\Rno)} + h^{-1}C_{\delta,\delta_2} \| w_s\|^2_{L^2(\RnIp)}. \\
&\lesssim& \|(\a + \grad f \cdot h\grad_x - T_F(1+|\grad f|^2)) w_s \|^2_{H^{-\half}(\Rno)} \\
& &        + h^2\|w_s\|^2_{H^{-\half}(\Rno)} + h^{-1}C_{\delta,\delta_2} \| w_s\|^2_{L^2(\RnIp)}. \\
\end{eqnarray*}

Meanwhile a similar calculation for $\| J^{*-1} \Lphet w_s \|_{H^{-\half}(\Rno)}$ yields
\begin{eqnarray*}
\|J^{*-1} \Lphet w_s\|^2_{H^{-\half}(\Rno)} &\gtrsim&  \|(\a + \grad f \cdot h\grad_x - T_F(1+|\grad f|^2)) w_s \|^2_{H^{-\half}(\Rno)} \\
& &        - h^2\|w_s\|^2_{H^{-\half}(\Rno)} - h^{-1}C_{\delta,\delta_2} \| w_s\|^2_{L^2(\RnIp)}. \\
\end{eqnarray*}
Therefore
\begin{equation}\label{ABwsL2}
\begin{split}
& \|J^{*-1} \Lphet Qw_s\|_{H^{\half}(\Rno)} \\
\lesssim &\|J^{*-1} \Lphet w_s\|_{H^{-\half}(\Rno)} +h\|w_s\|_{H^{-\half}(\Rno)} +h^{-\half}C_{\delta,\delta_2} \|w_s\|_{L^2(\RnIp)}. \\
\end{split}
\end{equation}
Invoking Claim S.3 lets us bound this by 
\[
\|J^{*-1} \Lphet w_s\|_{H^{-\half}(\Rno)} +h^{\half}C_{\delta,\delta_2}\|w_s\|_{H^{-\half}(\Rno)} +h^{-\half}\|\Lphet w_s\|_{H^{1*}(\RnIp)}.
\]
Then using Lemma \ref{H1star}, 
\[
\| \Lphet w_s \|_{H^{1*}(\RnIp)} \gtrsim h^{\half}\| J^{*-1} \Lphet w_s \|_{H^{-\half}(\Rno)},
\]
so we get
\[
h^{\half}\|J^{*-1} \Lphet Qw_s\|_{H^{\half}(\Rno)} \lesssim \|\Lphet w_s\|_{H^{1*}(\RnIp)}+h^{\half}C_{\delta,\delta_2} \|w_s\|_{H^{-\half}(\Rno)},
\]
which finishes the proof of Claim S.4.  
\vspace{3mm}

Now we can finish the proof of Lemma \ref{smalllemma}.  Substituting Claim S.2 into Claim S.1 gives
\begin{eqnarray*}
& &        h^{\half}\| w_s\|_{L^2(\Rno)}+\frac{h}{\sqrt{\e}}\|w_s\|_{L^2(\RnIp)} \\
&\lesssim& \|\Lphet w_s\|_{H^{1*}(\RnIp)}+ R + h^{\half}\|J^{*-1} \Lphet Qw_s\|_{H^{\half}(\Rno)}\\
\end{eqnarray*}
Then substituting Claim S.4 into this inequality and writing out $R$ in full gives
\begin{eqnarray*}
& &        h^{\half}\| w_s\|_{L^2(\Rno)}+\frac{h}{\sqrt{\e}}\|w_s\|_{L^2(\RnIp)} \\
&\lesssim& \|\Lphet w_s\|_{H^{1*}(\RnIp)}+ h\|w\|_{L^2(\RnIp)}+ h^{\frac{3}{2}}\|w\|_{L^2(\Rno)} +h^{\half} C_{\delta, \delta_2}\|w_s\|_{H^{-\half}(\Rno)}.\\
\end{eqnarray*}

Absorbing the last term on the right side into the left side finishes the proof.  Note $\delta_2$ depends only on the constant in the Carleman estimate from Proposition \ref{CofVCarl} and operator norms of $J$ and the related operators, which depend only on $K$.  This justifies the claim made in defining $m_i$ and $\mu_i$.  

\end{proof}

\begin{proof}[Proof of Lemma \ref{largelemma}]

To begin, redefine 
\[
A_{\pm}(V,\xi) = \frac{1 + iV \cdot \xi \pm \sqrt{(1+iV\cdot \xi)^2 - (1+|V|^2)(1-|\xi|^2)}}{1 + |V|^2},
\]
so $A_{\pm}(V,\xi)$ are roots of the polynomial
\[
(1 + |V|^2)X^2 - 2(1+ iV \cdot \xi)X + (1 - |\xi|^2),
\]
as before, but now take the branch of the square root with nonnegative real part, so the branch cut lies on the nonpositive real axis.  

Now define
\[
A^{\e}_{\pm}(V,\xi) = \frac{\a + iV \cdot \xi \pm \sqrt{(\a+iV\cdot \xi)^2 - (1+|V|^2)(\a^2-|\xi|^2)}}{1 + |V|^2},
\]
so $A^{\e}_{\pm}(V,\xi)$ are the roots of the polynomial
\[
(1 + |V|^2)X^2 - 2(\a+ iV \cdot \xi)X + (\a^2 - |\xi|^2),
\]
using the same branch of the square root as above.  (Recall that $\a$ is defined by $\a = 1 + \frac{h}{\e}(y + f(x))$.)  

Now set $\zeta \in C^{\infty}_0(\Rn)$ to be a smooth cutoff function such that $\zeta(\xi) \equiv 1$ if 
\[
|K \cdot \xi| < \half m_1 \mbox{ and } |\xi| < \half \frac{|K|}{\sqrt{1+|K|^2}} +  \half \mu_1, 
\]
and $\zeta \equiv 0$ if $|K \cdot \xi| \geq m_1$ or $|\xi| \geq \mu_1$.  Define 
\[
G_{\pm}(V, \xi) = (1-\zeta)A_{\pm}(V,\xi) + \zeta
\]
and
\[
G^{\e}_{\pm}(V, \xi) = (1-\zeta)A^{\e}_{\pm}(V,\xi) + \zeta.
\]

Consider the singular support of $A^{\e}_{\pm}(K,\xi)$.  These are smooth as functions of $x$ and $\xi$ except when the argument of the square root falls on the non-positive real axis.  This occurs when $K \cdot \xi = 0$ and
\[
|\xi|^2 \leq \frac{\a^2|K|^2}{1 + |K|^2}.
\] 
This does not occur on the support of $1 - \z$, so it follows that $G_{\pm}(K, \xi)$ are smooth, and one can check that they are symbols of first order on $\Rn$.  Moreover $G_{+}(K,\xi)$ satisfies \eqref{FBehaviour}, so we can now redefine $F(\xi) = G_{+}(K,\xi)$ and define $J$, $P$, $\Phi$ and the related operators with respect to this choice of $F$.  

Note that for $\d$ sufficiently small, depending on $K$, it's also true that 
\[
|\xi|^2 \leq \frac{\a^2|K|^2}{1 + |K|^2}
\] 
does not occur on the support of $1 -\z$.  Therefore
\[
G^{\e}_{\pm}(\grad f, \xi) = (1-\zeta)A^{\e}_{\pm}(\grad f,\xi) + \zeta  
\]
are smooth, and one can check that they are symbols of first order on $\Rn$.  

Now define
\[
Qw_{\ell} = (J^{-1} + T_{\Phi}P)w_{\ell}.
\]
This $Q$ has the same boundedness properties as the one from the small frequency case.  Moreover, consider the function $\chi Q w_{\ell}$, where $\chi$ is as in the proof of Lemma \ref{smalllemma}.  As before, this satisfies $\eqref{tildeBC}$, so by Proposition \ref{CofVCarl}, 
\begin{equation*}
h^{\half} \|\chi Qw_{\ell}\|_{H^1(\tilde{\Gamma}_{2+}^c)} + \frac{h}{\sqrt{\e}}\|\chi Qw_{\ell}\|_{H^1(\tilde{\Om}_2)} \lesssim \| \Lphet \chi Q w_{\ell} \|_{L^2(\tilde{\Om}_2)}.
\end{equation*}
By following the arguments in Claim S.1 from the small frequency case, this becomes

\vspace{3mm}
\noindent \textbf{Claim L.1}
\begin{equation}\label{PenCarlLarge}
h^{\half}\| w_{\ell}\|_{L^2(\Rno)}+\frac{h}{\sqrt{\e}}\|w_{\ell}\|_{L^2(\RnIp)} \lesssim \| \Lphet Q w_{\ell} \|_{L^2(\RnIp)} + R. 
\end{equation}
\vspace{3mm}

Here 
\[
R = h\|w\|_{L^2(\RnIp)}+ h^{\frac{3}{2}}\|w\|_{L^2(\Rno)}
\]
as before.  Now we want to replace the $\| \Lphet Q w_{\ell} \|_{L^2(\RnIp)}$ term on the right with $\| \Lphet w_{\ell} \|_{H^{1*}(\RnIp)}$.  As in the small frequency case, our first attempt at this involves an extra boundary term.  

\vspace{3mm}
\noindent \textbf{Claim L.2} 
\begin{eqnarray*}
\| \Lphet Q w_{\ell} \|_{L^2(\RnIp)} &\lesssim& \| \Lphet w_{\ell} \|_{H^{1*}(\RnIp)}+ R \\
                                     & &        + h^{\half}\|(h\partial_y-T_{G^{\e}_{-}(\grad f, \xi)}) Qw_{\ell}\|_{H^{\half}(\Rno)}. \\
\end{eqnarray*}
\vspace{3mm}

To prove Claim L.2, note first that
\begin{equation}\label{JLphetQ_Lphet}
\| J \Lphet Qw_{\ell} \|_{H^{1*}(\RnIp)}  \lesssim \| \Lphet w_{\ell} \|_{H^{1*}(\RnIp)}+ R
\end{equation}
by the same arguments used in the small frequency case to prove \eqref{S2toL2line}.  Therefore it suffices to show that 
\begin{eqnarray*}
\| \Lphet Q w_{\ell} \|_{L^2(\RnIp)} &\lesssim& \| J \Lphet Qw_{\ell} \|_{H^{1*}(\RnIp)}+R \\
                                     & &        + h^{\half}\|(h\partial_y-T_{G^{\e}_{-}(\grad f, \xi)}) Qw_{\ell}\|_{H^{\half}(\Rno)}. \\
\end{eqnarray*}

So let's first examine $\| J \Lphet Q w_{\ell} \|_{H^{1*}(\RnIp)}$. Using properties of pseudodifferential operators, and writing $A_{\pm}^\e$ for $A_{\pm}^\e(\grad f, \xi)$, we can write
\begin{eqnarray*}
& & (1 + |\grad f|^2)(h\partial_y - T_{G^{\e}_{+}(\grad f, \xi)})(h\partial_y - T_{G^{\e}_{-}(\grad f, \xi)}) \\
&=& (1 + |\grad f|^2)(h^2\partial_y^2 - T_{A^{\e}_{+} + A^{\e}_{-}}T_{1 - \zeta}h\partial_y +T_{A_{+}^{\e}A_{-}^{\e}}T_{1-\zeta}^2) \\
& & + (1 + |\grad f|^2)(h\partial_y T_{2\zeta} + T_{\zeta^2} + T_{1-\zeta}T_{A^{\e}_{+} + A^{\e}_{-}}T_{\zeta}) + hE_1 \\
\end{eqnarray*}
Since $\zeta = 0$ on the support of $\hat{w}_{\ell}$, 
\[
T_{\zeta}Qw_{\ell} = T_{\zeta}(J^{-1} + T_{\Phi}P)w_{\ell} = hE_{-1} P w_{\ell}.
\]
Here $E_1$ and $E_{-1}$ are some operators of order $1$ and $-1$, respectively. Then
\begin{eqnarray*}
& &       \| J \Lphet Qw_{\ell} \|_{H^{1*}(\RnIp)} \\
&\gtrsim& \| J (h\partial_y - T_{G^{\e}_{+}(\grad f, \xi)})(h\partial_y - T_{G^{\e}_{-}(\grad f, \xi)}) Qw_{\ell} \|_{H^{1*}(\RnIp)} \\
& &       - h\|J E_1Qw_{\ell} \|_{H^{1*}(\RnIp)} - h\|J E_0 Pw_{\ell} \|_{H^{1*}(\RnIp)}. \\
\end{eqnarray*}
Using Lemma 3.4, we see that the last two terms are bounded by $R$.  Now since $G^{\e}_{-}(\grad f, \xi)$ differs from $G_{+}(K,\xi)$ by $O(\delta)$, 
\begin{equation}\label{L4StartPt}
\begin{split}
\| J \Lphet Qw_{\ell} \|_{H^{1*}(\RnIp)} \gtrsim & \|J J^{*}(h\partial_y - T_{G^{\e}_{-}(\grad f, \xi)}) Qw_{\ell} \|_{H^{1*}(\RnIp)} \\
                                                 & - C_{\delta}\|JE_1'(h\partial_y-T_{G^{\e}_{-}(\grad f,\xi)})Qw_{\ell}\|_{H^{1*}(\RnIp)} - R. \\
\end{split}
\end{equation}
$J$ and $J^{*}$ commute, so using Lemma \ref{H1star} gives us
\begin{eqnarray*}
\| J \Lphet Qw_{\ell} \|_{H^{1*}(\RnIp)} &\gtrsim& \|J(h\partial_y - T_{G^{\e}_{-}(\grad f, \xi)}) Qw_{\ell} \|_{L^2(\RnIp)} \\
                                         & &       - C_{\delta}\|JE_1'(h\partial_y-T_{G^{\e}_{-}(\grad f,\xi)})Qw_{\ell}\|_{H^{1*}(\RnIp)} -R. \\
\end{eqnarray*}
Now $JP = 0$, so
\begin{eqnarray*}
\| J \Lphet Qw_{\ell} \|_{H^{1*}(\RnIp)} &\gtrsim& \|J((h\partial_y-T_{G^{\e}_{-}(\grad f, \xi)}) Qw_{\ell}-P(h\partial_y-T_{G^{\e}_{-}(\grad f, \xi)}) Qw_{\ell})\|_{L^2(\RnIp)} \\
& &       - C_{\delta}\|JE_1'(h\partial_y-T_{G^{\e}_{-}(\grad f,\xi)})Qw_{\ell}\|_{H^{1*}(\RnIp)} - R. \\
\end{eqnarray*}
Then $v - Pv = 0$ at $\Rno$, by definition of $P$.  Therefore we can use Lemma \ref{bddness} to show that 
\begin{eqnarray*}
& &       \| J \Lphet Qw_{\ell} \|_{H^{1*}(\RnIp)} \\
&\gtrsim& \|(h\partial_y-T_{G^{\e}_{-}(\grad f, \xi)}) Qw_{\ell}-P(h\partial_y-T_{G^{\e}_{-}(\grad f, \xi)}) Qw_{\ell}\|_{H^1(\RnIp)} \\
& &       - C_{\delta}\|JE_1'(h\partial_y-T_{G^{\e}_{-}(\grad f,\xi)})Qw_{\ell}\|_{H^{1*}(\RnIp)} - R \\
&\gtrsim& \|(h\partial_y-T_{G^{\e}_{-}(\grad f, \xi)}) Qw_{\ell}\|_{H^1(\RnIp)} -\|P(h\partial_y-T_{G^{\e}_{-}(\grad f, \xi)}) Qw_{\ell}\|_{H^1(\RnIp)} \\
& &       - C_{\delta}\|JE_1'(h\partial_y-T_{G^{\e}_{-}(\grad f,\xi)})Qw_{\ell}\|_{H^{1*}(\RnIp)} - R \\
&\gtrsim& \|(h\partial_y-T_{G^{\e}_{-}(\grad f, \xi)}) Qw_{\ell}\|_{H^1(\RnIp)} - h^{\half}\|(h\partial_y-T_{G^{\e}_{-}(\grad f, \xi)}) Qw_{\ell}\|_{H^{\half}(\Rno)} \\
& &       - C_{\delta}\|JE_1'(h\partial_y-T_{G^{\e}_{-}(\grad f,\xi)})Qw_{\ell}\|_{H^{1*}(\RnIp)} - R. \\
\end{eqnarray*}
Now
\begin{eqnarray*}
& &       \|(h\partial_y-T_{G^{\e}_{-}(\grad f, \xi)}) Qw_{\ell}\|_{H^1(\RnIp)} \gtrsim \|J^{*}(h\partial_y-T_{G^{\e}_{-}(\grad f, \xi)}) Qw_{\ell}\|_{L^2(\RnIp)} \\
&\gtrsim& \|(h\partial_y - T_{G^{\e}_{+}(\grad f, \xi)})(h\partial_y-T_{G^{\e}_{-}(\grad f, \xi)}) Qw_{\ell}\|_{L^2(\RnIp)} \\
& &      -C_{\delta}\|(h\partial_y-T_{G^{\e}_{-}(\grad f, \xi)}) Qw_{\ell}\|_{H^1(\RnIp)} \\
&\gtrsim& \|\Lphet Qw_{\ell}\|_{L^2(\RnIp)} - h\|E_1 Qw_{\ell}\|_{L^2(\RnIp)} -C_{\delta}\|(h\partial_y-T_{G^{\e}_{-}(\grad f, \xi)}) Qw_{\ell}\|_{H^1(\RnIp)}, \\
\end{eqnarray*}
so for small enough $\delta$,
\[
\|(h\partial_y-T_{G^{\e}_{-}(\grad f, \xi)}) Qw_{\ell}\|_{H^1(\RnIp)} \gtrsim \|\Lphet Qw_{\ell}\|_{L^2(\RnIp)} - h\|Qw_{\ell}\|_{H^1(\RnIp)}.
\]
Using the boundedness results for $Q$, we have 
\begin{equation}\label{ClaimS3}
\|(h\partial_y-T_{G^{\e}_{-}(\grad f, \xi)}) Qw_{\ell}\|_{H^1(\RnIp)} \gtrsim \|\Lphet Qw_{\ell}\|_{L^2(\RnIp)} - R.
\end{equation}
(This is the analogous statement to Claim S.3 for the large frequency case:  we've factored $\Lphet$ into two operators, one of which has the proper invertibility property.)  

Therefore
\begin{eqnarray*}
& &       \| J \Lphet Qw_{\ell} \|_{H^{1*}(\RnIp)} \\
&\gtrsim& \|\Lphet Qw_{\ell}\|_{L^2(\RnIp)}-h^{\half}\|(h\partial_y-T_{G^{\e}_{-}(\grad f,\xi)}) Qw_{\ell}\|_{H^{\half}(\Rno)} \\
& &       - C_{\delta}\|JE_1'(h\partial_y-T_{G^{\e}_{-}(\grad f,\xi)})Qw_{\ell}\|_{H^{1*}(\RnIp)} - R. \\
\end{eqnarray*}
Now we can use Lemma \ref{H1star} to replace the second last term by 
\[
-C_{\delta}\|(h\partial_y-T_{G^{\e}_{-}(\grad f,\xi)})Qw_{\ell}\|_{H^1(\RnIp)} - h^{\half}C_{\delta}\|(h\partial_y-T_{G^{\e}_{-}(\grad f,\xi)})Qw_{\ell}\|_{H^{\half}(\Rno)}.
\]
The first part can be absorbed into  $\|\Lphet Qw_{\ell}\|_{L^2(\RnIp)}$ using \eqref{ClaimS3}, so 
\begin{eqnarray*}
\| J \Lphet Qw_{\ell} \|_{H^{1*}(\RnIp)} &\gtrsim& \|\Lphet Qw_{\ell}\|_{L^2(\RnIp)} \\
                                         & &       -h^{\half}\|(h\partial_y-T_{G^{\e}_{-}(\grad f,\xi)})Qw_{\ell}\|_{H^{\half}(\Rno)}-R.\\
\end{eqnarray*}
This finishes the proof of Claim L.2.  Now we need to remove the extraneous boundary term.  

\vspace{3mm}
\noindent \textbf{Claim L.4}  
\begin{eqnarray*}
& &        h^{\half}\|(h\partial_y-T_{G^{\e}_{-}(\grad f, \xi)}) Qw_{\ell}\|_{H^{\half}(\Rno)} \\
&\lesssim& \| \Lphet w_{\ell} \|_{H^{1*}(\RnIp)}  +C_{\delta}\|\Lphet Qw_{\ell}\|_{L^2(\RnIp)} + R. \\
\end{eqnarray*}
\vspace{3mm}

As in the previous claim, we'll instead prove that 
\begin{eqnarray*}
& &        h^{\half}\|(h\partial_y-T_{G^{\e}_{-}(\grad f, \xi)}) Qw_{\ell}\|_{H^{\half}(\Rno)} \\
&\lesssim& \| J \Lphet Qw_{\ell} \|_{H^{1*}(\RnIp)}  +C_{\delta}\|\Lphet Qw_{\ell}\|_{L^2(\RnIp)} + R. \\
\end{eqnarray*}
and use \eqref{JLphetQ_Lphet}.

Returning to the inequality \eqref{L4StartPt} from the proof of Claim L.2 and considering the commutator of $J$ and $(h\partial_y - T_{G^{\e}_{-}(\grad f, \xi)})$, we get
\begin{eqnarray*}
& &       \| J \Lphet Qw_{\ell} \|_{H^{1*}(\RnIp)} \\
&\gtrsim& \|J^{*}(h\partial_y - T_{G^{\e}_{-}(\grad f, \xi)})J Qw_{\ell} \|_{H^{1*}(\RnIp)} - h\|J^{*}E_1'' Qw_{\ell} \|_{H^{1*}(\RnIp)} \\
& &       - C_{\delta}\|JE_1'(h\partial_y-T_{G^{\e}_{-}(\grad f,\xi)})Qw_{\ell}\|_{H^{1*}(\RnIp)} - R. \\
\end{eqnarray*}
$JQ$ is nearly the identity; more precisely $JQ = I + h E_{-1}$ for some order $-1$ operator $E_{-1}$.  Together with Lemma \ref{H1star}, this gives us
\begin{eqnarray*}
& &       \| J \Lphet Qw_{\ell} \|_{H^{1*}(\RnIp)} \\
&\gtrsim& h^{\half} \|(h\partial_y - T_{G^{\e}_{-}(\grad f, \xi)}) w_{\ell} \|_{H^{-\half}(\Rno)} \\
& &       - h\|(h\partial_y - T_{G^{\e}_{-}(\grad f, \xi)}) E_{-1} Pw_{\ell} \|_{L^2(\RnIp)} - h^{\frac{3}{2}}\|E_1'' Qw_{\ell} \|_{H^{-\half}(\Rno)} \\
& &       - C_{\delta}\|E_1'(h\partial_y-T_{G^{\e}_{-}(\grad f,\xi)})Qw_{\ell}\|_{L^2(\RnIp)} -h\|E_1'' Qw_{\ell} \|_{L^2(\RnIp)} \\
& &       - h^{\half}C_{\delta}\|E_1'(h\partial_y-T_{G^{\e}_{-}(\grad f,\xi)})Qw_{\ell}\|_{H^{-\half}(\Rno)} - R. \\
\end{eqnarray*}
Using boundedness results for the various operators involved, we get
\begin{eqnarray*}
& &       \| J \Lphet Qw_{\ell} \|_{H^{1*}(\RnIp)} \\
&\gtrsim& h^{\half} \|(h\partial_y - T_{G^{\e}_{-}(\grad f, \xi)}) w_{\ell} \|_{H^{-\half}(\Rno)} - R \\
& &       - C_{\delta}\|\Lphet Qw_{\ell}\|_{L^2(\RnIp)} - h^{\half}C_{\delta}\|(h\partial_y-T_{G^{\e}_{-}(\grad f,\xi)})Qw_{\ell}\|_{H^{\half}(\Rno)}. \\
\end{eqnarray*}

Now we invoke the boundary conditions on $w$. Since $w$ satisfies the boundary conditions \eqref{tildeBC}, $w_{\ell}$ does as well, with a different $\sigma$.  Therefore on $\Rno$, 
\[
h\partial_y w_{\ell} = \frac{1+ h\sigma + \grad f \cdot h\grad }{1 + |\grad f|^2}w_{\ell}.
\]
Then
\begin{equation}\label{penL4}
\begin{split}
        & \| J \Lphet Qw_{\ell} \|_{H^{1*}(\RnIp)} \\
\gtrsim & h^{\half}\|(1+ \grad f \cdot h\grad -(1+|\grad f|^2)T_{G^{\e}_{-}(\grad f, \xi)}) w_{\ell} \|_{H^{-\half}(\Rno)} - R \\
        & - C_{\delta}\|\Lphet Qw_{\ell}\|_{L^2(\RnIp)} - h^{\half}C_{\delta}\|(h\partial_y-T_{G^{\e}_{-}(\grad f,\xi)})Qw_{\ell}\|_{H^{\half}(\Rno)} \\
\gtrsim & h^{\half}\|T_{\Phi}(1+\grad f \cdot h\grad-(1+|\grad f|^2)T_{G^{\e}_{-}(\grad f, \xi)}) w_{\ell} \|_{H^{\half}(\Rno)} - R \\
        & - C_{\delta}\|\Lphet Qw_{\ell}\|_{L^2(\RnIp)} - h^{\half}C_{\delta}\|(h\partial_y-T_{G^{\e}_{-}(\grad f,\xi)})Qw_{\ell}\|_{H^{\half}(\Rno)} \\
\end{split}
\end{equation}
Now since $w_{\ell}$ satisfies \eqref{tildeBC}, $Qw_{\ell}$ does as well, and so on $\Rno$,
\begin{eqnarray*}
h\partial_y Qw_{\ell} &=& \frac{1+ h\sigma + \grad f \cdot h\grad }{1 + |\grad f|^2}(Qw_{\ell}) \\
                      &=& T_{\Phi} \frac{1+ h\sigma + \grad f \cdot h\grad }{1 + |\grad f|^2}(w_{\ell}) + hE_{-1} w_{\ell}. \\
\end{eqnarray*}
Moreover, on $\Rno$,
\[
T_{G^{\e}_{-}(\grad f,\xi)}Qw_{\ell} = T_{G^{\e}_{-}(\grad f,\xi)}T_{\Phi}w_{\ell} = T_{\Phi}T_{G^{\e}_{-}(\grad f,\xi)}w_{\ell} + hE'_{-1}w_{\ell},
\]
so
\begin{eqnarray*}
& & \|(h\partial_y - T_{G^{\e}_{-}(\grad f, \xi)}) Q w_{\ell} \|_{H^{\half}(\Rno)}\\
&\lesssim& \|T_{\Phi}(1+ \grad f \cdot h\grad -(1+|\grad f|^2)T_{G^{\e}_{-}(\grad f, \xi)}) w_{\ell} \|_{H^{\half}(\Rno)} + R.\\
\end{eqnarray*}
If we substitute this into \eqref{penL4}, then for small enough $\delta$,  
\begin{eqnarray*}
\| J \Lphet Qw_{\ell} \|_{H^{1*}(\RnIp)} &\gtrsim& h^{\half}\|(h\partial_y - T_{G^{\e}_{-}(\grad f, \xi)}) Q w_{\ell} \|_{H^{\half}(\Rno)} \\
                                         & &       - C_{\delta}\|\Lphet Qw_{\ell}\|_{L^2(\RnIp)} - R .\\
\end{eqnarray*}
This completes the proof of Claim L.4.  
\vspace{3mm}

Now we can complete the proof of Lemma \ref{largelemma} by combining the claims and absorbing extraneous terms, as in the small frequency case.

\end{proof}

This completes the proof of Proposition \ref{FlatHMinus1}.  Now by changing variables back to $\Om$, we get the following proposition.

\begin{prop}\label{GraphCarl}
Suppose $w \in H^1(\Om)$ satisfies \eqref{BC}, and $\Gamma_{+}^c$ coincides with a graph of the form $y = f(x)$, where $|\grad f - K| < \d$ for some constants $K \in \Rn$ and $\d > 0$.  
If $\delta$ is small enough, then
\begin{equation*}
h^{\half} \|w\|_{L^2(\Gamma_{+}^c)} + \frac{h}{\sqrt{\e}}\|w\|_{L^2(\Om)} \lesssim \| \Lphe w \|_{H^{1*}(\Om)}.
\end{equation*}
\end{prop}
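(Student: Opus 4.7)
\textbf{Proof proposal for Proposition \ref{GraphCarl}.}

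The plan is to reverse the change of variables used just before Proposition \ref{CofVCarl}, reducing everything to Proposition \ref{FlatHMinus1} on the flattened domain $\Omt$. Given $w \in H^1(\Om)$ satisfying \eqref{BC} with $\Gamma_{+}^c$ on the graph $y = f(x)$, I would define $\tilde{w}(x,y) = w(x, y + f(x))$ on $\Omt$. Since $(x,y) \mapsto (x, y - f(x))$ is a smooth diffeomorphism with $\|\grad f\|_{\infty}$ bounded by $|K| + \delta$, the semiclassical $L^2$ norms transform into equivalent ones on $\Omt$ and on $\tilde{\Gamma}_{+}^c \subset \Rno$, with constants independent of $h$.

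Next I would verify that the boundary condition on $w$ transforms into \eqref{tildeBC2} for $\tilde{w}$. Using the outward normal $\nu = (-\grad f,1)/\sqrt{1+|\grad f|^2}$ on $\Gamma_{+}^c$ and the chain rule identities $\partial_y w = \partial_{\tilde{y}} \tilde{w}$ and $\partial_x w = \partial_x \tilde{w} - \grad f\, \partial_{\tilde{y}}\tilde{w}$ at the boundary, \eqref{BC} (which says $h\partial_\nu w = (\partial_\nu \ph) w + h \sigma w$ on $\Gamma_{+}^c$) rearranges directly into
\[
h \partial_{\tilde{y}} \tilde{w} = \frac{\tilde{w} + \grad f \cdot h\grad_x \tilde{w}}{1+|\grad f|^2} + h \sigma' \tilde{w}
\]
for a modified $\sigma'$ of the same form. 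Then I would carry out the direct computation (analogous to the one in Proposition \ref{CofVCarl}) showing
\[
(\Lphe w)(x, y+f(x)) = \Lphet \tilde{w}(x,y) + h E_1 \tilde{w}(x,y),
\]
where $E_1$ is a first-order semiclassical differential operator with smooth coefficients depending on $f$ and $\ph_c$. Applying Proposition \ref{FlatHMinus1} to $\tilde{w}$ and converting the $L^2$ norms back gives
\[
h^{\half}\|w\|_{L^2(\Gamma_{+}^c)} + \frac{h}{\sqrt{\e}}\|w\|_{L^2(\Om)} \lesssim \|\Lphet \tilde{w}\|_{H^{1*}(\Omt)},
\]
and the remaining task is to bound the right-hand side by $\|\Lphe w\|_{H^{1*}(\Om)}$ up to absorbable error.

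The main obstacle is controlling the $H^{1*}$ norm under the change of variables, since this is a dual space and the preceding propositions have been stated on the flat side. My plan is to argue by duality: pullback by the smooth diffeomorphism $(x,y) \mapsto (x, y-f(x))$ induces a bounded isomorphism of semiclassical $H^1$ spaces with constants depending only on $\|\grad f\|_{\infty}$, and hence also a bounded isomorphism of the dual spaces $H^{1*}$, uniformly in $h$. This transfers $\|\Lphet \tilde{w}\|_{H^{1*}(\Omt)}$ to within an equivalent constant of $\|\Lphe w + hE_1 w\|_{H^{1*}(\Om)}$, and since $E_1$ is first-order, $\|hE_1 w\|_{H^{1*}(\Om)} \lesssim h\|w\|_{L^2(\Om)}$ by the same duality used in Lemma \ref{H1star}. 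For fixed $\e$ with $\sqrt{\e}<1$ we have $h \ll h/\sqrt{\e}$, so this error absorbs into the left side for $h$ small enough, completing the argument.
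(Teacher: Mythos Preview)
Your proposal is correct and matches the paper's approach exactly: the paper states Proposition~\ref{GraphCarl} immediately after Proposition~\ref{FlatHMinus1} with the single remark ``by changing variables back to $\Om$,'' and you have simply written out those details. One small omission: when you bound $\|hE_1 w\|_{H^{1*}}$ via the duality of Lemma~\ref{H1star}, a boundary term $h^{3/2}\|w\|_{L^2(\Gamma_+^c)}$ also appears, but it is absorbable into $h^{1/2}\|w\|_{L^2(\Gamma_+^c)}$ just as the interior error is.
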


\subsection{Finishing the Proof of Theorem \ref{LinearCarl}}

Now suppose $\Gamma_{+}$ is as in the hypotheses of Theorem \ref{LinearCarl}, with no extra conditions.  Since $\Gamma_{+}$ is a neighbourhood of $\partial \Om_{+}$, it follows that on $\Gamma_{+}^c$, $\partial_{\nu}\ph < c < 0$ for some $c < 0$.  Therefore locally $\Gamma_{+}^c$ is a graph of the form $y = f(x)$, with $\Om$ lying above the graph.  Moreover, in small enough neighbourhoods, $f$ can be made to obey the graph conditions put on $f$ in the last subsection.  In other words, at any point $p \in \Gamma_{+}^c$, there exists some neighbourhood $U \subset \RnI$ of $p$ such that $\Gamma_{+}^c \cap U$ coincides with a graph of the form $y = f(x)$, with $\Om \cap U$ lying in the set $y > f(x)$, and $|\grad f - K| < \d$, where $K$ is some constant, and $\d$ is small enough for Proposition \ref{GraphCarl} to hold.

Since $\Gamma_{+}^c$ is compact, we can take a finite open cover $U_1, \ldots, U_{m-1}$ of such open sets, and augment it by $U_m$ such that $U_1, \ldots U_m$ is an open cover of $\overline{\Om}$, and $U_m \cap \Gamma_{+}^c$ is empty.  Then 
\[
h^{\half} \|v_j\|_{L^2(\Gamma_{+}^c \cap U_j)} + \frac{h}{\sqrt{\e}}\|v_j\|_{L^2(\Om \cap U_j)} \lesssim \| \Lphe v_j \|_{H^{1*}(\Om)}
\]
holds for all $v_j \in H^1(\Om \cap U_j)$ such that 
\begin{equation}\label{jBC}
\begin{split}
v_j, \partial_{\nu} v_j &= 0 \mbox{ on } \partial (U_j \cap \Om) \setminus \Gamma^c  \\
h\partial_{\nu} (e^{-\frac{\ph}{h}} v_j) &= h\s e^{-\frac{\ph}{h}} v_j \mbox{ on } \Gamma^c \cap U_j.
\end{split}
\end{equation}
Now let $\chi_1, \ldots \chi_m$ be a partition of unity subordinate to $U_1, \ldots U_m$, and for $w \in H^1(\Om)$ satisfying \eqref{BC}, define $w_j = \chi_j w$.  Then $w_j$ satisfies \eqref{jBC} for some $\s$, and so 
\[
h^{\half} \|w_j\|_{L^2(\Gamma_{+}^c \cap U_j)} + \frac{h}{\sqrt{\e}}\|w_j\|_{L^2(\Om)} \lesssim \| \Lphe w_j \|_{H^{1*}(\Om)}.
\]
Adding these estimates together gives
\[
h^{\half} \|w\|_{L^2(\Gamma_{+}^c)} + \frac{h}{\sqrt{\e}}\|w\|_{L^2(\Om)} \lesssim \sum_{j=1}^m \| \Lphe w_j \|_{H^{1*}(\Om)}.
\]
Now 
\begin{eqnarray*}
\| \Lphe w_j \|_{H^{1*}(\Om)} &=&        \| \Lphe \chi_j w \|_{H^{1*}(\Om)} \\
                              &\lesssim& \| \chi_j \Lphe w \|_{H^{1*}(\Om)} + h\| E_1 w \|_{H^{1*}(\Om)}\\
\end{eqnarray*}
where $E_1$ is a first order differential operator.  Then by Lemma \ref{H1star}, 
\begin{eqnarray*}
\| \Lphe w_j \|_{H^{1*}(\Om)} &\lesssim& \| \chi_j \Lphe w \|_{H^{1*}(\Om)} + h\| w \|_{L^2(\Om)} +  h^{\frac{3}{2}}\| w \|_{L^2(\partial \Om)} \\
                              &\lesssim& \| \Lphe w \|_{H^{1*}(\Om)} + h\| w \|_{L^2(\Om)} +  h^{\frac{3}{2}}\| w \|_{L^2(\Gamma_{+}^c)}. \\
\end{eqnarray*}
Therefore
\[
h^{\half} \|w\|_{L^2(\Gamma_{+}^c)} + \frac{h}{\sqrt{\e}}\|w\|_{L^2(\Om)} \lesssim \| \Lphe w \|_{H^{1*}(\Om)} + h\| w \|_{L^2(\Om)} +  h^{\frac{3}{2}}\| w \|_{L^2(\Gamma_{+}^c)},
\]
and the last two terms can be absorbed back into the left side to give
\begin{equation}\label{LpheCarl}
h^{\half} \|w\|_{L^2(\Gamma_{+}^c)} + \frac{h}{\sqrt{\e}}\|w\|_{L^2(\Om)} \lesssim \| \Lphe w \|_{H^{1*}(\Om)}.
\end{equation}
Now we want to replace $\Lphe$ with $\Lphaqe$.  The two operators are related by 
\[
\Lphaqe = \Lphe + 2hA \cdot hD  + 2ihA \cdot \grad \ph_c + h^2(A^2 + q + (D \cdot A)),
\]
so
\[
\| \Lphaqe w \|_{H^{1*}(\Om)} \gtrsim \| \Lphe w \|_{H^{1*}(\Om)} - h\|A \cdot hD w\|_{H^{1*}(\Om)} - h\|w\|_{H^{1*}(\Om)} - h^2\|w\|_{H^{1*}(\Om)}.
\]
The last two terms are bounded by $h\|w\|_{L^2(\Om)}$, so
\[
\| \Lphaqe w \|_{H^{1*}(\Om)} \gtrsim \| \Lphe w \|_{H^{1*}(\Om)} - h\|A \cdot hD w\|_{H^{1*}(\Om)} - h\|w\|_{L^2(\Om)}.
\]
Moreover by Lemma \ref{H1star}, 
\[
h\|A \cdot hD w\|_{H^{1*}(\Om)} \lesssim h\|w\|_{L^2(\Om)} + h^{\frac{3}{2}}\|w\|_{L^2(\partial \Om)}, 
\]
so
\[
\| \Lphaqe w \|_{H^{1*}(\Om)} \gtrsim \| \Lphe w \|_{H^{1*}(\Om)} - h\|w\|_{L^2(\Om)} - h^{\frac{3}{2}}\|w\|_{L^2(\partial \Om)}.
\]
Substituting this into \eqref{LpheCarl} gives
\[
h^{\half} \|w\|_{L^2(\Gamma_{+}^c)} + \frac{h}{\sqrt{\e}}\|w\|_{L^2(\Om)} \lesssim \| \Lphaqe w \|_{H^{1*}(\Om)},
\]
where the missing terms have been absorbed into their counterparts on the left side.  Finally, if $w$ satisfies \eqref{BC} then so does $e^{\frac{\ph^2}{2\e}}w$, so
\[
h^{\half} \|e^{\frac{\ph^2}{2\e}} w\|_{L^2(\Gamma_{+}^c)} + \frac{h}{\sqrt{\e}}\|e^{\frac{\ph^2}{2\e}} w\|_{L^2(\Om)} \lesssim \| e^{\frac{\ph^2}{2\e}} \Lphaq w \|_{H^{1*}(\Om)}.
\]
Then using the boundedness of $e^{\frac{\ph^2}{2\e}}$ on $\Om$, we get
\[
h^{\half} \|w\|_{L^2(\Gamma_{+}^c)} + h\|w\|_{L^2(\Om)} \lesssim \| \Lphaq w \|_{H^{1*}(\Om)}.
\]
This finishes the proof of Theorem \ref{LinearCarl}.

\section{The Logarithmic Case}

Now we turn to the proof of Theorem \ref{MainCarl}.  Following ~\cite{DKSaU} (see in particular Remark 2.8), it suffices, by a change of variables, to work in the following setting.  Let $M_0$ be a smooth compact $n$ dimensional Riemannian manifold with Riemannian metric $g_0$, and let $T = M_0 \times \R$ be equipped with the metric $g = c(g_0 \oplus e)$, where $c > 0$ is a conformal factor.  Let $\Om$ be a smooth domain compactly contained in $T$.  Using the coordinates $(x,y)$ on $T$, where $x \in M_0$ and $y \in \R$, set $\ph(x,y) = y$.  Then we need to prove the following Carleman estimate.  

\begin{theorem}\label{ManifoldCarl}
Define $\partial \Om_{+}$ relative to $\ph$ as before.  Let $\Gamma_{+}$ be a neighbourhood of $\partial \Om_{+}$.  Let $w \in H^1(\Om)$ be such that
\begin{equation}\label{ManBC}
\begin{split}
w, \partial_{\nu} w &= 0 \mbox{ on } \Gamma_{+} \\
h\partial_{\nu} w &= (\partial_{\nu}\ph) w + h\s w \mbox{ on } \Gamma_{+}^c \\
\end{split}
\end{equation}
for some order zero operator $\s$ bounded uniformly in $h$.  There exists $h_0 > 0$ such that if $0<h<h_0$, then
\begin{equation}\label{theMnfldCarl}
h^{\half} \|w\|_{L^2(\Gamma_{+}^c)} + h\|w\|_{L^2(\Om)} \lesssim \| \Lphaq w \|_{H^{1*}(\Om)},
\end{equation}
where $\Lphaq$ is the conjugated operator
\[
\Lphaq = h^2 e^{\frac{\ph}{h}}\Laq e^{-\frac{\ph}{h}},
\]   
and $\Laq$ is as given in \eqref{Laq}, but with $D$ defined in terms of the connection $\grad$ on $T$.  
\end{theorem}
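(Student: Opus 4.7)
The plan is to transplant the proof of Theorem \ref{LinearCarl} to the Riemannian product setting. Because $\ph(x,y)=y$ and the metric is $g = c(g_0 \oplus e)$, the conjugated operator $\Lphe$ retains a product structure: after multiplication by the conformal factor and modulo lower-order corrections, its principal part reads $h^2\partial_y^2 - 2\a h\partial_y + \a^2 + h^2\Lap_{g_0}$ with $\a = 1 + h\ph/\e$, which is exactly the manifold analogue of $\Lphet$ from Proposition \ref{CofVCarl}. The proof will therefore use the same four ingredients as Section 4: an $H^1$-Carleman estimate in the manifold setting (the version of Proposition \ref{NDCarl} proved in \cite{Ch2}) as input; local flattening of $\Gamma_{+}^c$ into a graph; a small/large frequency splitting; and a partition-of-unity assembly.

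First I would construct manifold versions of $J, J^{*}, J^{-1}, J^{*-1}, P, Q$, built from pseudodifferential operators on $M_0$ with an elliptic first-order symbol $F(x,\xi)$ of the required type. Since everything is product-like in $y$, the computations of Section 3 apply slice by slice, and Lemmas \ref{bddness}--\ref{H1star} transfer at the cost of additional commutator errors of size $h$, which are harmless because they can be absorbed on the left of \eqref{theMnfldCarl}. Next I would prove a local graph version (the analogue of Proposition \ref{GraphCarl}) near each point of $\Gamma_{+}^c$ by flattening the boundary and running the small-frequency and large-frequency arguments of Lemmas \ref{smalllemma} and \ref{largelemma}. The factorization symbols $A_{\pm}(x,\xi)$ now depend on $x$ through $g_0$, but their ellipticity bounds hold uniformly on compact patches, so the cutoffs $\rho$ and $\z$ and the choice of $F$ still go through. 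A partition of unity patches these to the global estimate for $\Lphe$; one then passes from $\Lphe$ to $\Lphaqe$ by absorbing $2hA\cdot hD$ via Lemma \ref{H1star}, and finally from $\Lphaqe$ to $\Lphaq$ by the convexifier trick $e^{\ph^2/(2\e)}w$ at the end of Section 4.

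The main obstacle will be the manifold analogues of Claims S.4 and L.4. These hinge on a delicate cancellation between the boundary value of $J^{*-1}\Lphet w$ (reduced by integration by parts in $y$) and the Robin-type condition \eqref{ManBC}, leaving only errors controlled by the small parameters $\delta$ and $\delta_2$. When the symbols $A_\pm$, $F$, and $\Phi$ depend on $x$ through the metric $g_0$ and the conformal factor $c$, this cancellation must be carried out modulo subprincipal corrections, which forces one to track the pseudodifferential expansions of compositions such as $T_F T_\Phi$ and $T_F(1+|\grad f|^2)$ one order deeper in $h$ than in the flat case. The boundary condition $h\partial_\nu w = (\partial_\nu \ph)w + h\s w$ must collapse the leading boundary symbol exactly, so that the residual terms are of size $h\|w\|_{L^2(\Om)} + h^{3/2}\|w\|_{L^2(\partial\Om)}$ and can be absorbed. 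Once these two claims are verified, the remainder of the proof, including the absorption arguments and the reduction back to $\Lphaq$, is essentially verbatim from the end of Section 4.
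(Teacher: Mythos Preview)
Your overall architecture is right and matches the paper: reduce from $\Lphaq$ to $\Lphe$, localize so that $\Gamma_{+}^c$ is a graph, flatten, split into small/large frequencies, and reassemble by a partition of unity. But you are over-engineering the analytic core, and mis-identifying the obstacle.

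The paper does \emph{not} build manifold pseudodifferential versions of $J, J^{*}, J^{-1}, J^{*-1}, P, Q$. Instead it localizes further: on each patch one chooses coordinates in which $|g_0 - I| < \delta$, and then uses \emph{exactly the same} Euclidean-Fourier operators from Section~3. The effect of the metric (and of the conformal factor, which is first set to $1$ as in~\cite{CST}) is then absorbed as an $O(\delta)$ perturbation of the principal symbol, not as an $O(h)$ subprincipal correction. In particular, Lemma~\ref{logsmalllemma} is proved \emph{verbatim} as Lemma~\ref{smalllemma}: the extra $C_\delta\|E_2\chi Q w_s\|_{L^2}$ term coming from $h^2\Lap_{g_0}-h^2\Lap_x$ is swallowed by the left side. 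For Lemma~\ref{loglargelemma} the only change is to redefine the factoring symbols $A^{\e}_{\pm}$ with $g_0^{ij}\xi_i\xi_j$ in place of $|\xi|^2$; since $|g_0 - I|\le\delta$ these remain within $O(\delta)(1+|\xi|)$ of the Euclidean $A_{\pm}(K,\xi)$, and Claims L.2 and L.4 go through unchanged. So there is no need to ``track the pseudodifferential expansions one order deeper in $h$'': the cancellation in the analogues of Claims S.4 and L.4 is governed by the spatial smallness parameter $\delta$, not by $h$.

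One smaller point: the starting $H^1$-Carleman estimate in this setting is taken from~\cite{CST} (Theorem~1.2 for zero-forms), not from~\cite{Ch2}; this is what gives Proposition~\ref{CVCSTCarl} after the two changes of variables.
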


As in the linear case, we will make a series of reductions here.  Firstly, it suffices to prove that for $w \in H^1(\Om)$ satisfying \eqref{ManBC}, 
\[
h^{\half} \|w\|_{L^2(\Gamma_{+}^c)} + \frac{h}{\sqrt{\e}}\|w\|_{L^2(\Om)} \lesssim \| \Lphe w \|_{H^{1*}(\Om)},
\]
where $\Lphe$ is the conjugated operator
\[
\Lphaq = h^2 e^{\frac{\ph_c}{h}}\Lap e^{-\frac{\ph_c}{h}},
\]   
and $\Lap$ is the Laplace-Beltrami operator on $T$.  Then Theorem \ref{ManifoldCarl} follows, since introducing $A$ and $q$ gives rise to errors which can be absorbed into the terms on the left hand side.  

Secondly, we can assume, as in ~\cite{CST}, that the conformal factor $c$ in the metric on $T$ is identically equal to 1.  Finally, as in the proof of Theorem \ref{LinearCarl}, it suffices to divide the domain into pieces, and prove the estimate on each piece separately.  Therefore we may as well assume that there is a choice of coordinates on $\Om$ such that $g_0$ is nearly the Euclidean metric, and $\Gamma_{+}^c$ coincides with a graph of the form $y= f(x)$, where $f$ is smooth.  Then as in ~\cite{CST}, we can change variables twice, first by $(x,y) \mapsto (x, y-f(x))$, and then by the choice of coordinates on $T$.  This maps $\Om$ to a domain $\tilde{\Om}$ in $\R^{n+1}_+$, and $\Gamma_+$ to a subset of $\R^n_0$.  Now it suffices to prove the following proposition.

\begin{prop}\label{LogFlatHMinus1}
Suppose $w \in H^1(\tilde{\Om})$, and 
\begin{equation}\label{logtildeBC}
\begin{split}
w, \partial_{\nu} w &= 0 \mbox{ on } \tilde{\Gamma}_{+} \\
h \partial_{y}w|_{\tilde{\Gamma}_+^c} &= \frac{w + \b \cdot h \grad_{g_0} w - h\s w}{1 + |\g |^2}.
\end{split}
\end{equation}
where $\s$ is smooth and bounded on $\tilde{\Om}$, and $\b$ and $\g$ are a vector valued and scalar valued function, respectively, which coincide with the coordinate representations of $\grad_{g_0} f$ and $|\grad_{g_0} f|_{g_0}$.  
There exists $h_0 > 0$ such that if $0<h<h_0$, then
\begin{equation*}
h^{\half} \|w\|_{L^2(\tilde{\Gamma}_{+}^c)} + \frac{h}{\sqrt{\e}}\|w\|_{L^2(\tilde{\Om})} \lesssim \| \Lphet w \|_{H^{1*}(\RnIp)}.
\end{equation*}
where
\[
\Lphet  = (1+|\g|^2)h^2\partial_y^2 - 2(\a + \b \cdot h \grad_{g_0})h\partial_y + \a^2 + h^2\L,
\]
and $\L$ is the second order differential operator in the $x'$ variables given by 
\[
\L = g_0^{ij}\partial_i \partial_j.
\]
\end{prop}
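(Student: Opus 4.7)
The plan is to mirror the proof of Proposition \ref{FlatHMinus1} almost step for step. The starting point is the analog of Proposition \ref{CofVCarl} in this manifold setting, obtained from the manifold version of Theorem 1.3 of \cite{Ch2} (with the Riemannian metric $g_0$ in place of the Euclidean one) followed by the same flattening change of variables $(x,y) \mapsto (x, y - f(x))$. This yields an $H^1$-to-$L^2$ Carleman estimate for $\Lphet$ with boundary conditions \eqref{logtildeBC}, and the task is then to shift each Sobolev space down by one.

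As in the linear case, I would split $w = w_s + w_{\ell}$ into small- and large-frequency parts using a cutoff in $\xi$ adapted to the value of $\b$ at a distinguished point $K$. The operators $J$, $J^{*}$, $J^{-1}$, $J^{*-1}$, $P$, and $Q = J^{-1} + T_{\Phi} P$ must be rebuilt from symbols $F(x,\xi)$ approximating roots of the quadratic
\[
(1+|V|^2)X^2 - 2(\a + iV\cdot \xi)X + (\a^2 - g_0^{ij}(x)\xi_i \xi_j),
\]
rather than the Euclidean quadratic used in Section 4. The principal new feature is that these symbols are now $x$-dependent, so the operators $T_F$, $T_{\Phi}$, etc.\ become genuine pseudodifferential operators in the $x$-variables rather than Fourier multipliers; Lemma \ref{flatops} handles their mapping properties, and their commutators still produce $O(h)$ errors with one degree lost in $\xi$.

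For the small-frequency piece, I would repeat Claims S.1--S.4 almost verbatim, choosing $\mu_i$ and $m_i$ small enough (depending on $K$ and on $g_0$) to ensure the argument of the square root defining $A_{\pm}$ stays bounded away from the branch cut, and choosing $F$ close to $A_+(K,\xi)$ uniformly on the support of the low-frequency cutoff. For the large-frequency piece, I would introduce a frequency cutoff $\zeta$, set $F = G_{+}(K,\xi)$ on the support of $1-\zeta$, and factor $\Lphet$ as the composition of two first-order pseudodifferential operators with symbols $G_{\pm}^{\e}(\b, \xi)$, so that Claims L.1, L.2, and L.4 go through. Summing the two resulting inequalities, noting that error terms of the form $h\|w\|_{L^2(\Omt)} + h^{3/2}\|w\|_{L^2(\Rno)}$ can be absorbed into the left-hand side for sufficiently small $h$, then completes the estimate on $\S$-class functions, and a density argument finishes the proof.

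The main technical obstacle is precisely the $x$-dependence of the symbols coming from $g_0^{ij}(x)$ and from $\b, \g$: every boundary identity and commutator computation in Section 4 implicitly used the Fourier-multiplier structure, whereas here the corresponding operators act genuinely in $x$. The key point to verify is that the principal-symbol cancellations behind Claim S.4 and Claim L.4 --- where, for instance, the boundary trace of $J^{*-1} \Lphet Q w_s$ is computed and matched with that of $J^{*-1} \Lphet w_s$ up to controlled remainders --- survive the introduction of $x$-dependent symbols. Since the commutator of two such pseudodifferential operators gains a factor of $h$ in the interior and of $h^{\half}$ on the boundary via Lemma \ref{H1star}, and the leading-order boundary identities reduce at each point $x$ to the Euclidean calculation with metric $g_0(x)$, these remainders remain absorbable by exactly the schemes used in the linear case. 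No new Carleman estimate is required; the logarithmic dependence of $\ph$ enters only through the original estimate of Proposition \ref{CofVCarl} and the CST-style flattening.
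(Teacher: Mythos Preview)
Your overall strategy---split into small and large frequencies, apply the $Q = J^{-1} + T_\Phi P$ machinery, and run Claims S.1--S.4 and L.1--L.4---is exactly the paper's plan, and the starting $H^1$-to-$L^2$ estimate you cite is Proposition~\ref{CVCSTCarl}. But there is one substantive deviation that creates extra work you have not accounted for.

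You propose to rebuild $J, J^*, J^{-1}, J^{*-1}, P$ from an $x$-dependent symbol $F(x,\xi)$ adapted to the metric $g_0^{ij}(x)$, and then to invoke Lemma~\ref{flatops} for their mapping properties. The paper does \emph{not} do this. It keeps $F(\xi)$ (and hence $J$, $J^{-1}$, $J^{*-1}$, $P$) exactly as in Section~4, i.e.\ as pure Fourier multipliers built from the constant vector $K$. The variable coefficients enter only through the observation that, after the coordinate choice, $|g_0 - I| < \delta$, $|\b - K| < \delta$, $|\g - |K|| < \delta$, so the difference between $\Lphet$ here and the $\Lphet$ of Section~4 is an $O(\delta)$ second-order perturbation which is absorbed exactly as the $C_\delta$ terms were in Claims S.3, S.4, L.2, L.4. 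The only thing redefined with $x$-dependence is the factorization symbol $G^\e_\pm$ (now built from $\a,\b,\g,g_0^{ij}$), and that is a tangential pseudodifferential operator for which Lemma~\ref{flatops} \emph{does} apply.

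The reason this matters: the equivalences $\|J^{-1}u\|_{H^1} \simeq \|u\|_{L^2}$ and $\|J^{*-1}u\|_{H^1} \simeq \|u\|_{L^2}$ in Lemma~\ref{bddness}, as well as the identities $JJ^{-1} = I$ and $J^{-1}J = I$ on functions vanishing at $y=0$, rest on the explicit integral formulas for $J^{-1}$ and $J^{*-1}$, which only make sense when $F$ is independent of $x$. Lemma~\ref{flatops} concerns tangential operators acting slice-by-slice in $y$; it says nothing about inverting $F(x,\xi) + h\partial_y$. If you make $F$ depend on $x$, you would need a separate parametrix construction and new boundedness proofs, none of which are in the paper and none of which are needed. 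So your proof goes through once you drop the $x$-dependence in $F$ and instead treat the metric deviation perturbatively, exactly as in the Euclidean case.
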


By our choice of coordinates for the second transformation, we can arrange that for some arbitrary $\delta > 0$, $|g_0 - I| < \delta$ on $\R^{n+1}$, where $I$ is the identity matrix. Since we have divided up the domain into pieces, we can assume also that there is some constant $K$ such that $|\b - K| < \delta$ and $|\g - |K|| < \delta$.

Now our starting point for this proof is the following proposition, which follows from Theorem 1.2 in ~\cite{CST}, applied to the case of zero-forms, after the changes of variables described above.  Here $\tilde{\Om}_2$ is defined in relation to $\tilde{\Om}$ in analogy to the linear case.  

\begin{prop}\label{CVCSTCarl}
Suppose $w \in H^1(\tilde{\Om}_2)$, and $w$ satisfies \eqref{logtildeBC}.  Then there exists $h_0 > 0$ such that if $0<h<h_0$, then
\begin{equation*}
h^{\half} \|w\|_{H^1(\tilde{\Gamma}_{2+}^c)} + \frac{h}{\sqrt{\e}}\|w\|_{H^1(\tilde{\Om}_2)} \lesssim \| \Lphet w \|_{L^2(\RnIp)}.
\end{equation*}
\end{prop}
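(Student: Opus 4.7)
The plan is to mirror the proof of Proposition \ref{FlatHMinus1} from Section 4, substituting Proposition \ref{CVCSTCarl} for Proposition \ref{CofVCarl} as the starting estimate. The setting is almost identical: $\tilde{\Om}$ sits in $\RnIp$, the piece $\tilde{\Gamma}_+^c$ lies in $\Rno$, and after the preliminary reductions we have $|g_0 - I|, |\b - K|, |\g - |K|| < \delta$ for a small constant $\delta$ and fixed $K \in \Rn$. The operator $\Lphet$ differs from its linear-case analog only in that $\Lap_x$ is replaced by $\L = g_0^{ij}\pd_i \pd_j$, and $\grad f, |\grad f|$ are replaced by $\b, \g$. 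These differences amount to $O(\d)$ perturbations of the same constant-coefficient model operator and boundary condition considered in Section 4.

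Concretely, first split $w = w_s + w_\ell$ using the cutoff $\rho(\xi)$ from Section 4.2, with the small and large frequency regions determined by the same parameters $\mu_j, m_j$ (now also depending on an operator norm coming from Proposition \ref{CVCSTCarl}). Next, redefine the model symbols
\[
A_{\pm}(V,\xi) = \frac{\a + iV \cdot \xi \pm \sqrt{(\a + iV \cdot \xi)^2 - (1 + |V|^2_{g_0})(\a^2 - |\xi|^2_{g_0})}}{1 + |V|^2_{g_0}},
\]
the roots of $(1 + |V|^2_{g_0})X^2 - 2(\a + iV \cdot \xi)X + (\a^2 - |\xi|^2_{g_0})$, using branch choices matched to the small/large frequency cases as before. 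Build $F(\xi)$ from $A_\pm(K,\xi)$ exactly as in Lemmas \ref{smalllemma} and \ref{largelemma}, and define $J, J^*, J^{-1}, J^{*-1}, P, \Phi$ and $Q = J^{-1} + T_\Phi P$ accordingly, with
\[
\Phi = \left(F(\xi) + \frac{1 + i\b \cdot \xi}{1 + |\g|^2}\right)^{-1},
\]
chosen so that $\chi Q w_{s/\ell}$ satisfies \eqref{logtildeBC} up to an $hE_0$ error. Because the algebraic structure of \eqref{logtildeBC} is identical to that of \eqref{tildeBC} (with $\b, \g$ in place of $\grad f, |\grad f|$), the verification that $\chi Q w_s$ satisfies \eqref{logtildeBC} is essentially a copy of the analogous computation in Section 4.2.

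With these definitions, I would prove the direct analogs of Claims S.1--S.4 and L.1--L.4. The arguments transcribe almost verbatim; the only substantive change is that when expanding $\Lphet$ and comparing it to the constant-coefficient factorization $(h\pd_y - T_{A_+(K,\xi)})(h\pd_y - T_{A_-(K,\xi)})$, one acquires extra terms of the form $C_\delta E_2$ coming from $(1+|\g|^2) - (1+|K|^2)$, $\b - K$, and $g_0^{ij} - \d^{ij}$. These are of the same type and size as the $C_\d E_2$ terms already present in Claim S.3 (and its implicit counterpart in the large frequency case) and are absorbed in the same way, using Lemma \ref{H1star} to bound $\|E_2 w_{s/\ell}\|_{H^{1*}(\RnIp)}$ by $\|w_{s/\ell}\|_{H^1(\RnIp)} + h^{\half}\|w_{s/\ell}\|_{H^{\half}(\Rno)}$ and then using the relevant factorization to control $\|w_{s/\ell}\|_{H^1(\RnIp)}$ by $\|\Lphet w_{s/\ell}\|_{H^{1*}(\RnIp)}$. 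The cancellation producing the key line at the end of Claim S.4, where the expression $(\a + \b \cdot h\grad_x - T_F(1+|\g|^2))w_s$ matches the boundary condition, goes through unchanged because it depends only on algebraic identities at $y=0$ and on $F \approx A_+(K,\xi)$, not on the metric in the interior.

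The step I expect to be the main obstacle is verifying the factorization of $\Lphet$ up to acceptable errors; specifically, showing
\[
\Lphet = (1+|\g|^2)(h\pd_y - T_{A_+(K,\xi)})(h\pd_y - T_{A_-(K,\xi)}) + hE_1 + C_\d E_2,
\]
where $E_1, E_2$ are first- and second-order operators in the $x$ variables, respectively. The presence of the variable Riemannian symbol $g_0^{ij}(x,y)\xi_i\xi_j$ inside $h^2 \L$ (rather than a constant symbol $|\xi|^2$) means the difference between the left and right sides contains a nontrivial principal-order piece; the content of the argument is that this piece has symbol $(g_0^{ij} - \d^{ij})\xi_i\xi_j$, which is $O(\d)|\xi|^2$, and so behaves as $C_\d E_2$. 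Lemma \ref{flatops} handles the $y$-dependence of the coefficients. Once this factorization is in hand, summing the small and large frequency estimates and commuting the cutoff $T_\rho$ past $\Lphet$ using Lemma \ref{H1star} (as in the proof of Proposition \ref{FlatHMinus1}) produces the claimed estimate, after absorbing the $h\|w\|_{L^2(\RnIp)} + h\|w\|_{L^2(\Rno)}$ tail terms into the left side for $h$ sufficiently small. A density argument then extends the result from $\S(\RnIp)$-functions supported in $\tilde{\Om}$ to general $w \in H^1(\tilde{\Om})$ satisfying \eqref{logtildeBC}.
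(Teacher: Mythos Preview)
You have misidentified what is to be proved. Proposition \ref{CVCSTCarl} is the \emph{unshifted} Carleman estimate, with $H^1$ norms on the left and an $L^2$ norm on the right; it plays the same role in Section 5 that Proposition \ref{NDCarl} and Proposition \ref{CofVCarl} play in Section 4. Your proposal, however, describes the index-shifting argument (the $Q$-operator, the split $w = w_s + w_\ell$, Claims S.1--S.4 and L.1--L.4), and you even say you will ``substitute Proposition \ref{CVCSTCarl} for Proposition \ref{CofVCarl} as the starting estimate.'' That is a proof of Proposition \ref{LogFlatHMinus1}, not of Proposition \ref{CVCSTCarl}: you are using the statement you are supposed to prove as an input, and the output of your argument has $L^2$ on the left and $H^{1*}$ on the right, which is not what Proposition \ref{CVCSTCarl} asserts.

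In the paper, Proposition \ref{CVCSTCarl} is not proved by any of the machinery of Sections 3--4; it is obtained from Theorem 1.2 of \cite{CST} (the Carleman estimate for the Hodge Laplacian, specialized to zero-forms) by applying the two changes of variables described just before the proposition: first $(x,y) \mapsto (x, y - f(x))$ to flatten $\Gamma_+^c$, then the choice of local coordinates on $M_0$ making $g_0$ close to the identity. This is the direct analog of how Proposition \ref{CofVCarl} is obtained from Proposition \ref{NDCarl} in Section 4. Your write-up would be essentially correct if relabeled as a proof of Proposition \ref{LogFlatHMinus1}, and in fact it matches the paper's treatment of Lemmas \ref{logsmalllemma} and \ref{loglargelemma} quite closely; but as a proof of Proposition \ref{CVCSTCarl} it is circular.
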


We are now almost in the same situation as in Section 4, when we had to prove Proposition \ref{FlatHMinus1} followed from Proposition \ref{CofVCarl}, and we will see that most of the proof from Section 4 goes through unchanged.  The main difference is that the second order derivatives in the $x$ variable no longer have constant coefficients.  On the other hand, the coefficients are nearly constant in the sense that $|g_0 - I| < \delta$.

We will define $m_1$, $m_2$, $\mu_1$, $\mu_2$, $\rho$, $w_s$, and $w_{\ell}$ as in Section 4.  Then we need to prove the following lemmas.

\begin{lemma}\label{logsmalllemma}
Suppose $w \in \S(\RnIp)$ has support inside $\tilde{\Om}$ and satisfies the boundary conditions \eqref{logtildeBC}.  Then for appropriate choices of $\delta$, $\mu_1, \mu_2, m_1,$ and $m_2$, 
\begin{equation*}
h^{\half} \|w_{s}\|_{L^2(\Rno)} + \frac{h}{\sqrt{\e}}\|w_{s}\|_{L^2(\RnIp)} \lesssim \| \Lphet w_{s} \|_{H^{1*}(\RnIp)} + h\|w\|_{L^2(\RnIp)} + h\|w\|_{L^2(\Rno)}.
\end{equation*}
\end{lemma}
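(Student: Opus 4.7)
The plan is to mirror the proof of Lemma \ref{smalllemma}, treating the variable coefficient operator $h^2\L = h^2 g_0^{ij}\partial_i\partial_j$ as a small perturbation of $h^2\Lap_x$. The inputs $|g_0 - I| < \delta$, $|\b - K| < \delta$, $|\g - |K|| < \delta$, and $|\a - 1| < \delta$ all produce perturbation terms of the form $\delta E_2$, with $E_2$ a uniformly bounded second order operator, so each such error carries a $\delta$ prefactor and can be absorbed after fixing $\delta$ small in terms of the constant in Proposition \ref{CVCSTCarl}. I would reuse the entire operator-theoretic setup verbatim: choose $A_\pm$, $\mu_j$, $m_j$, $\rho$, and the cutoff $\chi$ as in Lemma \ref{smalllemma}, set $F(\xi) = G_{+}(K,\xi)$, and define $J$, $J^{-1}$, $P$, $T_{\Phi}$, and $Q = J^{-1} + T_{\Phi} P$ accordingly. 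The boundary condition \eqref{logtildeBC} has the same algebraic form as \eqref{tildeBC}, so $\chi Q w_s$ still satisfies the hypothesis of Proposition \ref{CVCSTCarl}.

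I would then run the same four-claim ladder. Claim S.1, the inequality
\[
h^{\half}\|w_s\|_{L^2(\Rno)} + \tfrac{h}{\sqrt{\e}}\|w_s\|_{L^2(\RnIp)} \lesssim \|\Lphet Q w_s\|_{L^2(\RnIp)} + R,
\]
with $R = h\|w\|_{L^2(\RnIp)} + h^{\frac{3}{2}}\|w\|_{L^2(\Rno)}$, follows by applying Proposition \ref{CVCSTCarl} to $\chi Q w_s$ and using the commutator properties of $J$, $P$, $T_{\Phi}$ exactly as in the linear case. Claim S.2, which converts $\|\Lphet Q w_s\|_{L^2(\RnIp)}$ to $\|\Lphet w_s\|_{H^{1*}(\RnIp)}$ up to a $h^{\half}\|J^{*-1}\Lphet Q w_s\|_{H^{\half}(\Rno)}$ boundary term, depends only on Lemma \ref{H1star} and general order-counting for $J$, $J^{*}$ against a second order semiclassical differential operator, and so transfers with no change.

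For Claim S.3 I would write
\[
\Lphet = (1+|K|^2)h^2\partial_y^2 - 2(1 + K \cdot h\grad_x)h\partial_y + 1 + h^2\Lap_x + C_{\delta} E_2,
\]
with $E_2$ a uniformly bounded second order operator capturing all of the $\delta$-perturbations, including those from the metric. The constant coefficient principal part factors through $(h\partial_y - T_{A_-(K,\xi)})(h\partial_y - T_{A_+(K,\xi)})$ as before. Since $\hat{w}_s$ is supported in $|\xi| < \mu_2$, Lemma \ref{H1star} gives $\|E_2 w_s\|_{H^{1*}(\RnIp)} \lesssim \|w_s\|_{H^1(\RnIp)} + h^{\half}\|w_s\|_{H^{-\half}(\Rno)}$, and the $C_\delta$ prefactor makes this term absorbable into the left side for $\delta$ small, yielding $\|w_s\|_{H^1(\RnIp)} \lesssim \|\Lphet w_s\|_{H^{1*}(\RnIp)} + h^{\half} C_{\delta}\|w_s\|_{L^2(\Rno)}$.

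Claim S.4 is the main obstacle and the most delicate step. I would compute $\widehat{J^{*-1}\Lphet w_s}|_{y=0}$ and $\widehat{J^{*-1}\Lphet Q w_s}|_{y=0}$ by the same integration-by-parts identity as in the linear case, substitute the boundary condition \eqref{logtildeBC}, and track errors. The new feature is that $g_0^{ij}\xi_i\xi_j = |\xi|^2 + (g_0^{ij} - \delta^{ij})\xi_i\xi_j$ introduces additional bulk integrals of the form $h^{-1}\int_0^\infty \delta\,\widehat{E_2 w_s}\,e^{-F(\xi)t/h}\,dt$; these merge into the existing $\delta + \delta_2$ error term on $\|J^{*-1}\Lphet Q w_s\|_{H^{\half}(\Rno)}$, and Claim S.3 controls the resulting $L^2(\RnIp)$ norm. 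Combining the four claims and absorbing the boundary term as in Lemma \ref{smalllemma} then finishes the proof. The bookkeeping obstacle is verifying that every extra term from the non-flat geometry factors as a $\delta$-small prefactor times an operator of order at most two that Lemma \ref{H1star} together with the small-frequency localization of $w_s$ can absorb; beyond this, every step is a direct transcription from the linear case.
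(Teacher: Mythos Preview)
Your proposal is correct and follows essentially the same approach as the paper, which simply observes that Lemma~\ref{logsmalllemma} is proved exactly as Lemma~\ref{smalllemma} because the variable-coefficient $\Lphet$ differs from the linear-case operator by a second order operator with $O(\delta)$ coefficients, and such a perturbation is absorbed by the existing $C_\delta E_2$ error terms in Claims S.1--S.4. One small slip: in the small-frequency setup $F$ should be taken as in Lemma~\ref{smalllemma} (a smooth approximant to $A_+(K,\xi)$ on $\operatorname{supp}\rho$), not $G_+(K,\xi)$, which is the large-frequency choice from Lemma~\ref{largelemma}; this does not affect the rest of your argument.
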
 

\begin{lemma}\label{loglargelemma}
Suppose $w \in \S(\RnIp)$ has support inside $\tilde{\Om}$ and satisfies the boundary conditions \eqref{logtildeBC}.  Then for $\delta$ small enough,
\begin{equation*}
h^{\half} \|w_{\ell}\|_{L^2(\Rno)} + \frac{h}{\sqrt{\e}}\|w_{\ell}\|_{L^2(\RnIp)} \lesssim \| \Lphet w_{\ell} \|_{H^{1*}(\RnIp)}+h\|w\|_{L^2(\RnIp)}+ h\|w\|_{L^2(\Rno)}.
\end{equation*}
\end{lemma}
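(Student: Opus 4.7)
The plan is to mirror the proof of Lemma \ref{largelemma} as closely as possible, with the metric $g_0$ on $M_0$ replacing the flat Euclidean metric in the $x$-variable. Because the coordinates were chosen so that $|g_0 - I| < \d$, $|\b - K| < \d$, and $|\g - |K|| < \d$ for some fixed $K \in \Rn$ and arbitrarily small $\d$, the presence of $g_0$ contributes only terms which can be absorbed in the same way the $\g$ and $\a$ variations were handled in Section 4. First, I would redefine
\[
A^{\e}_{\pm}(V, \xi, x) = \frac{\a + iV \cdot \xi \pm \sqrt{(\a + iV\cdot \xi)^2 - (1+|V|^2)(\a^2 - g_0^{ij}(x)\xi_i\xi_j)}}{1 + |V|^2},
\]
and analogously $A_{\pm}(V,\xi,x)$ (with $\a=1$), taking the branch of the square root with nonnegative real part. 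Then $\Lphet$ factors (modulo $O(h)$ error) as $(1+|\g|^2)(h\partial_y - T_{A^{\e}_{+}(\b,\xi,x)})(h\partial_y - T_{A^{\e}_{-}(\b,\xi,x)})$. Since $|g_0 - I| < \d$, the branch cut of $A_{\pm}(K,\xi,x)$ lies within $O(\d)$ of the flat-case cut, so the cutoff $\z \in \Czinf(\Rn)$ from Section 4 can still be chosen so that $G_{\pm}(V,\xi,x) = (1-\z)A_{\pm}(V,\xi,x) + \z$ and $G^{\e}_{\pm}(V,\xi,x) = (1-\z)A^{\e}_{\pm}(V,\xi,x) + \z$ are smooth first-order symbols. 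Take $F(\xi) = G_{+}(K,\xi,x)|_{g_0 = I}$, which is $x$-independent and satisfies \eqref{FBehaviour} and \eqref{FSymbol}, and then define $J$, $J^*$, $J^{-1}$, $J^{*-1}$, $P$, $\Phi$, and $Qw_{\ell} = (J^{-1} + T_{\Phi}P)w_{\ell}$ as in Section 3.

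Next I would reproduce Claims L.1--L.4 of Lemma \ref{largelemma} in this setting. Claim L.1 follows verbatim from Proposition \ref{CVCSTCarl} applied to $\chi Q w_{\ell}$, after verifying that $\chi Q w_{\ell}$ satisfies the boundary condition \eqref{logtildeBC} (the verification is identical, since at $y=0$ the relation $h\partial_y \chi Qw_\ell = T_{\Phi^{-1}}(\chi Q w_\ell) + hE_0 w_\ell$ matches the boundary condition thanks to our choice of $\Phi$). For Claim L.2, the factorization of $\Lphet$ in terms of $G^{\e}_{\pm}(\b,\xi,x)$ introduces additional error terms arising from (i) the non-commutativity of $T_{A^{\e}_{+}}$ and $T_{A^{\e}_{-}}$ coming from the $x$-dependence of $g_0^{ij}$, (ii) the difference between $T_{G^{\e}_{-}(\b,\xi,x)}$ and $T_{F}$ on the support of $\hat{w}_{\ell}$, which is $O(\d)$ times a first-order operator, and (iii) the usual $O(h)$ remainders from symbol composition. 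By Lemma \ref{flatops} and Lemma \ref{H1star}, each such contribution is bounded either by $R = h\|w\|_{L^2(\RnIp)} + h^{\frac{3}{2}}\|w\|_{L^2(\Rno)}$ or by $C_{\d}$ times $\|Qw_{\ell}\|_{H^1}$, and can be absorbed using the analog of \eqref{ClaimS3}, namely
\[
\|(h\partial_y - T_{G^{\e}_{-}(\b,\xi,x)})Qw_{\ell}\|_{H^1(\RnIp)} \gtrsim \|\Lphet Qw_{\ell}\|_{L^2(\RnIp)} - R,
\]
which remains valid because the two factors of the (approximate) factorization are uniformly bounded below in the relevant frequency region. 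Claim L.4 then proceeds as in Section 4, using that the boundary condition \eqref{logtildeBC} has the same algebraic structure as \eqref{tildeBC} with $(1+|\g|^2)$ playing the role of $(1+|\grad f|^2)$ and $\b$ playing the role of $\grad f$. Combining Claims L.1--L.4 and absorbing the extraneous terms (selecting $\d$ small enough that the $C_{\d}$ factors do not swamp the Carleman constants) completes the proof.

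The main obstacle is the bookkeeping in the factorization step of Claim L.2: because $g_0^{ij}(x)$ is no longer constant, the identity $T_{A^{\e}_{+}}T_{A^{\e}_{-}} = T_{A^{\e}_{+} A^{\e}_{-}}$ now holds only up to a first-order operator in $x$ times $h$. One must therefore apply the Kohn--Nirenberg composition carefully and verify that every remainder is order $\leq 1$ in $x$ uniformly in $y$, so that Lemmas \ref{flatops} and \ref{H1star} apply and the remainders are controlled by $\|Qw_{\ell}\|_{H^1(\RnIp)} + h^{\half}\|Qw_{\ell}\|_{H^{\half}(\Rno)}$, which are in turn controlled by $R$ via \eqref{QH1Bound} and \eqref{QHHalfRnoBound}. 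Once this bookkeeping is complete, the rest of the argument, including the absorption of error terms into the left-hand side and the selection of a sufficiently small $\d$, is identical to that of Lemma \ref{largelemma}.
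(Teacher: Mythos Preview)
Your proposal is correct and follows essentially the same approach as the paper. The paper's proof is in fact even more terse than yours: it redefines $A^{\e}_{\pm}$ to incorporate $\b$, $\g$, and $g_0^{ij}$ exactly as you do, observes that the resulting $G^{\e}_{\pm}$ are smooth first-order symbols differing from $A_{\pm}(K,\xi)$ by $O(\delta)(1+|\xi|)$ on the support of $\hat{w}_\ell$, and then simply says ``the remainder of the proof of Lemma~\ref{largelemma} carries over''; your additional remarks about the $x$-dependence of $g_0^{ij}$ and the resulting $O(h)$ composition remainders make explicit what the paper leaves implicit.
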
 

As in the linear case, the proof of Proposition \ref{LogFlatHMinus1} will follow from these.  Lemma \ref{logsmalllemma} can be proved in exactly the same manner as Lemma \ref{smalllemma}, since a perturbation of $\Lphet$ by a second order operator with $O(\delta)$ coefficients does not change the proof.  Another way to see this is that in equation \eqref{SubCarl}, we can replace $\Lap_x$ by $\L$ at the cost of adding a $C_{\delta}\|E_2\chi Q w_s\|_{L^2(\RnIp)}$ term to the right hand side.  By the arguments given in the proof, a term of this kind can be absorbed into the left hand side.

However, the proof of Lemma \ref{loglargelemma} requires a change in the definition of $G^{\e}_{\pm}$.

\begin{proof}[Proof of Lemma \ref{loglargelemma}]

We begin by defining $A_{\pm}, \zeta, G_{\pm}, J, \Phi,$ and $Q$ as in the proof of Lemma \ref{largelemma}.  As before, $\chi Q w_{\ell}$ now satisfies \eqref{logtildeBC}, so 
\[
h^{\half} \|\chi Qw_{\ell}\|_{H^1(\tilde{\Gamma}_{2+}^c)} + \frac{h}{\sqrt{\e}}\|\chi Qw_{\ell}\|_{H^1(\tilde{\Om}_2)} \lesssim \| \Lphet \chi Qw_{\ell} \|_{L^2(\tilde{\Om}_2)}.
\]  
and by the arguments for Claim L.1,
\[ 
h^{\half}\| w_{\ell}\|_{L^2(\Rno)}+\frac{h}{\sqrt{\e}}\|w_{\ell}\|_{L^2(\RnIp)} \lesssim \| \Lphet Q w_{\ell} \|_{L^2(\RnIp)}+h\|w\|_{L^2(\RnIp)}+ h^{\frac{3}{2}}\|w\|_{L^2(\Rno)},
\]

Now we will define $G^{\e}_{\pm}$ by 
\[
G^{\e}_{\pm}(\xi) = (1-\zeta)A^{\e}_{\pm}(\xi) + \zeta,
\]
where
\[
A^{\e}_{\pm}(\xi) = \frac{\a + i\b \cdot \xi \pm \sqrt{(\a+i\b \cdot \xi)^2 - (1+|\g|^2)(\a^2-\sum g_0^{ij}\xi_i\xi_j)}}{1 + |\g|^2}.
\]

Then on the support of $w_{\ell}$, $G^{\e}_{\pm}(\xi) = A^{\e}_{\pm}(\xi)$, and $A^{\e}_{\pm}(\xi)$ are the roots of the polynomial
\[
(1 + |\g|^2)X^2 - 2(\a+ i\b \cdot \xi)X + (\a^2 - \sum g_0^{ij}\xi_i\xi_j).
\]
As in the linear case, $G^{e}_{\pm}$ are smooth and symbols of order one. Therefore on the support of $w_{\ell}$ we can factor $\Lphet$ as
\[
(h\partial_y - T_{G^{\e}_{+}(\xi)})(1+|\g|^2)(h\partial_y - T_{G^{\e}_{-}(\xi)})
\]
up to first order error.  Moreover, on the support of $w_{\ell}$, $G^{\e}_{\pm}(\xi) = A^{\e}_{\pm}(\xi)$ is equal to $A_{\pm}(K,\xi)$ up to $O(\d)(1 + |\xi|)$, just like in the linear case, because of the condition that $|g_0 - I| \leq \delta$.  Therefore using these $G^{\e}_{\pm}$, the remainder of the proof of Lemma \ref{largelemma} carries over to the proof of Lemma \ref{loglargelemma}. 

\end{proof}

Thus Proposition \ref{LogFlatHMinus1} and Theorem \ref{ManifoldCarl} follow, and then by a change of variables, we obtain Theorem \ref{MainCarl}.

Note that if $\ph$ is a limiting Carleman weight, then $-\ph$ is a limiting Carleman weight as well.  Replacing $\ph$ with $-\ph$ switches the roles of $\Gamma_{+}$ and $\Gamma_{-}$, so Theorem \ref{MainCarl} yields the following corollary.
\begin{cor}\label{ReverseCarl}
Suppose $w \in H^1(\Om)$, and  
\begin{equation}\label{reverseBC}
\begin{split}
w, \partial_{\nu} w &= 0 \mbox{ on } \Gamma_{-} \\
h\partial_{\nu} w + (\partial_{\nu}\ph)w &= h\s w \mbox{ on } \Gamma_{-}^c. \\
\end{split}
\end{equation}
for some zero order operator $\s$ with uniform bounds in $h$.  Then
\begin{equation*}
h^{\half} \|w\|_{L^2(\Gamma_{-}^c)} + h\|w\|_{L^2(\Om)} \lesssim \| \L_{A,q,-\ph} w \|_{H^{1*}(\Om)}
\end{equation*}
\end{cor}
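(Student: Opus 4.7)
The plan is to obtain Corollary \ref{ReverseCarl} as a direct consequence of Theorem \ref{MainCarl} by exploiting the reflection symmetry $\ph \mapsto -\ph$. The key observation is that the class of weights allowed in Theorem \ref{MainCarl}, namely $\ph(x) = \pm \log|x - p|$, is closed under negation, so $-\ph$ is again admissible. Thus I would simply apply Theorem \ref{MainCarl} with $\tilde{\ph} := -\ph$ replacing $\ph$, and check that the hypotheses and conclusion translate into exactly the statement of Corollary \ref{ReverseCarl}.

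First I would verify the identification of the boundary sets: since $\partial_{\nu}\tilde{\ph} = -\partial_{\nu}\ph$, the set $\partial \Om_{+}$ defined relative to $\tilde{\ph}$ coincides with $\partial \Om_{-}$ defined relative to $\ph$. Consequently, a neighbourhood of $\partial \Om_{+}$ for $\tilde{\ph}$ is precisely a neighbourhood of $\partial \Om_{-}$ for $\ph$, i.e.\ $\Gamma_{-}$. Next I would check the boundary condition. The condition \eqref{logBC} applied to $\tilde{\ph}$ reads
\begin{equation*}
\begin{split}
w,\,\partial_{\nu} w &= 0 \mbox{ on } \Gamma_{-}, \\
h\partial_{\nu} w &= (\partial_{\nu}\tilde{\ph}) w + h\s w = -(\partial_{\nu}\ph) w + h\s w \mbox{ on } \Gamma_{-}^c,
\end{split}
\end{equation*}
which, after moving the $(\partial_{\nu}\ph) w$ term to the left side, is exactly \eqref{reverseBC}.

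Finally I would track the conjugated operator. By its definition,
\[
\L_{A,q,\tilde{\ph}} = h^2 e^{\tilde{\ph}/h} \L_{A,q} e^{-\tilde{\ph}/h} = h^2 e^{-\ph/h} \L_{A,q} e^{\ph/h} = \L_{A,q,-\ph},
\]
so the right-hand side of \eqref{theCarl} applied with $\tilde{\ph}$ becomes $\|\L_{A,q,-\ph} w\|_{H^{1*}(\Om)}$, matching the claimed estimate. Substituting these identifications into the conclusion of Theorem \ref{MainCarl} yields Corollary \ref{ReverseCarl} immediately.

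There is essentially no technical obstacle here; the only point requiring care is to ensure that $-\ph$ really falls within the hypotheses of Theorem \ref{MainCarl}. This is immediate from the form $\ph = \pm \log|x-p|$ in Theorem \ref{MainThm} (and from the fact that limiting Carleman weights come in $\pm$ pairs more generally, as noted just before the statement of the corollary). Thus the proof is a one-line application of Theorem \ref{MainCarl} after the substitution $\ph \rightsquigarrow -\ph$.
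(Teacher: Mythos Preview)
Your proposal is correct and matches the paper's own argument essentially verbatim: the paper simply notes that $-\ph$ is again an admissible weight, that replacing $\ph$ by $-\ph$ swaps $\Gamma_{+}$ and $\Gamma_{-}$, and then invokes Theorem~\ref{MainCarl}. Your additional verification that the boundary condition \eqref{logBC} for $-\ph$ becomes \eqref{reverseBC} and that $\L_{A,q,\tilde\ph}=\L_{A,q,-\ph}$ is exactly the unwinding needed, and there is nothing more to add.
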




\section{Complex Geometrical Optics Solutions}

This section will be devoted to the proof of Proposition \ref{CGOs}.  First we need a solvability lemma proved by Hahn-Banach.

\begin{lemma}\label{HBsolns}
For every $v \in L^{2}(\Om)$ and $f \in L^2(\partial \Om)$, there exists $u \in H^1(\Om)$ such that 
\begin{eqnarray*}
\Lphaq  u &=& v  \mbox{ on } \Om\\
(\nu\cdot h(\grad - iA) - \partial_{\nu} \ph)u|_{\Gamma_{-}^c} &=& f 
\end{eqnarray*}
and
\[
\|u\|_{H^1(\Om)} \lesssim h^{-1}\|v\|_{L^2(\RnI)} + h^{\half}\|f\|_{L^2(\partial \Om)}.
\] 
\end{lemma}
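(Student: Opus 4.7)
The plan is a standard Hahn-Banach duality argument keyed to Corollary \ref{ReverseCarl}.  Let $\Lphaq^{*}$ denote the formal $L^{2}$-adjoint of $\Lphaq$; a direct computation gives $\Lphaq^{*} = \L_{A,\overline{q},-\ph}$ (up to conjugation of $A$ if $A$ is complex), so Corollary \ref{ReverseCarl} applies to $\Lphaq^{*}\psi$ for test functions $\psi$ satisfying \eqref{reverseBC}.  The first step is a Green's identity for the second-order operator $\Lphaq$,
\[
(\Lphaq u, \psi)_{\Om} = (u, \Lphaq^{*}\psi)_{\Om} + h\int_{\pd\Om} \mathcal{B}(u,\psi)\, dS,
\]
with $\mathcal{B}$ bilinear in the boundary traces of $u, h\partial_{\nu}u, \psi, h\partial_{\nu}\psi$.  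For $\psi$ satisfying \eqref{reverseBC}, the conditions $\psi = \partial_{\nu}\psi = 0$ on $\Gamma_{-}$ and $h\partial_{\nu}\psi + (\partial_{\nu}\ph)\psi = h\s\psi$ on $\Gamma_{-}^{c}$ collapse $\mathcal{B}$ to an integral of $\bigl[(\nu\cdot h(\grad - iA) - \partial_{\nu}\ph)u\bigr]\overline{\psi}$ over $\Gamma_{-}^{c}$, modulo $O(h)$ terms absorbable into the $\s$ perturbation.

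Define the subspace
\[
W = \{\Lphaq^{*}\psi : \psi \in C^{\infty}(\overline{\Om})\text{ satisfies }\eqref{reverseBC}\} \subset H^{1*}(\Om),
\]
and the linear functional $L : W \to \C$ by
\[
L(\Lphaq^{*}\psi) = \int_{\Om} v\overline{\psi}\, dV + h\int_{\Gamma_{-}^{c}} f\overline{\psi}\, dS.
\]
By Corollary \ref{ReverseCarl}, $\Lphaq^{*}\psi = 0$ in $H^{1*}(\Om)$ forces $\psi \equiv 0$, so $L$ is well-defined on $W$.  Cauchy-Schwarz combined with the Carleman estimate
\[
h\|\psi\|_{L^{2}(\Om)} + h^{\half}\|\psi\|_{L^{2}(\Gamma_{-}^{c})} \lesssim \|\Lphaq^{*}\psi\|_{H^{1*}(\Om)}
\]
then yields
\[
|L(\Lphaq^{*}\psi)| \leq \|v\|_{L^{2}(\Om)}\|\psi\|_{L^{2}(\Om)} + h\|f\|_{L^{2}(\Gamma_{-}^{c})}\|\psi\|_{L^{2}(\Gamma_{-}^{c})} \lesssim \bigl(h^{-1}\|v\|_{L^{2}} + h^{\half}\|f\|_{L^{2}}\bigr)\|\Lphaq^{*}\psi\|_{H^{1*}(\Om)},
\]
so $L$ is bounded on $W$ with the desired constant.

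Hahn-Banach extends $L$ to a continuous linear functional on all of $H^{1*}(\Om)$ of the same operator norm, and reflexivity of $H^{1}(\Om)$ represents this extension as pairing with a unique $u \in H^{1}(\Om)$ with $\|u\|_{H^{1}(\Om)} \lesssim h^{-1}\|v\|_{L^{2}(\Om)} + h^{\half}\|f\|_{L^{2}(\pd\Om)}$.  Testing the identity $L(\Lphaq^{*}\psi) = \langle u, \Lphaq^{*}\psi\rangle$ against $\psi \in \Czinf(\Om)$ immediately yields $\Lphaq u = v$ in the sense of distributions on $\Om$; since $\Lphaq u \in L^{2}(\Om)$, a weak trace theorem makes sense of $(\nu\cdot h(\grad - iA) - \partial_{\nu}\ph)u$ as an element of $H^{-\half}(\pd\Om)$.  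Testing $L$ against general $\psi$ satisfying \eqref{reverseBC} and comparing with the Green's identity then recovers the boundary condition $(\nu\cdot h(\grad - iA) - \partial_{\nu}\ph)u|_{\Gamma_{-}^{c}} = f$.

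The main technical subtlety will be this final step: identifying the abstract boundary pairing produced by Hahn-Banach with the strong boundary condition claimed in the lemma.  This is essentially routine once $\Lphaq u \in L^{2}(\Om)$ is known, but it requires care in reconciling the weak trace interpretation with the form of $L$, and in checking that the $O(h)$ correction terms absorbed into $\s$ at the Green's identity stage do not obstruct recovery of $f$ on $\Gamma_{-}^{c}$.
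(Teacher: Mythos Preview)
Your proposal is correct and follows essentially the same Hahn--Banach duality argument as the paper's proof: define the functional on the range of $\Lphaq^{*}=\L_{\overline{A},\overline{q},-\ph}$ acting on test functions satisfying \eqref{reverseBC}, bound it via Corollary~\ref{ReverseCarl}, extend by Hahn--Banach to $H^{1*}(\Om)$, represent by $u\in H^{1}(\Om)$, and then recover the PDE and boundary condition by testing. The paper carries out the Green's identity only after obtaining $u$, and makes the specific choice $\s=i\nu\cdot A$ in \eqref{reverseBC} to collapse the boundary terms exactly, which is the concrete version of your remark about absorbing $O(h)$ corrections into $\s$.
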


\begin{proof}
We follow the methods in, for example, ~\cite{KSU}, but using the Carleman estimate from Corollary \ref{ReverseCarl}.  Let $v \in L^2(\Om)$ and $f \in L^2(\partial \Om)$.  Suppose $w \in H^1(\Om)$ satisfies \eqref{reverseBC}, and consider the expression $(w,v)_{\Om} + (w,hf)_{\partial \Om}.$  We have
\begin{equation*}
\begin{split}
|(w,v)_{\Om} + (w,hf)_{\partial \Om}| &\leq h\|w\|_{L^2(\Om)}h^{-1}\|v\|_{L^2(\Om)} + h^{\half}\|w\|_{L^2(\Gamma_{-}^c)}h^{\half}\|f\|_{L^2(\partial \Om)} \\
                                     &\lesssim \| \L_{\overline{A}, \overline{q},-\ph}w \|_{L^2(\Om)}(h^{-1}\|v\|_{L^2(\Om)}+h^{\half}\|f\|_{L^2(\partial \Om)}), \\
\end{split}
\end{equation*}
with the second inequality being a consequence of Corollary \ref{ReverseCarl}.
Now consider the subspace
\[
\{ \L_{\overline{A}, \overline{q},-\ph}w | w \in H^1(\Om) \mbox{ and } w \mbox{ satisfies \eqref{reverseBC} } \} \subset H^{1*}(\Om).
\]
By Corollary \ref{ReverseCarl}, the linear functional $\L_{\overline{A}, \overline{q},-\ph}w \mapsto (w,v)_{\Om} + (w,hf)_{\partial \Om}$ is well defined on this space.  Then the above estimate shows that it is bounded by $C(h^{-1}\|v\|_{L^2(\Om)}+h^{\half}\|f\|_{L^2(\partial \Om)})$.  Therefore by Hahn-Banach, there is an extension of the functional to $H^{1*}(\Om)$ with the same bound.  Thus there exists $u \in H^1(\Om)$ such that 
\[
\|u\|_{H^1(\Om)} \lesssim h^{-1}\|v\|_{L^2(\Om)}+h^{\half}\|f\|_{L^2(\partial \Om)},
\]
and
\[
(w,v)_{\Om} + (w,hf)_{\partial \Om} = (\L_{\overline{A}, \overline{q},-\ph}w, u).
\]
Integrating by parts on the right side,
\begin{eqnarray*}
(w,v)_{\Om} + (w,hf)_{\partial \Om} &=& (w, \Lphaq u)_{\Om} -h(h\partial_{\nu} w, u)_{\partial \Om} + h(w, h \partial_{\nu} u)_{\partial \Om}\\
                                    & &  - 2h(w, \partial_{\nu} \ph u)_{\partial \Om} - 2h^2(w, i \nu \cdot A u)_{\partial \Om}.\\
\end{eqnarray*}
This holds for all $w \in H^1(\Om)$ which satisfy \eqref{reverseBC}, so in particular it holds for all $w \in C^{\infty}_0(\Om)$.  This means that
\[
\Lphaq u = v
\]
on $\Om$.  Thus
\[
(w,hf)_{\partial \Om} = -h(h\partial_{\nu} w, u)_{\partial \Om} + h(w, (h \partial_{\nu} - 2\partial_{\nu} \ph - 2ih\nu\cdot A) u)_{\partial \Om}.
\]
Using the boundary conditions \eqref{reverseBC}, with $\s = i\nu\cdot A$, we get
\[
(w,hf)_{\Gamma_{-}^c} = h(w, (h \partial_{\nu} - ih\nu \cdot A - \partial_{\nu} \ph) u)_{\Gamma_{-}^c}.
\]
for all $w \in H^2(\Om)$ which satisfy \eqref{reverseBC}.  Therefore 
\[
(h \partial_{\nu}- ih\nu \cdot A - \partial_{\nu} \ph) u|_{\Gamma_{-}^c} = f.
\]
\end{proof}

Now we can construct the CGO solutions from Proposition \ref{CGOs}.  

\begin{proof}[Proof of Proposition \ref{CGOs}]

If $\psi(x,y)$ solves the eikonal equations
\[
\grad \ph \cdot \grad \psi = 0,  |\grad \ph| = |\grad \psi|,
\]
and $a$ is a solution to the Cauchy-Riemann equation
\[
(-\grad \ph + i\grad \psi) \cdot (\grad + iA) a + (\grad + iA) \cdot (-\grad \ph + i\grad \psi) a  = 0,
\]
as in ~\cite{DKSjU}, then
\[
h^2(\Laq)e^{\frac{1}{h}(-\ph + i\psi)}a = O(h^2)e^{\frac{-\ph}{h}}.
\]
Therefore
\[
e^{\frac{\ph}{h}}h^2(\Laq)e^{\frac{1}{h}(-\ph + i\psi)}a = v,
\]
for some $v$ with $\|v\|_{L^2(\Om)} = O(h^2)$.  Moreover, at $\partial \Om$,
\[
e^{\frac{\ph}{h}} \nu \cdot h(\grad + iA) e^{\frac{1}{h}(-\ph + i\psi)}a = g,
\]
where $\|g\|_{L^2(\partial \Om)} = O(1)$.  
Now by Lemma \ref{HBsolns}, there exists a solution $r_0 \in H^1(\Om)$ to the problem
\begin{eqnarray*}
                                                               \Lphaq r_0 &=& -v \\
(h \partial_{\nu} - ih\nu\cdot A - \partial_{\nu} \ph)r_0|_{\Gamma_{-}^c} &=& -g, \\
\end{eqnarray*}
and $\|r_0\|_{H^1(\Om)} = O(h^{\half})$.  Then if $r = e^{-\frac{i\psi}{h}}r_0$, then $\|r\|_{H^1(\Om)} = O(h^{\half})$ and
\begin{eqnarray*}
\Laq e^{\frac{1}{h}(-\ph + i\psi)}(a + r) &=& 0 \\
\nu \cdot h(\grad + iA) e^{\frac{1}{h}(-\ph + i\psi)}(a + r)|_{\Gamma_{-}^c} &=& 0. 
\end{eqnarray*}

This completes the proof.  

\end{proof}

\bibliographystyle{alpha}

\end{document}